\documentclass[10pt,a4paper]{article}

\usepackage[pdftex]{graphicx}
\usepackage{amsmath}
\usepackage{amssymb}
\usepackage{amsthm}
\usepackage{amsfonts}
\usepackage{wrapfig}
\usepackage{calc}
\usepackage{tabulary}
\usepackage[plain]{fullpage}
\usepackage{xcolor}
\usepackage{tikz}
\usetikzlibrary{snakes}

\newcommand{\aut}{\textnormal{Aut}}

\newcommand{\disj}{\textnormal{disj}}
\newcommand{\supp}{\textnormal{supp}}

\newcommand{\cfpo}[1] {\mathrm{CFPO}_{#1}}
\newcommand{\alt}[1] {\mathrm{Alt}_{#1}}

\newcommand{\comm}{\mathrm{Comm}}
\newcommand{\conj}{\mathrm{Comm}}
\newcommand{\indec}{\mathrm{Indec}}

\makeatletter
\providecommand*{\cupdot}{%
  \mathbin{%
    \mathpalette\@cupdot{}%
  }%
}
\newcommand*{\@cupdot}[2]{%
  \ooalign{%
    $\m@th#1\cup$\cr
    \sbox0{$#1\cup$}%
    \dimen@=\ht0 %
    \sbox0{$\m@th#1\cdot$}%
    \advance\dimen@ by -\ht0 %
    \dimen@=.5\dimen@
    \hidewidth\raise\dimen@\box0\hidewidth
  }%
}

\providecommand*{\bigcupdot}{%
  \mathop{%
    \vphantom{\bigcup}%
    \mathpalette\@bigcupdot{}%
  }%
}
\newcommand*{\@bigcupdot}[2]{%
  \ooalign{%
    $\m@th#1\bigcup$\cr
    \sbox0{$#1\bigcup$}%
    \dimen@=\ht0 %
    \advance\dimen@ by -\dp0 %
    \sbox0{\scalebox{2}{$\m@th#1\cdot$}}%
    \advance\dimen@ by -\ht0 %
    \dimen@=.5\dimen@
    \hidewidth\raise\dimen@\box0\hidewidth
  }%
}
\makeatother

\newtheorem{lemma}{Lemma}[section]
\newtheorem{theorem}[lemma]{Theorem}
\newtheorem{cor}[lemma]{Corollary}

\newtheorem{dfn}[lemma]{Definition}
\newtheorem{prop}[lemma]{Proposition}

\includeonly{
Treelike,
Cone,
Wreath,
    }

\title{The Reconstruction of Cycle-free Partial Orders from their Abstract Automorphism Groups II : Cone Transitive CFPOs}

\author{Robert Barham \\ Institut f\"ur Algebra, TU Dresden \\ robert.barham@yahoo.co.uk}

\begin{document}

\maketitle

\thanks{The author has received funding from the European Research Council under the European Community's Seventh Framework Programme (FP7/2007-2013 Grant Agreement no. 257039).}

\abstract{In this triple of papers, we examine when two cycle-free partial orders can share an abstract automorphism group.  This question was posed by M. Rubin in his memoir concerning the reconstruction of trees.

In this middle paper, we adapt a method used by Shelah in \cite{ShelahPermutation} and \cite{ShelahPermutationErrata}, and by Shelah and Truss in \cite{ShelahTrussQuotients} to define a cone transitive CFPO inside its automorphism group using the language of group theory.}

\section{Introduction}\label{intro2}

This paper draws on the methods employed in \cite{ShelahTrussQuotients}, which is about reconstructing the quotients of symmetric groups as permutations groups from the quotients of symmetric groups as abstract groups.  This paper uses $A_5$, the alternating group on five elements, chosen because it's the smallest non-abelian simple group, to represent the set being permuted.  This paper also uses $A_5$ to represent the CFPO.

We take the abstract automorphism group of a cone transitive CFPO and define the original CFPO.  Section \ref{conetrans} is devoted to properly defining the CFPOs where we apply this method.  Section \ref{section:reconstructionofconetrans} produces a long chain of first order formulae, starting with the 60-ary formula that states `these automorphisms form a subgroup isomorphic to $A_5$, the alternating group on five elements'.  There then follows a series of formulas with the goals of: defining subgroups whose support is exactly some of the extended cones of a single point; and expressing when two of these subgroups have disjoint support.  These two goals are, by far, the hardest part of this paper.  Afterwards, we have the relatively simple task of representing the points of the CFPO with these subgroups, and recovering the betweenness relation.

The final section examines how we can recover the order from the betweenness relation.  In some circumstances, the order relation is first-order definable from the betweenness relation, but not always, and certainly not with the same formula in all circumstances.  To over come this, we end this paper by giving an $L_{\omega_1, \omega}$-formula that always defines the order.

\section{Cone Transitive}\label{conetrans}

\begin{dfn}
Let $M$ be a CFPO and let $x, y, z \in M$  Such that $z < x < y$.  The \textbf{upwards cone} of $x$ that contains $y$ is the set $\lbrace t \in M \: : \: x \not\in \path{t,y} \rbrace$.  The \textbf{downwards cone} of $x$ that contains $z$ is the set $\lbrace t \in M \: : \: x \not\in \path{t,z} \rbrace$.
\end{dfn}

\begin{dfn}
We say that CFPO $M$ is \textbf{1-transitive} if for all $x,y \in M$ there exists a $\varphi \in \aut(M)$ such that $\varphi(x)=y$.
\end{dfn}

\begin{dfn}
We say that a CFPO $M$ is \textbf{cone transitive} if it is 1-transitive and if $C$ and $D$ are both upwards cones or both downwards cones of $x \in M$ then there exists a $\varphi \in \aut(M)$ such that $\varphi(C) = D$.
\end{dfn}

Since every element may be sent to any other, if $M$ is 1-transitive then $M$ is monochromatic.  $M$ is cone transitive implies that $M$ is one-transitive, so all cone transitive CFPOs are monochromatic.

Cone transitive CFPOs are the arena for an interpretation inspired by Shelah and Truss' work.  Unfortunately, there are very few Rubin complete cone transitive CFPOs.  However, these methods still work when we do not have Rubin completeness, so for this chapter we drop the assumption that $M$ is Rubin complete.

We require one additional assumption before we begin our interpretation.

\begin{dfn}
The \textbf{upwards ramification order} of $x$ in $M$, written as $Ro\uparrow (x)$, is the number of upwards cones of $x$.

The \textbf{downwards ramification order} of $x$ in $M$, written as $Ro\downarrow (x)$, is the number of downwards cones of $x$.
\end{dfn}

\begin{prop}
If $M$ is 1-transitive then for all $x$ and $y$
$$Ro \uparrow (x) = Ro \uparrow(y) \;\textnormal{ and }\; Ro \downarrow (x) = Ro \downarrow(y)$$
\end{prop}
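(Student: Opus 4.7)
The plan is to use 1-transitivity to produce an automorphism carrying $x$ to $y$ and then to argue that such an automorphism induces a bijection between the upwards cones of $x$ and the upwards cones of $y$ (and likewise for downwards cones).

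First I would fix $x, y \in M$ and, by 1-transitivity, choose $\varphi \in \aut(M)$ with $\varphi(x) = y$. The key observation is that $\varphi$, being an order-automorphism, preserves the betweenness relation and hence the path operator $\path{\cdot,\cdot}$; that is, $\varphi(\path{s,t}) = \path{\varphi(s),\varphi(t)}$ for all $s,t$. Consequently, for any $t, u \in M$, the condition $x \in \path{t,u}$ is equivalent to $\varphi(x) \in \path{\varphi(t),\varphi(u)}$.

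Next I would verify that $\varphi$ maps upwards cones of $x$ to upwards cones of $y$. Given an upwards cone $C$ of $x$ containing some $u > x$, the image $\varphi(u)$ satisfies $\varphi(u) > \varphi(x) = y$ (since $\varphi$ is order-preserving), and by the displayed equivalence
$$\varphi(C) = \{\varphi(t) : x \notin \path{t,u}\} = \{s \in M : y \notin \path{s,\varphi(u)}\},$$
which is precisely the upwards cone of $y$ containing $\varphi(u)$. Since $\varphi$ is a bijection on $M$, distinct upwards cones of $x$ map to distinct upwards cones of $y$; and applying the same reasoning to $\varphi^{-1}$ shows surjectivity onto the set of upwards cones of $y$. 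Hence $\varphi$ induces a bijection between the two sets of upwards cones, proving $Ro\uparrow(x) = Ro\uparrow(y)$. The argument for $Ro\downarrow$ is identical, using that $\varphi$ also preserves $>$.

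There is no serious obstacle here; the only point requiring a moment of care is checking that "upwards" is preserved rather than being flipped to "downwards", which is immediate from the fact that $\aut(M)$ consists of order-automorphisms (not merely automorphisms of the betweenness reduct).
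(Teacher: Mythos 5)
Your proof is correct and takes essentially the same approach as the paper's one-line argument (an automorphism mapping $x$ to $y$ carries upwards/downwards cones of $x$ bijectively to those of $y$), just spelled out in more detail.
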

\begin{proof}
Any automorphism that maps $x$ to $y$ also maps the cones above $x$ to the cones above $y$.  The same is true for the cones below.
\end{proof}

\begin{dfn}
Let $M$ be 1-transitive.  The \textbf{upwards} (resp. \textbf{downwards}) \textbf{ramification order} of $M$, written as $ro\uparrow (M)$ (resp. $ro \downarrow (M)$), is equal to $Ro \uparrow (x)$ (resp. $Ro \downarrow (x)$) for some $x$.
\end{dfn}

To get a sufficiently rich automorphism group we must also assume that both $ro \uparrow (M)$ and $ro \downarrow (M)$ are at least 5.

\begin{dfn}
Let $K_{Cone}$ be the class of cone transitive CFPOs such that
$$5 \leq ro \uparrow (M) \leq ro \downarrow (M)$$
for all $M \in K_{Cone}$.
\end{dfn}

\section{Reconstructing Betweenness in $K_{Cone}$}\label{section:reconstructionofconetrans}

All the CFPOs we are handling are from $K_{Cone}$, so are path complete, cone transitive, both $ro \uparrow (M)$ and $ro \downarrow (M)$ are greater than 4, and $ro \uparrow (M) \leq ro \downarrow (M)$.

We are now ready to give the interpretation of $M$ inside $\aut(M)$.  The interpretation uses pairs of subgroups isomorphic to $A_5$, the alternating group on five elements, to represent the points of the CFPO.  $A_5$ is chosen because it is the smallest non-abelian finite simple group.

\begin{dfn}\label{masterdfn}
Let $\bar{f}$, $\bar{f}_0$, $\bar{f}_1$, $\bar{g}$, $\bar{g}_0$ and $\bar{g}_1$ be 60-tuples from $\aut(M)$.
\begin{enumerate}
\item For all $\phi \in \aut(M)$, if $\phi$ preserves $X$ set-wise then $\phi|^X$, the \textbf{restriction} of $\phi$ to $X$, is the map obtained by taking the union of the standard restriction, which is a partial automorphism, and the restriction of the identity to $M \setminus X$.  Symbolically $\phi|^X := \phi|_X \cup id|_{M\setminus X}$.
This is only a total automorphism in certain circumstances which crop up often in this chapter.
\item $\bar{f}(x) := \lbrace y \in M \: : \: \exists f \in \bar{f} \: f(x)=y \rbrace$
\item $A_{5}(\bar{f})$ is the formula that states ``$\bar{f}$ satisfies the elementary diagram of $A_5$''.  This is the conjunction of formulas of the form $f_i f_j =f_k$ and $f_i f_j \not= f_k$.
\item $\conj(\bar{f},\bar{g})$ is the formula $$\alt{5}(\bar{f}) \wedge \alt{5}(\bar{g}) \wedge \bigwedge_{\substack{f_i \in \bar{f} \\ g_j \in \bar{g}}} (f_i g_j = g_j f_i)$$
\item if $\bar{f}$ and $\bar{g}$ satisfy $\alt{5}$ and $\phi \in \aut(M)$ is any automorphism then
$$
\begin{array}{r c l c r c l}
\bar{f}*\bar{g} &:=&(f_i g_i) &\quad\quad & \bar{f}^\phi & : = &(\phi f_i \phi^{-1}) \\
\phi * \bar{f} & := & (\phi f_i) & \quad\quad & \bar{f} * \phi & := &(f_i \phi)
\end{array}
$$
\item $\indec(\bar{f})$ is the formula $$ \neg \exists \bar{g}, \bar{h} (\bar{g}* \bar{h}= \bar{f} \wedge \comm(\bar{g},\bar{h}))$$
\item $\mathrm{Disj}(\bar{f},\bar{g})$ is the formula $$ \indec(\bar{f}) \wedge \indec(\bar{g}) \wedge \conj(\bar{f},\bar{g})$$
\item $[\supp(\bar{f}) \sqsubseteq \supp(\bar{g})]$ is the formula
$$
\begin{array}{l c}
\indec(\bar{f}) \wedge \indec(\bar{g}) \wedge \neg \disj(\bar{f},\bar{g}) & \wedge \\
\neg \exists \phi [\neg \disj( \bar{f}^\phi,\bar{f}) \wedge \disj( \bar{g}^\phi , \bar{g} )] & \wedge \\
\neg \exists \phi (\bar{f}^\phi = \bar{f} \wedge \bar{g}^\phi \not= \bar{g}) & \wedge \\
\end{array}
$$
\item $[\supp(\bar{g}) \sqsubset \supp(\bar{f})]$ is the formula
$$[\supp(\bar{g}) \sqsubseteq \supp(\bar{f})] \wedge \neg [\supp(\bar{f}) \sqsubseteq \supp(\bar{g})]$$
\item $\mathrm{SamePD}(\bar{f},\bar{g})$ (Same Point and Direction) is the formula
$$\forall \bar{h} ( [\supp(\bar{h}) \sqsubset \supp(\bar{f})] \leftrightarrow [\supp(\bar{h}) \sqsubset \supp(\bar{g})])$$
\item $\mathrm{RepPoint}(\bar{f}_0,\bar{f}_1)$ is the formula
$$\disj(\bar{f}_0,\bar{f}_1) \wedge \forall \bar{g} \exists \bar{h} \left( \neg \disj(\bar{g},\bar{h}) \wedge \left(
\begin{array}{r}
\mathrm{SamePD}(\bar{f_0},\bar{h}) \\
\mathrm{SamePD}(\bar{f_1},\bar{h})
\end{array} \vee\right) \right)
$$
\item $\mathrm{EqRepPoint}(\bar{f}_0,\bar{f}_1;\bar{g}_0,\bar{g}_1)$ is the formula
$$
\begin{array}{c}
\mathrm{RepPoint}(\bar{f}_0,\bar{f}_1) \wedge \mathrm{RepPoint}(\bar{g}_0,\bar{g}_1) \wedge \\
(\mathrm{SamePD}(\bar{f}_0,\bar{g}_0) \wedge \mathrm{SamePD}(\bar{f}_1,\bar{g}_1)) \vee (\mathrm{SamePD}(\bar{f}_0,\bar{g}_1) \wedge \mathrm{SamePD}(\bar{f}_1,\bar{g}_0))
\end{array}
$$
\end{enumerate}
\end{dfn}

\subsection{The Domain of the Interpretation}

\begin{lemma}\label{Lemma:A5Behaves}
If $\aut(M) \models \alt{5}(\bar{g})$ holds then $\bar{g}$ fixes at least one point.
\end{lemma}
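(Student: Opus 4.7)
The plan is to treat $G := \langle \bar{g}\rangle \cong \alt{5}$ as a finite subgroup of $\aut(M)$ of order~$60$ acting on $M$, and to mimic the classical Serre-style argument that a finite group acting on a tree has a fixed vertex. To begin, I would pick any $x \in M$ and form its orbit $O := Gx$, which has size at most~$60$. If $|O| = 1$ we are already done, so the interesting case is $|O| \geq 2$.

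Next I would form the ``Steiner hull'' $S := \bigcup_{y,z \in O} \path{y,z}$ and extract from it a finite combinatorial tree $T$: its vertex set $V$ consists of the points of $O$ together with the branch points of $S$ (points where three or more locally disjoint segments of $S$ meet), and its edges are the maximal path-segments of $S \setminus V$. Standard CFPO bookkeeping should give $|V| \leq 2|O|-1$, so $T$ is a finite tree, and it is $G$-invariant because $G$ permutes $O$ and preserves the betweenness relation encoded by $\path{\cdot,\cdot}$.

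With the finite tree $T$ in hand I would run the classical centre-finding argument, iteratively pruning leaves until a centre $C$ remains; this $C$ is either a single vertex of $T$ or a single edge of $T$, and is automatically $G$-invariant. If $C = \{v\}$ then $G$ fixes the point $v \in M$. If $C = \{u,v\}$ is an edge, then the action of $G$ on $\{u,v\}$ gives a homomorphism $G \to \mathrm{Sym}(\{u,v\}) \cong \mathbb{Z}/2\mathbb{Z}$; because $\alt{5}$ is simple and has no subgroup of index~$2$ this homomorphism must be trivial, so $G$ fixes both $u$ and $v$. Either way we obtain a point of $M$ fixed by every element of $\bar{g}$.

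The main obstacle will be the construction of $T$. Since $M$ is not assumed to be Rubin complete in this chapter, I will need to give a careful definition of ``branch point'' of the Steiner hull and verify directly that only finitely many branch points can arise from the finite set $O$, so that $T$ really is a finite combinatorial object on which $G$ acts. Once this is done, the centre argument together with the simplicity of $\alt{5}$ finishes the proof without further effort.
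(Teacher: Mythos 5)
Your proposal follows the same overall strategy as the paper (build a finite tree-like structure from an orbit, then locate a fixed vertex), but fills in the details by a different route. The paper forms $S := \bigcup_{x_i,x_j\in\bar g(x)}\path{x_i,x_j}^-$, which is a \emph{finite sub-CFPO of $M$} because the connecting sets $\path{x_i,x_j}^-$ are finite, and then invokes a result from Part~1 of the series: for any finite CFPO $S$ there is a rooted tree $T$ with $\aut(S)\cong_P\aut(T)$, so the root corresponds to a point of $S\subseteq M$ that every element of $\bar g$ fixes. You instead extract a combinatorial Steiner tree and apply the classical centre argument, disposing of the edge-centre case by observing that $A_5$ has no subgroup of index~$2$. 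That last trick is a genuine alternative to what the paper does (and is correct), but it is tied to the choice of $A_5$, whereas the paper's cited lemma works for any finite group; in fact, the order relation already forbids a nontrivial automorphism swapping two adjacent vertices, so the edge-centre case is vacuous in the CFPO setting.

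There is, however, a more serious issue than the finiteness question you flag. Your tree $T$ has as vertices the orbit together with the branch points of the Steiner hull, and your conclusion requires the centre $v$ (or both endpoints of the centre edge) to be a \emph{point of $M$}. In a CFPO, the place where two paths $\path{a,b}$ and $\path{a,c}$ diverge need not be realised in $M$ --- it may be an ideal cut. If the centre of your tree happens to be such a branch point, you have produced a $\bar g$-fixed cut, not a $\bar g$-fixed element of $M$, and the argument does not close. The paper avoids this by working entirely inside $S\subseteq M$: it never needs the branch points, only the (finitely many) local-extremum points of the paths, which are actual elements of $M$. To repair your proof you would either need to justify, using the path-completeness assumption in force for $K_{Cone}$, that the branch points of the Steiner hull lie in $M$, or replace the Steiner hull with the union of connecting sets as in the paper's proof.
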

\begin{proof}
Every transitive action of $\alt{5}$ is isomorphic to its action on a coset space $[ \alt{5} : H ]$ for some $H \leq A_{5}$.  The subgroups of $A_{5}$ can have orders 1, 2, 3, 4, 5, 6, 10, 12 and 60 and hence the possible values for $|\bar{g}(x)|$ are 60, 30, 20, 15, 12, 10, 6, 5 and 1.

Every element of $\bar{g}$ has finite order so for all $x$ we know that $\bar{g}(x)$ is an antichain.  Pick one $x$ such that $|\bar{g}(x)| \not= 1$ (possible, as $A_5$ is not the identity), so there are $g_i$ that act non-trivially.

Let
$$S:=\bigcup\limits_{x_i,x_j \in \bar{g}(x)} \path{x_i,x_j}^-$$
Since each $\path{x_i,x_j}^-$ is finite and $\bar{g}(x)$ is finite, $S$ is also finite, and therefore must be a $\cfpo{n}$ for some $n$.  In the previous paper we showed that there was a tree $T$ such that $\aut(S) \cong_P \aut(T)$.  The root of $T$ is fixed by every automorphism of $S$, and hence by every element of $\bar{g}$.
\end{proof}

\begin{lemma}
If $\supp(\bar{f})$ and $\supp(\bar{g})$ are disjoint then $\aut(M) \models \conj(\bar{f},\bar{g})$.
\end{lemma}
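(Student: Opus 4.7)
The plan is to unpack $\conj(\bar{f},\bar{g})$ and observe that the hypothesis ``supports are disjoint'' implicitly presupposes that $\supp(\bar f)$ and $\supp(\bar g)$ are well-formed objects, i.e.\ that $\bar f$ and $\bar g$ already satisfy $\alt{5}(\cdot)$ (otherwise we would be taking the support of an arbitrary 60-tuple, and the $\conj$ formula contains the conjuncts $\alt 5(\bar f)$ and $\alt 5(\bar g)$ anyway as part of its statement). With that reading, the only content left to verify is the big conjunction of commutators: for each $f_i \in \bar f$ and $g_j \in \bar g$, the automorphisms $f_i$ and $g_j$ commute.

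First I would record the elementary fact that for any $\phi \in \aut(M)$, the support $\supp(\phi)$ is $\phi$-invariant: if $\phi(x) \neq x$ but $\phi(\phi(x)) = \phi(x)$, then applying $\phi^{-1}$ on the left gives $\phi(x) = x$, a contradiction, so $\phi(x)\in\supp(\phi)$ whenever $x\in\supp(\phi)$, and the same argument with $\phi^{-1}$ shows $\supp(\phi)$ is $\phi^{-1}$-invariant too. Applied to each coordinate, this gives $f_i(\supp(\bar f)) = \supp(\bar f)$ and $g_j(\supp(\bar g)) = \supp(\bar g)$.

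Next, fix $f = f_i$ and $g = g_j$ and show $fg(x) = gf(x)$ for every $x\in M$ by splitting into three cases according to whether $x$ is in $\supp(\bar f)$, in $\supp(\bar g)$, or in neither (the fourth case is excluded by the disjointness hypothesis). If $x\notin \supp(\bar f)\cup \supp(\bar g)$ both compositions fix $x$. If $x\in\supp(\bar f)$ then $f(x)\in\supp(\bar f)$ by the invariance step, so both $x$ and $f(x)$ lie outside $\supp(\bar g)$; therefore $g$ fixes both and $gf(x) = f(x) = fg(x)$. The case $x\in\supp(\bar g)$ is symmetric. This establishes $fg = gf$ for every choice of $i,j$, which is exactly the conjunction $\bigwedge_{i,j}(f_i g_j = g_j f_i)$.

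There is no real obstacle here; the only point requiring care is the invariance of $\supp(\phi)$ under $\phi$, which is what lets the disjointness hypothesis propagate from $x$ to $f(x)$ in the casework. Once that is in hand, the argument is a routine three-line verification and the lemma follows.
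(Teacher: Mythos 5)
Your argument is correct, and in fact the paper states this lemma without any proof at all — it is treated as the standard fact that permutations with disjoint supports commute, so there is no proof in the paper to compare against. Your case analysis is exactly the expected argument.

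One small point of care in your write-up: the step ``Applied to each coordinate, this gives $f_i(\supp(\bar f)) = \supp(\bar f)$'' doesn't quite follow from the single-automorphism invariance, because $\supp(\bar f) = \bigcup_k \supp(f_k)$ and the single-$\phi$ fact only tells you $f_i(\supp(f_i)) = \supp(f_i)$, not what $f_i$ does to $\supp(f_k)$ for $k \neq i$. To get full invariance of $\supp(\bar f)$ under each $f_i$ you need the group structure: since $\bar f$ realizes the diagram of $A_5$ it is closed under conjugation, so $f_i(\supp(f_k)) = \supp(f_i f_k f_i^{-1}) \subseteq \supp(\bar f)$. Alternatively, and more economically, the commutation argument only needs the weaker statement that $x \notin \supp(\bar g)$ implies $f_i(x) \notin \supp(\bar g)$: if $f_i(x) = x$ this is trivial, and if $f_i(x) \neq x$ then $x \in \supp(f_i)$, so by the single-$\phi$ invariance $f_i(x) \in \supp(f_i) \subseteq \supp(\bar f)$, which is disjoint from $\supp(\bar g)$. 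Either repair keeps your three-case verification intact.
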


\begin{dfn}\label{dfn:ECC}
Let $\bar{f}$ be such that $\aut(M) \models A_5(\bar{f})$ and $C_i$ are the connected components of $\supp(\bar{f})$.  We say that $E \subseteq M$ is an \textit{extended connected component} of $\supp(\bar{f})$ if:
\begin{enumerate}
\item $E$ contains a union of $C_i$ and at most one element from $M \setminus \supp(\bar{f})$, which we call $e$;
\item if $C \subseteq E$ then $\bar{f}(C) \subseteq E$;
\item if $e$ exists then $E$ contains at least two connected components, $C_0$ and $C_1$, and $\lbrace e \rbrace = \path{C_0,C_1}$; and
\item if $D$ satisfies conditions 1-3 and $E \cap D \not=\emptyset$ then $E \subseteq D$.
\end{enumerate}
\end{dfn}

\begin{lemma}\label{lemma:ECC}
If $X$ is an extended connected component of some $\supp(\bar{f})$ then $\path{X,M \setminus X}$ is a singleton.
\end{lemma}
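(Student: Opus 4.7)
The plan is to exploit the structural description of an extended connected component in Definition \ref{dfn:ECC} together with the cycle-freeness of $M$. First I would use the conditions of that definition --- particularly the final ``closure'' condition --- to narrow down the possible shapes of $X$: either $X$ is a single connected component $C$ of $\supp(\bar{f})$, or $X$ is a union of two (or more) components $C_0, C_1, \ldots$ of $\supp(\bar{f})$ together with a bridging point $e \in M \setminus \supp(\bar{f})$ which lies on every chain between any two of those components (since $\{e\} = \path{C_i,C_j}$). Lemma \ref{Lemma:A5Behaves} then furnishes a fixed point of $\bar{f}$ lying in $M \setminus \supp(\bar{f}) \subseteq M \setminus X$, guaranteeing that chains between $X$ and $M \setminus X$ exist, so that $\path{X, M \setminus X}$ is non-empty.

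For the singleton claim I would argue by contradiction: suppose two distinct points $p \neq q$ both lie on every chain from a point of $X$ to a point of $M \setminus X$. Since chains in a CFPO are linearly ordered, $p$ and $q$ would be comparable on each such chain, so one can speak of the ``inner'' and ``outer'' of the two. By varying the endpoints --- using the internal connectedness of the individual components $C_i$, and, when present, the bridging point $e$ that glues several $C_i$'s into one extended object --- I would produce two distinct chains between $X$ and $M \setminus X$ which, when combined with an internal chain of $X$, force a cycle in $M$, contradicting cycle-freeness.

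The main obstacle is the case where $X$ is a single connected component $C$ with no bridging point: that $C$ has a unique ``gate'' to $M \setminus C$ does not follow from cycle-freeness alone, since a generic connected subset of a CFPO can exit in multiple directions. Instead one must use the $\bar{f}$-invariance of $\supp(\bar{f})$ and the fixed point provided by Lemma \ref{Lemma:A5Behaves}, which (via the argument that passed through the finite $\cfpo{n}$ in that proof) identifies this fixed point as a canonical apex of $C$ that every exit chain must traverse. The bridging case is then comparatively easy, since the role of the unique separator is played either by $e$ itself or by the canonical apex supplied on the ``other side'' of the components.
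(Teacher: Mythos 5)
Your proposal correctly identifies that cycle-freeness alone is not enough and that something extra is needed for the single-component case, but the resolution you sketch does not match what is actually required, and two concrete ingredients of the paper's argument are missing.

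First, the paper does not derive the ``gate'' structure from scratch: it cites Lemma 3.10 of Part 1, which says that since $X$ is preserved setwise by $\bar{f}$ (via Condition 2 of Definition~\ref{dfn:ECC}), the set $\path{X, M\setminus X}$ already equals $\path{x,y}$ for some pair $x,y$, and these must be fixed by $\bar{f}$, hence lie in $M\setminus X$. Your attempt to replace this by a ``two distinct points $p,q$ on every chain force a cycle'' argument does not work as stated: two points lying on $\path{X,M\setminus X}$ simply lie on the same minimal path, and no second chain (hence no cycle) is produced. What has to be shown is that the gate path degenerates to a single point, and this is where the second missing ingredient appears.

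Second, the collapse of $\path{x,y}$ to $\{x\}$ in the paper is achieved by the cone argument, which hinges on the minimality clause (Condition 4) of Definition~\ref{dfn:ECC}: if both an upward cone $U$ and a downward cone $D$ of $x$ met $X$, then $\bar{f}(U)\cap X$ would satisfy Conditions 1--3 while not containing all of $X$, contradicting Condition 4. Once $X$ is confined to, say, the upward cones of $x$, two points in distinct downward cones of $x$ force $\path{x,y}=\{x\}$. Your suggestion of using Lemma~\ref{Lemma:A5Behaves} to identify a ``canonical apex'' that exit chains traverse does not replace this step: that lemma produces a fixed point for a single finite orbit (the root of the associated finite tree), and there is no argument given that this point coincides with the gate $x$ of the extended connected component or that it enjoys the required separation property. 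In short, the decisive use of the minimality condition in Definition~\ref{dfn:ECC} is absent from your proposal, and without it the singleton claim does not follow.
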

\begin{proof}
Condition 2 of Definition \ref{dfn:ECC} shows that $X$ is preserved setwise by $\bar{f}$, so by Lemma 3.10 of Part 1.  There are $x$ and $y$ such that $\path{X, M \setminus X}= \path{x,y}$.  Both $x$ and $y$ are fixed by $\bar{f}$, so $x,y \in M \setminus X$.

Suppose one of the cones above $x$ intersects $X$ and one of the cones below $x$ intersects $X$.  Let $U$ be the upwards extended cone and let $D$ be the downwards extended cone.  $\bar{f}(U) \cap \bar{f}(D) = \emptyset$, as $\bar{f}$ fixes $x$, so $\bar{f}(U) \cap X$ satisfies Conditions 1-3, and does not contain $X$ giving a contradiction.

Therefore we may assume that $X$ is contained in extended cones above $x$.  Let $y_0$ and $y_1$ lie in different extended cones below $x$.  The definition of extended cone guarantees that $\path{x,y_0} \cap \path{x,y_1} = \lbrace x \rbrace$, so $\path{x, y} = \lbrace x \rbrace$.
\end{proof}

\begin{lemma}\label{RestrictionSubgroups}
Let $\bar{f}$ satisfy $\alt{5}$.  If we partition $\supp(\bar{f})$ into two collections of extended connected components, which we will call $X$ and $Y$, then $(f_i |^X)$ and $(f_i |^Y)$ satisfy $\alt{5}$.
\end{lemma}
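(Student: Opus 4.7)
The plan is to verify two things: (i) each $f_i|^X$ is a genuine automorphism of $M$ and not just a partial one; and (ii) the tuple $(f_i|^X)$ satisfies the same elementary diagram that $\bar{f}$ satisfies, so that in particular $\alt{5}((f_i|^X))$ holds. For (i), I would first use condition 2 of Definition \ref{dfn:ECC} to observe that $\bar{f}$ preserves each ECC setwise, so $f_i$ preserves the union $X$ setwise; hence $f_i|_X$ is an order automorphism of $X$ as a suborder, and the only remaining issue is to check that gluing it with $\identity|_{M\setminus X}$ preserves the order across the boundary. Lemma \ref{lemma:ECC} supplies exactly what is needed: for each ECC $E \subseteq X$ there is a single gateway point $e_E \in M\setminus E$ through which every path from $E$ to its complement must pass, and the proof of that lemma shows $e_E$ is fixed by $\bar{f}$. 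For $a \in E$ and $b \in M\setminus X$, the factorisation of $\path{a,b}$ through $e_E$ gives $a \leq b$ iff $a \leq e_E \leq b$; since $f_i$ fixes $e_E$ and is already an order automorphism of $M$, this biconditional is preserved on replacing $a$ by $f_i(a)$, and the $\ge$ and incomparable cases are analogous.

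For (ii), I would first record the elementary identity $(f_i|^X)(f_j|^X) = (f_i f_j)|^X$, which follows immediately from the invariance of $X$ under $\bar{f}$ together with the fact that both sides act as identity off $X$. Every equation $f_i f_j = f_k$ appearing in the elementary diagram of $A_5$ therefore lifts to the restrictions verbatim. The subtler direction is the inequalities: if $f_i f_j \ne f_k$ in $\aut(M)$, a priori they might agree on $X$ and differ only on $Y$, which would force $(f_i f_j)|^X = f_k|^X$ and break the diagram. Here I would invoke the simplicity of $A_5$. The map $A_5 \to \mathrm{Sym}(X)$, $i \mapsto f_i|_X$, is a group homomorphism, so its kernel is a normal subgroup of $A_5$ and hence either trivial or everything; it cannot be everything because $X$ contains at least one connected component of $\supp(\bar{f})$, on which some $f_j$ acts nontrivially. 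Injectivity then delivers every required inequality.

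The main obstacle is this inequality step. The order-automorphism check in (i) is routine path-tracking once the gateway supplied by Lemma \ref{lemma:ECC} is in hand, and the equalities of (ii) are immediate from invariance; the genuine risk is that the $A_5$-action on $\supp(\bar{f})$ degenerates when restricted to $X$. Simplicity of $A_5$ eliminates this worry in one line, which is essentially why $A_5$ was chosen to code the points of $M$ in this interpretation.
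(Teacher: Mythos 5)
Your proof is correct and follows essentially the same route as the paper: check that $X$ is $\bar{f}$-invariant so the restrictions are automorphisms, observe that positive clauses of the diagram transfer by multiplicativity of restriction, and invoke simplicity of $A_5$ to ensure the negative clauses also transfer. Your phrasing of the last step is a touch more direct (the restriction map $f_i \mapsto f_i|^X$ is an injective homomorphism) than the paper's, which first derives $f_k|^X = f_l|^X$ for distinct $f_k, f_l$ and then concludes the kernel is all of $A_5$, but the underlying idea is identical.
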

\begin{proof}
First of all, we must show that this lemma makes sense, i.e. $\bar{f}$ preserves the extended connected components of $\supp(\bar{f})$ set-wise and therefore $f_i|^X$ and $f_i|^Y$ are automorphisms.

Since the supports of $(f_i |^X)$ and $(f_i |^Y)$ are disjoint, $\conj((f_i |^X),(f_i |^Y))$ holds.  We consider the positive statements of the formula $A_5$ that $\bar{f}$ satisfies, which are of the form $f_i f_j = f_k$.

Since $f_i = f_i |^X f_i |^Y$ for all $i$ we can deduce that $$f_i |^X f_j |^X f_i |^Y f_j |^Y = f_k |^X f_k |^Y$$ and since $(f_\alpha |^X f_\alpha |^Y )|^X=f_\alpha |^X$ we conclude that $(f_i |^X)$ and $(f_i |^Y)$ satisfy all the positive statements of $\alt{5}$.  We now consider the negative statements, those of the form $f_i f_j \not= f_k$.

Repeating the argument for the positive statements allows us to deduce $$f_i |^X f_j |^X f_i |^Y f_j |^Y \not= f_k |^X f_k |^Y$$ which only guarantees that at least one of $f_i |^X f_j |^X \not= f_k |^X$ or $f_i |^Y f_j |^Y \not= f_k |^Y$.  Without loss of generality we assume that $f_i |^Y f_j |^Y \not= f_k |^Y$.  In $A_5$ there is the positive statement $f_i f_j = f_l$ for some $f_l \not=f_k$, so if $f_i |^X f_j |^X  = f_k |^X$, then $f_k |^X=f_l |^X$.

We define the homomorphism
$$\Phi : \left\lbrace 
\begin{array}{r c l}
\bar{f} & \rightarrow & (f_i |^X) \\
f_i & \mapsto & f_i|^{X}
\end{array}
\right.$$
$\Phi^{-1}(id)$ is a normal subgroup of $\bar{f}$. We have just found distinct $f_k$ and $f_l$ such that $f_k |^X = f_l |^X$, so since $A_5$ is simple, this means that $f_i|^X=id$ for all $f_i \in \bar{f}$, contradicting the fact that $X \cap \supp(\bar{f}) \not= \emptyset$.

Therefore if $A_5(\bar{f})$ then $A_5((f_i |^X))$ and $A_5((f_i |^Y))$.
\end{proof}

\begin{lemma}\label{lemma:nocancellingorbits}
If $\bar{g} * \bar{h} = \bar{f}$ and $\comm(\bar{g},\bar{h})$ then $\supp(\bar{g}),\supp(\bar{h}) \subseteq \supp(\bar{f})$.
\end{lemma}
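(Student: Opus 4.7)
The plan is to prove $\supp(\bar g) \subseteq \supp(\bar f)$; the same argument with $\bar g$ and $\bar h$ exchanged (legitimate because $\bar g * \bar h = \bar h * \bar g$ thanks to elementwise commutation) handles $\bar h$. So fix $x \notin \supp(\bar f)$, meaning $f_i(x) = x$ for every $i$, and aim to show $x \notin \supp(\bar g)$. From $f_i = g_i h_i$ I extract the key identity
$$h_i(x) = g_i^{-1}(x) \qquad \text{for all } i.$$

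Put $G = \langle \bar g \rangle$; since $\alt{5}(\bar g)$ presents $G$ as a nontrivial homomorphic image of the simple group $\alt 5$, we get $G \cong \alt 5$. Let $O = G \cdot x$. Elementwise commutation of $\bar h$ with $\bar g$ together with $h_i(x) \in O$ shows each $h_i$ preserves $O$ setwise, via $h_i(g(x)) = g h_i(x) = g g_i^{-1}(x) \in O$. The induced action of $\langle \bar h \rangle$ on $O \cong G/G_x$ commutes with the $G$-action, hence factors through the group of $G$-equivariant self-maps of $G/G_x$, which is $N_G(G_x)/G_x$ acting by right translation of cosets. By simplicity of $\langle \bar h \rangle \cong \alt 5$, the induced homomorphism $\langle \bar h \rangle \to N_G(G_x)/G_x$ is either trivial or injective.

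If it is trivial, then $h_i(x) = x$ for every $i$, and the key identity immediately yields $g_i(x) = x$ for every $i$, so $x \notin \supp(\bar g)$ as required. The injective case must therefore be ruled out, which is the real obstacle. Injectivity forces $60 \leq |N_G(G_x)/G_x| \leq |G| = 60$, hence $G_x = 1$; identifying $O$ with $G$ under left multiplication, each $h_i$ acts as right multiplication by $g_i^{-1}$. The law $h_i h_j = h_k$ from $\alt{5}(\bar h)$ translates under this identification to $g_i^{-1} g_j^{-1} = g_k^{-1}$, i.e.\ $g_j g_i = g_k$, while $\alt{5}(\bar g)$ already supplies $g_i g_j = g_k$ with the same index $k$; therefore $g_i g_j = g_j g_i$ for all $i, j$. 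Because $G_x = 1$ makes the $G$-action on $O$ faithful, this commutativity propagates to $G$ itself, contradicting $G \cong \alt 5$. The decisive idea is that the free $G$-torsor structure forces $\langle \bar h \rangle$ to act by right translations, whereupon the two commuting $\alt 5$-structures clash through the order of multiplication in $G$.
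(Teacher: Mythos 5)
Your reduction to the case $G_x=1$ is clean and correct as far as it goes — sharper, in fact, than the argument in the paper — but the concluding step does not produce the contradiction you claim. You observe that on $O$, identified with $G$ via $g\leftrightarrow g(x)$, each $h_i$ acts by $g\mapsto g\,g_i^{-1}$. Now compute $h_ih_j$ on $O$: $h_ih_j(g)=h_i(g\,g_j^{-1})=g\,g_j^{-1}g_i^{-1}=g\,(g_ig_j)^{-1}$. The assignment $u\mapsto(g\mapsto g\,u^{-1})$ from $G$ into $\mathrm{Sym}(G)$ is a \emph{homomorphism}, not an anti-homomorphism — the inverse is there precisely to fix up the order reversal — so the relation $h_ih_j=h_k$ from $\alt{5}(\bar h)$, read on $O$, gives $g_ig_j=g_k$, which is \emph{exactly} what $\alt{5}(\bar g)$ already says. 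Your passage to $g_i^{-1}g_j^{-1}=g_k^{-1}$ silently reverses the composition order, and the "commutativity" you then derive is an artifact of that reversal. The two $\alt 5$-actions on the free orbit are perfectly compatible, and no contradiction is available at this stage.

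Indeed the free-orbit case cannot be excluded by group theory alone: take $\mathcal{X}=A_5$ with $g_a$ acting by left translation and $h_a$ by right translation by $a^{-1}$. These commute elementwise, both satisfy $\alt 5$, and $\bar f:=\bar g*\bar h$ acts by conjugation, which fixes the identity element even though it lies in $\supp(\bar g)\cap\supp(\bar h)$. So the lemma is false as a statement about abstract permutation representations of $A_5$; one must invoke the CFPO structure to rule out a $\bar g$-orbit of size $60$ (which is the content of Lemma~\ref{no60}). This is, for what it is worth, also where the paper's own argument is incomplete: the substitution of $h_j^{-1}$ for $g_j$ in $g_jg_k(x)\neq g_kg_j(x)$ is only valid at points fixed by $\bar f$, and $g_k(x)$ need not be such a point; in fact $h_j^{-1}g_k(x)=g_kh_j^{-1}(x)=g_kg_j(x)$ always, so the claimed inequality is simply false. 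The whole content of the lemma is concentrated in excluding the free orbit, and your proof — like the paper's — does not yet do that.
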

\begin{proof}
Suppose there is an $x$ such that $g_j(x) \not= x$ for some $j$ and $f_i(x) = x$ for all $i$.  Therefore
$$
\begin{array}{l c rcl }
\forall i &\vspace{10pt} & h_i g_i(x) & = & x \\
\forall i & & g_i(x) &=& h_i^{-1} (x)
\end{array}
$$

There are $g_j$ and $g_k$ such that $g_j g_k (x) \not= g_k g_j (x)$ as $A_5$ is non-abelian, and if we substitute $h_j^{-1}$ for $g_j$ we find that
$$h_j^{-1} g_k (x) \not= g_k h_j^{-1} (x)$$
contradicting $M \models \comm(\bar{g},\bar{h})$.
\end{proof}

\begin{lemma}\label{noflipping}
Let $X$ and $Y$ be extended connected components of $\supp(\bar{f})$ and $\supp(\bar{g})$.  If $\comm(\bar{f},\bar{g})$ and $|X \cap Y| \geq 1$ then either $X \subseteq Y$ or $Y \subseteq X$.
\end{lemma}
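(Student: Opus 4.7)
The plan is to argue by contradiction: suppose neither $X \subseteq Y$ nor $Y \subseteq X$, and fix $a \in X\setminus Y$, $b \in Y\setminus X$, $c \in X\cap Y$.

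Two consequences of commutativity drive the proof. First, each $g \in \bar{g}$ preserves $\supp(\bar{f})$ setwise and permutes its connected components; since the ECC decomposition of Definition~\ref{dfn:ECC} is intrinsic to $(\supp(\bar{f}), \bar{f})$, $g$ therefore permutes the ECCs of $\supp(\bar{f})$, so $g(X)$ is either $X$ itself or an ECC of $\supp(\bar{f})$ disjoint from $X$; the symmetric statement holds for each $f \in \bar{f}$ acting on ECCs of $\supp(\bar{g})$. Second, since $X$ is $\bar{f}$-closed (condition 2 of Definition~\ref{dfn:ECC}), $\bar{f}$ stabilises the singleton $\{x_X\} = \path{X, M\setminus X}$ from Lemma~\ref{lemma:ECC}, so $\bar{f}$ fixes $x_X$; symmetrically $\bar{g}$ fixes $x_Y$.

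The heart of the proof is the claim: \emph{if $\bar{g}$ stabilises $X$ setwise, then $Y \subseteq X$}. In that case $\bar{g}$ fixes $x_X$, so $x_X \notin \supp(\bar{g})$. Each connected component $D$ of $\supp(\bar{g})$ is convex, so if $D$ met both $X$ and $M\setminus X$ the path in $D$ between two such points would contain $x_X$ by Lemma~\ref{lemma:ECC}, placing $x_X \in D \subseteq \supp(\bar{g})$, which is impossible. Hence every component of $\supp(\bar{g})$ is wholly inside $X$ or wholly outside $X$; since $Y$ is a $\bar{g}$-orbit of such components and $X \cap Y \neq \emptyset$, all components of $Y$ lie in $X$, yielding $Y \subseteq X$. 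Symmetrically, if $\bar{f}$ stabilises $Y$, then $X \subseteq Y$.

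It remains to rule out the case where neither stabilisation holds. For $g \in \bar{g}$ with $g(X) \cap X = \emptyset$, the identity $g(x_Y) = x_Y$ (since $\bar{g}$ fixes $x_Y$) gives $x_Y \notin X$; otherwise $x_Y \in X \cap g(X) = \emptyset$. Symmetrically $x_X \notin Y$. Since $x_Y \in \path{a,c}$ with $a, c \in X$ but $x_Y \notin X$, both subpaths $\path{a, x_Y}$ and $\path{x_Y, c}$ cross the boundary of $X$ and therefore each contain $x_X$ by Lemma~\ref{lemma:ECC}; as the concatenated path from $a$ to $c$ cannot visit any point twice, this forces $x_X = x_Y$. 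Thus $X$ and $Y$ both sit in cones at the common boundary $v := x_X = x_Y$, and by the proof of Lemma~\ref{lemma:ECC} each lies on a single side of $v$; these sides must coincide, or else $X \cap Y = \emptyset$. The concluding step uses that the centraliser of a faithful transitive $A_5$-action inside a symmetric group is trivial, so the commuting $\bar{f}$- and $\bar{g}$-actions must permute disjoint families of cones at $v$, contradicting $X \cap Y \neq \emptyset$. The main obstacle I anticipate is this final cone-centraliser step: one must carefully handle ECCs whose components span multiple cones at $v$, and track how $\bar{f}$ and $\bar{g}$ may additionally act within each cone while still forcing their cone-supports to be disjoint.
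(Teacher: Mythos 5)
Your reduction to the case $x_X = x_Y$ is clean and genuinely different from the paper's. Where the paper works through four configuration cases (Figures 21--24), you observe that once $\bar g$ fails to stabilise $X$ and $\bar f$ fails to stabilise $Y$, the fixed-point facts $g(x_Y)=x_Y$ and $f(x_X)=x_X$ force $x_Y \notin X$ and $x_X\notin Y$, and then the path $\path{a,c}$ with $a\in X\setminus Y$, $c\in X\cap Y$ must contain $x_Y$ and, being a simple path, can contain the boundary point $x_X$ only once, forcing $x_X=x_Y$. That is a tidy alternative to the paper's case analysis, and your opening claim (if $\bar g$ stabilises $X$ then $Y\subseteq X$) disposes of the remaining configurations, though it needs a little more care: an ECC of $\supp(\bar g)$ is not merely a $\bar g$-orbit of components (it may include the glue point $e$ and several orbits joined by it), so you should argue that the glue point also lands in $X$; this is a repairable detail.

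The genuine gap is in the final step, which you yourself flag. The assertion ``the centraliser of a faithful transitive $A_5$-action inside a symmetric group is trivial'' is false: the centraliser of a transitive $G$-action on $G/H$ in $\mathrm{Sym}(G/H)$ is isomorphic to $N_G(H)/H$, which for $A_5$ acting regularly is all of $A_5$, and even for the degree-30 action (point stabiliser $C_2$) is $C_2\neq 1$. More importantly, even when the centraliser is trivial this does not show that the two commuting $A_5$-actions on the cones at $v$ have disjoint supports --- two commuting copies of $A_5$ inside $\mathrm{Sym}(\Omega)$ can both move the same points (the regular left and right actions on $A_5$ commute and both act transitively). So no abstract centraliser fact will close the case $x_X=x_Y$. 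The paper at this point switches to a concrete element computation: it locates an extended cone $C_z$ of $x=x_X=x_Y$, chooses $f_i$ with $f_i(C_z)\subseteq X\setminus Y$ and $g_j$ with $g_j(C_z)\subseteq Y\setminus X$, and shows directly that $f_ig_j(C_z)\neq g_jf_i(C_z)$, contradicting $\comm(\bar f,\bar g)$ (Figure 20). Your proof needs a replacement of this kind for its last paragraph; the centraliser claim cannot be patched.
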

\begin{proof}
Let $\lbrace x \rbrace = \path{X, M \setminus X}$ and $\lbrace y \rbrace = \path{Y, M \setminus Y}$.  These are singletons by Lemma \ref{lemma:ECC}.  Suppose $X \nsubseteq Y$ and $ Y \nsubseteq X$.

First suppose that $x = y$.  This means that $\path{X,Y}= \lbrace x \rbrace$, and that $X$ and $Y$ are entirely contained in the upwards and downwards extended cones of $x$, as illustrated in Figure 4.1.

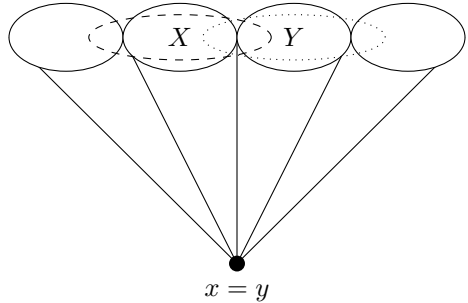
\begin{figure}[ht]\label{fig:noflipping}
\begin{center}
\begin{tikzpicture}[scale=0.15]

\draw (0,20) -- (20,0);
\draw (10,20) -- (20,0);
\draw (20,20) -- (20,0);
\draw (30,20) -- (20,0);
\draw (40,20) -- (20,0) ;

\fill[white] (5,20) ellipse (5 and 3);
\draw (5,20) ellipse (5 and 3);
\fill[white] (15,20) ellipse (5 and 3);
\draw (15,20) ellipse (5 and 3);
\fill[white] (25,20) ellipse (5 and 3);
\draw (25,20) ellipse (5 and 3);
\fill[white] (35,20) ellipse (5 and 3);
\draw (35,20) ellipse (5 and 3);

\draw[dashed] (15,20) ellipse (8 and 2);
\draw[dotted] (25,20) ellipse (8 and 2);
\draw (15,20) node {$X$};
\draw (25,20) node {$Y$};

\fill (20,0) circle (0.7);
\draw[anchor=north] (20,-1) node {$x=y$};

\end{tikzpicture}
\end{center}
\caption{If $x=y$ in Lemma \ref{noflipping}}
\end{figure}

Recall Definition \ref{dfn:ECC}, and note that $\bar{g}(X \cap Y)$ satisfies conditions 1 and 3 because both $X$ and $Y$ do, and by definition it satisfies condition 2.  Therefore $Y \subseteq \bar{g}(X \cap Y)$.  Thus if $\bar{g}(X \cap Y) \subseteq X$ then $Y \subseteq X$ and we are done.  Similarly if $\bar{f}(X \cap Y) \subseteq Y$ then $X \subseteq Y$ and we are done.

We now suppose that there is a $z \in X \cap Y$ such that $\bar{f}(z) \nsubseteq Y$ and $\bar{g}(z) \nsubseteq X$.  Let $C_z$ be the extended cone of $x$ that contains $z$.  We consider the action of $\bar{f}$ and $\bar{g}$ on the set $\bar{f}(C_z) \cup \bar{g} (C_z)$.

Let $f_i \in \bar{f}$ map $C_z$ into $X \setminus Y$ and let $g_j$ map $C_z$ into $Y \setminus X$.  Then
$$f_i g_j (C_z) = g_j(C_z)  \;\mathrm{and}\; g_j f_i (C_z) = f_i(C_z)$$
contradicting the assumption that $\aut(M) \models \comm(\bar{f},\bar{g})$.  This is depicted in Figure 20.

\begin{figure}[ht]\label{fig:C_z}
\begin{center}
\begin{tikzpicture}[scale=0.18]
\draw (20,0) ellipse (15 and 10);
\draw (40,0) ellipse (15 and 10);

\draw[->] (30,0) .. controls (40,10) and (40,-10) .. (47.9,-2.1); 
\draw[->] (30,0) .. controls (20,-10) and (20,10) .. (12.1,2.1); 

\fill[white] (30,0) circle (3);
\draw (30,0) circle (3);
\draw (30,0) node {$C_z$};

\fill[white] (10,0) circle (3);
\draw (10,0) circle (3);
\draw (10,0) node {$f_i(C_z)$};

\fill[white] (50,0) circle (3);
\draw (50,0) circle (3);
\draw (50,0) node {$g_j(C_z)$};

\draw (42,0) node {$g_j$};
\draw (18,0) node {$f_i$};

\draw (20,7) node {$X$};
\draw (40,-7) node {$Y$};
\end{tikzpicture}
\end{center}
\caption{Images of $C_z$}
\end{figure}
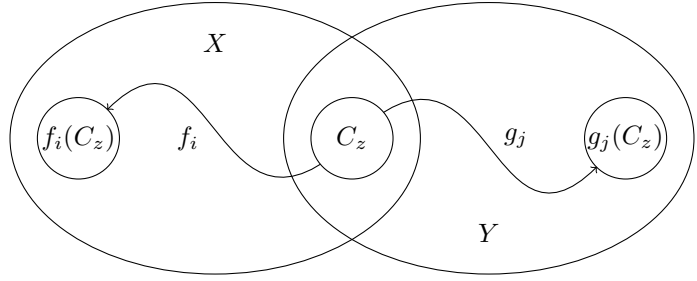

Now suppose that $x \not= y$.  Suppose $x \not\in Y$ and $y \in X$, and let $z \in Y$.  By definition, $y \in \path{z,x}$, and since $x$ is an endpoint of that path, $\path{z,x} \subseteq X$, and so $z \in X$.  This is depicted in Figure 21.

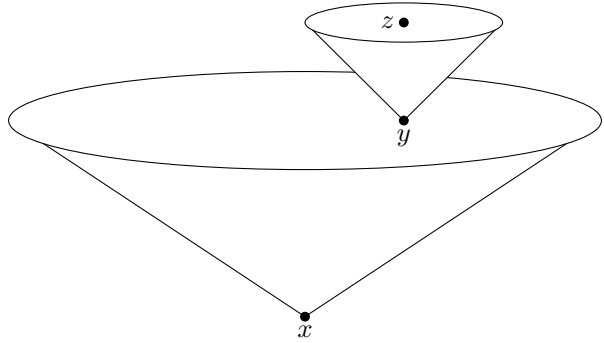
\begin{figure}[ht]\label{fig:xnotinY}
\begin{center}
\begin{tikzpicture}[scale=0.13]
\draw (0,20) -- (30,0) -- (60,20);
\fill[white] (30,20) ellipse (30 and 5);
\draw (30,20) ellipse (30 and 5);
\fill (30,0) circle (0.5);

\fill[white] (40,20) -- (30,30) -- (50,30) -- (40,20);
\draw (30,30) -- (40,20) -- (50,30);
\fill[white] (40,30) ellipse (10 and 2);
\draw (40,30) ellipse (10 and 2);
\fill (40,20) circle (0.5);

\fill (40,30) circle (0.5);

\draw[anchor=north] (30,0) node {$x$};
\draw[anchor=north] (40,20) node {$y$};
\draw[anchor=east] (40,30) node {$z$};

\end{tikzpicture}
\end{center}
\caption{$x \not\in Y$ and $y \in X$}
\end{figure}

If both $x \not\in Y$ and $y \not\in X$ then $\path{x,y} \subseteq M \setminus (X \cup Y)$.  This is depicted in Figure 22.  Let $z \in Y$.  By definition $y \in \path{x,z}$ and since $\path{x,y} \nsubseteq X$, we know that $z \not\in X$.  Similarly, if $z \in X$ then $z \not\in Y$, contradicting the assumption that $X \cap Y \not= \emptyset$.

\begin{figure}[ht]\label{fig:noflippingfinal}
\begin{center}
\begin{tikzpicture}[scale=0.2]
\draw (0,10) -- (15,0) -- (30,10);
\draw[dashed] (15,10) -- (30,0) -- (45,10);

\fill[white] (30,10) ellipse (15 and 3);
fill[white] (15,10) ellipse (15 and 3);
\draw (15,10) ellipse (15 and 3);
\draw[dashed] (30,10) ellipse (15 and 3);
\draw[dotted] (15,0) -- (22.5,8.5) -- (30,0);

\fill (15,0) circle (0.5);
\fill (30,0) circle (0.5);
\draw[anchor=north] (15,0) node {$x$};
\draw[anchor=north] (30,0) node {$y$};

\draw[anchor=south] (22.5,8.5) node {$\path{x,y}$};

\draw (10,10) node {$X$};
\draw (35,10) node {$Y$};
\end{tikzpicture}
\end{center}
\caption{$x \not\in Y$ and $y \not\in X$}
\end{figure}
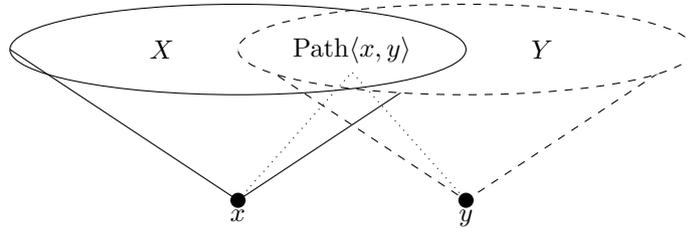

We therefore suppose that $x \in Y$ and $y \in X$.

$x \in \path{y,f_i(y)}$ for any $f_i$, as otherwise $X$ will not be an extended connected component.

Path-betweenness is preserved by automorphisms, so $g_j(x) \in \path{g_j(y),g_j f_i(y)}$ and $\bar{f}$ and $\bar{g}$ commute, and $y$ is fixed by $\bar{g}$, hence $g_j(x) \in \path{y,f_i(y)}$.  By symmetry $y \in \path{x,g_j(x)}$ and $f_i(y) \in \path{x,g_j(x)}$.  From these facts we can deduce the path-configuration of $x$, $y$, $g_j(x)$ and $f_i(y)$.
\begin{figure}[h]
\begin{center}
\begin{tikzpicture}[scale=0.3]
\draw (5,5) -- (15,5);
\fill (5,5) circle (0.07);
\fill (15,5) circle (0.07);

\draw (5,4.4) node {$x$};
\draw (15,4.4) node {$y$};
\draw (10,5.8) node {$\path{x,y}$};
\end{tikzpicture}
\end{center}
\caption{$\path{x,y}$}
\end{figure}

Since $y \in \path{x,g_j(x)}$ and $x \in \path{y,f_i(y)}$ we may add to Figure 23 $f_i(y)$ and $g_j(x)$ to obtain Figure 24.
\begin{figure}[h]
\begin{center}
\begin{tikzpicture}[scale=0.3]
\draw (-5,5) -- (25,5);
\fill (-5,5) circle (0.08);
\fill (5,5) circle (0.08);
\fill (15,5) circle (0.08);
\fill (25,5) circle (0.08);

\draw (5,4.2) node {$x$};
\draw (15,4.2) node {$y$};
\draw (25,4.2) node {$g_j(x)$};
\draw (-5,4.2) node {$f_i(y)$};
\end{tikzpicture}
\end{center}
\caption{$\path{x,y}$, $f_i(y)$ and $g_j(x)$}
\end{figure}

But we also know that $f_i(y) \in \path{x,g_j(x)}$, so we deduce that $f_i(y)=x$.  Similarly $g_j(x) \in \path{y,f_i(y)}$ shows that $g_j(x) = y$.  This contradicts the fact that $\bar{f}$ fixes $x$ and $\bar{g}$ fixes $y$, so we conclude that either $X \subseteq Y$ or $Y \subseteq X$.

\end{proof}

\begin{lemma}\label{longorbits}
If $\aut(M) \models \comm(\bar{f},\bar{g})$ and $\supp(\bar{f}) \cap \supp(\bar{g}) \not= \emptyset $ then $\bar{f} * \bar{g}$ has an orbit of length 20 in $\supp(\bar{f}) \cap \supp(\bar{g})$.  If $\bar{f} * \bar{g}$ has an orbit of length 20 then it also has a non-trivial orbit of some length other than 20.
\end{lemma}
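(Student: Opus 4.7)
Plan.  Since $\bar f$ and $\bar g$ are commuting, faithful copies of $A_5$ in $\aut(M)$, the natural map $A_5\times A_5\to\aut(M)$ given by $(a,b)\mapsto f_ag_b$ has kernel that is normal in $A_5\times A_5$; by simplicity of $A_5$ and faithfulness of the two tuples, this kernel must be trivial.  Writing $G = A_5\times A_5$, we obtain $\langle\bar f,\bar g\rangle\cong G$, and $\bar f*\bar g$ is the image of the diagonal $\Delta\cong A_5$.  For any $y\in M$ the $G$-orbit through $y$ is isomorphic to $G/K_y$ with $K_y = \mathrm{Stab}_G(y)$, and the $\Delta$-orbits on $G/K_y$ correspond to the double cosets $\Delta\backslash G/K_y$; the orbit through $(a,b)K_y$ has length $60/|\Delta\cap(a,b)K_y(a,b)^{-1}|$.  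An orbit of length exactly $20$ thus corresponds to a conjugate of $K_y$ meeting $\Delta$ in a cyclic group of order $3$.

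For the first assertion, pick $x\in\supp(\bar f)\cap\supp(\bar g)$ and analyse $K_x$ via Goursat's lemma as data $(A,A_0,B,B_0,\phi)$, where $A_0 = \mathrm{Stab}_{\bar f}(x)$ and $B_0 = \mathrm{Stab}_{\bar g}(x)$ are both proper in $A_5$ by the support condition.  Lemma~\ref{noflipping}, applied to the ECCs through $x$, forces them to be comparable---say $X\subseteq Y$---and I would split on whether $\bar g$ preserves $X$ set-wise (so both $A_5$'s restrict to commuting actions on $X$) or genuinely permutes several ECCs of $\supp(\bar f)$ that lie inside $Y$ (in which case $\bar g$ must act faithfully on those components by simplicity of $A_5$).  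In either case the geometric coupling produces an element $(\alpha,\beta)\in K_x$ with $\alpha,\beta\in A_5$ both $3$-cycles; that is, $K_x$ cannot decouple as a pure product of cyclic stabilisers of coprime orders.  Since all $3$-cycles of $A_5$ are conjugate, some $g\in G$ satisfies $g(\alpha,\beta)g^{-1}\in\Delta$, so $\Delta\cap gK_xg^{-1}$ contains a copy of $\mathbb Z/3$; a final check eliminates the possibility that this intersection is strictly larger, since the only subgroups of $A_5$ properly containing a $\mathbb Z/3$ are $S_3, A_4, A_5$, and each would force the corresponding Goursat projection to exceed one of the proper stabilisers $A_0, B_0$.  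The resulting $\Delta$-orbit has length $20$ and lies in $\supp(\bar f)\cap\supp(\bar g)$, because points on a common $G$-orbit have conjugate stabilisers and hence proper projections in both coordinates.

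For the second assertion, suppose $\bar f*\bar g$ has a $20$-orbit through some $x$, so $\Delta\cap K_x\cong\mathbb Z/3$.  A short Goursat enumeration rules out $|K_x| = 180$: no subgroup of $G$ of that order can have both projections proper and simultaneously satisfy $\Delta K = G$.  Hence the $G$-orbit $O$ through $x$ has $|O|>20$ and must decompose into further $\Delta$-orbits.  To exhibit one of length other than $20$, I would track $\Delta\cap gK_xg^{-1}$ as $g$ ranges over a transversal for $N_G(K_x)$ in $G$: because $K_x$ is not normal, its conjugates meet $\Delta$ in subgroups of varying order, and since these orders lie in $\{1,2,3,4,5,6,10,12,60\}$, at least one intersection has order other than $3$ and $60$, producing a non-trivial $\Delta$-orbit of length in $\{5,6,10,12,15,30,60\}$.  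The main obstacle throughout is the case analysis in the first assertion---specifically, using Lemma~\ref{noflipping} and the commutation hypothesis to ensure that $K_x$ never decouples as a pure product $A_0\times B_0$, which would be the only configuration carrying no diagonal $3$-cycles and hence no $20$-orbit.
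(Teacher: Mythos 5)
Your proposal takes a genuinely different route from the paper, and that route has real gaps. The paper does not attempt to prove the purely group-theoretic content at all: it quotes Lemma~3.5 of Shelah--Truss (\cite{ShelahTrussQuotients}), which already says that two commuting faithful copies of $A_5$ that are jointly transitive on a set must have a diagonal orbit of length~$20$, and that a~$20$-orbit forces another non-trivial orbit of different length. The actual work in the paper's proof is \emph{constructing the set on which that lemma applies}: using Lemma~\ref{noflipping} to find nested extended connected components, taking a connected component $X$ inside the smaller one, and letting $\mathcal{X}$ be the set of translates of $X$ under $\langle\bar f,\bar g\rangle$. The simplicity of $A_5$ then makes both $\bar f$ and $\bar g$ act faithfully on $\mathcal{X}$, and transitivity is automatic, so Shelah--Truss applies. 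You never construct such a set; you work directly with point stabilisers $K_x$ in $M$, which need not exhibit the faithful-and-transitive configuration that the cited lemma requires.

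More seriously, the places where you depart from citation and try to reprove the group theory are exactly where your argument is thinnest. The first assertion hinges on the unproved claim that ``the geometric coupling produces an element $(\alpha,\beta)\in K_x$ with $\alpha,\beta$ both $3$-cycles''; that sentence is carrying the entire weight of the result and is not justified by the case split you sketch. For the second assertion, the inference ``because $K_x$ is not normal, its conjugates meet $\Delta$ in subgroups of varying order'' is a non sequitur: distinct conjugates can meet $\Delta$ in distinct subgroups that all have the same order, so non-normality alone does not give a $\Delta$-orbit of length other than~$20$. (You also implicitly need that the coordinate stabilisers $A_0,B_0$ are not merely proper but non-trivial --- which follows from Lemma~\ref{no60} --- but you never invoke this.) The Goursat framework you set up is a reasonable scaffold for reproving Shelah--Truss Lemma~3.5, but as written it does not close either of the two required implications, and the paper's own proof deliberately does not attempt that reproof.
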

\begin{proof}
Lemma 3.5 of \cite{ShelahTrussQuotients} is:

``Suppose that $\bar{f}, \bar{g}$ are subgroups of $\mathrm{Sym}(\mathcal{X})$ isomorphic to $A_5$ (in the specified listings) which centralize each other, and such that $\langle \bar{f}, \bar{g} \rangle$ is transitive on $\mathcal{X}$.  Then $\bar{f} * \bar{g}$ has an orbit of length 20.  Moreover, if $\bar{f} * \bar{g}$ has an orbit of length 20 then is also has an orbit of some other length greater than 1.''

Let $\lbrace A_i \: : \: i \in I \rbrace$ be the ECC of $\supp(\bar{f})$ and let $\lbrace B_j \: : \: j \in J \rbrace$ be the ECC of $\supp(\bar{g})$.  Lemma \ref{noflipping} shows that if $A_i \cap B_j \not= \emptyset$ then $A_i \subseteq B_j$ or $B_j \subseteq A_i$.

Pick one such $A$ and $B$, and without loss of generality assume that $A \subseteq B$.  Let $X$ be a connected component of $A$.
$$\mathcal{X} := \langle \bar{f}, \bar{g} \rangle (X) $$
Each member of $\mathcal{X}$ is a translate of $X$.

We define $\phi_{f} :\bar{f} \rightarrow \mathrm{Sym}(\mathcal{X})$ as follows:
$\phi_{f}(f_i) = (X \mapsto f_i(X))$.
This is a homomorphism, and since $A_5$ is simple, so $\phi$'s kernel is trivial, and $\phi_f(\bar{f}) \cong A_5$.  Similarly, if we define $\phi_{g} :\bar{g} \rightarrow \mathrm{Sym}(\mathcal{X})$ as follows:
$\phi_{g}(g_i) = (X \mapsto g_i(X))$.
then $\phi_g(\bar{g}) \cong A_5$.

The `specified listings' in Shelah and Truss' Lemma 3.5 refers to the fact that the formula $A_5(\bar{f})$ will be different depending on how we enumerate $A_5$.  For example, we could insist that $f_0$ is the identity, and this would give a different formula to if we insisted that $f_5$ is the identity.  Our formula $A_5$ is fixed so we need not worry about this assumption.

$\langle \phi_f(\bar{f}), \phi_g(\bar{g}) \rangle$ is transitive on $\mathcal{X}$ since $\mathcal{X}$ is an orbit of $\langle \bar{f}, \bar{g} \rangle$.

Therefore Lemma 3.5 of \cite{ShelahTrussQuotients} is applicable to $\mathcal{X}$.
\end{proof}

\begin{lemma}\label{no60}
If $\aut(M) \models A_5(\bar{f})$ then no orbit of $\bar{f}$ has length 60.
\end{lemma}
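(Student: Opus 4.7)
My plan is to argue by contradiction, supposing some orbit $\bar{f}(x)$ has length $60$. Since $|A_5| = 60$, this forces the action of $\bar{f}$ on $\bar{f}(x)$ to be the regular representation, so the stabilizer of $x$ in $\bar{f}$ is trivial. Reusing the finite-tree construction from Lemma~\ref{Lemma:A5Behaves}, I form $S := \bigcup_{i,j}\path{x_i,x_j}^-$ and invoke the tree $T$ from Part~I with $\aut(S) \cong_P \aut(T)$, whose root $r$ is fixed by every element of $\bar{f}$. Since $r$ is globally fixed while the stabilizer of $x$ in $\bar{f}$ is trivial, $r \neq x$, so the path $\path{r,x}$ in $S$ is non-trivial.

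I would next walk this path, writing it as $r = p_0, p_1, \ldots, p_k = x$, and analyse the chain of point-stabilizers $H_i := \{f \in \bar{f} : f(p_i) = p_i\}$. This is a descending chain $\bar{f} = H_0 \supseteq H_1 \supseteq \cdots \supseteq H_k = \{e\}$ inside the subgroup lattice of $A_5$, whose successive indices multiply to $60$. At each step, $H_{i-1}$ acts on the upward-cones and downward-cones of $p_{i-1}$ in $M$ separately, and the cone containing $p_i$ lies in an $H_{i-1}$-orbit of size $[H_{i-1}:H_i]$; by simplicity of $A_5$, any non-trivial orbit of any subgroup on these cones yields a faithful action on that orbit, which heavily restricts the admissible orbit sizes.

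The main obstacle is closing the argument by ruling out every such subgroup chain whose indices multiply to $60$. My plan is a case analysis on the finitely many maximal chains in the subgroup lattice of $A_5$, using Lemma~\ref{noflipping} to control how the sub-CFPOs rooted at successive $p_i$ nest and Lemma~\ref{RestrictionSubgroups} to restrict the action to subtrees and set up recursion on smaller $A_5$-invariant pieces of $S$. In each case, I expect to force either a fixed point of some $H_i$ strictly between $p_i$ and $x$ (shortening the chain and contradicting its construction) or else a cone-action at some $p_{i-1}$ requiring more cones of a single type than can coexist with the further descent to $\{e\}$. I anticipate the longest chain $A_5 \supset A_4 \supset V_4 \supset \mathbb{Z}/2 \supset \{e\}$ to be the hardest case, because its four successive faithful cone-actions are the least constrained group-theoretically and demand the most careful combinatorial bookkeeping against the tree structure of $T$ coming from a CFPO in $K_{Cone}$.
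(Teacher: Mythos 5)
Your approach is genuinely different from the paper's, and unfortunately it has a gap that I don't think can be closed as sketched. The paper's proof is a short, purely algebraic argument built on the biregular representation of $A_5$: if $\bar{f}$ had a $60$-orbit, then (after transferring to a canonical model $X(5,60,\mathbb{Z})$) the $60$ cones above a point $z$ would admit both a left-regular action $\bar{g}$ and a right-regular action $\bar{h}$, which commute, and whose pointwise product $\bar{g}*\bar{h}$ (the conjugation action) fixes the cone labelled $\mathrm{id}$, even though both $\bar{g}$ and $\bar{h}$ move it. This directly contradicts Lemma~\ref{lemma:nocancellingorbits}, which forbids a point in $\supp(\bar{g})\cap\supp(\bar{h})$ from falling outside $\supp(\bar{g}*\bar{h})$ when $\bar{g}$ and $\bar{h}$ commute. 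Nothing in that argument requires tree-walking, stabilizer chains, or subgroup-lattice case analysis.

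Your proposal instead walks the path $r = p_0, \ldots, p_k = x$ and studies the descending chain of stabilizers $H_i$. There are several problems. First, you never explain why this chain must be strictly descending at every step, or why its jumps must align with the cone-orbit sizes in a way that eventually yields a contradiction; you say you ``expect to force'' a fixed point between $p_i$ and $x$, but such a fixed point would merely mean the chain stalls, which is not in itself a contradiction. Second, the combinatorial input you lean on --- Lemmas~\ref{noflipping} and~\ref{RestrictionSubgroups} --- constrain how commuting tuples and their extended connected components interact, but you have only one tuple $\bar{f}$ here, so those lemmas don't straightforwardly apply. Third, and most importantly, the case analysis over all maximal subgroup chains of $A_5$ with indices multiplying to $60$ is left entirely open, and you yourself flag the chain $A_5 \supset A_4 \supset V_4 \supset \mathbb{Z}/2 \supset \{e\}$ as potentially intractable. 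That is precisely the part that would need to be done, and it is not obvious that the CFPO structure alone (without something like Lemma~\ref{lemma:nocancellingorbits}) rules out every such chain. I'd recommend abandoning the tree-walking plan in favour of the biregular-representation trick: it replaces an open-ended combinatorial case analysis with a single five-line algebraic computation.
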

\begin{proof}
Let $x \in M$ be such that $|\bar{f}(x)| = 60$, and let $\lbrace x_0, \ldots x_{59} \rbrace$ be an enumeration of $\bar{f}(x)$. Take $X(5,60,\mathbb{Z})$, and pick an arbitrary $z \in X(5,60,\mathbb{Z})$, and label the successors of $z$ as $z_0, \ldots z_{59}$.  For each $f_i \in \bar{f}$, let $g_i \in \aut(X(5,60,\mathbb{Z})$ be induced by the partial automorphism
$$z_n \mapsto z_m \: \mathrm{if} \; f_i(x_n)=x_m$$
$\aut(X(5,60,\mathbb{Z})) \models A_5(\bar{g})$ and $\bar{g}$ has an orbit of length 60.  Let $C_i$ be the extended cone of $z$ that contains $z_i$.

For each $y \in \bar{g}(z)$ there is a unique $g_i \in \bar{g}$ such that $g_i(z)=y$, so we may label $\bar{g}(x)$ by elements of $\bar{g}$.  In this way, we can view the action of $\bar{g}$ on $\bar{g}(x)$ as left multiplication.

We define $\bar{h}$ on each $g \in \bar{h}$ as follows:
$$h_i : g \mapsto g g_i^{-1}$$
This $\bar{h}$ commutes with $\bar{g}$, as
$$
\begin{array}{rcl}
h_i g_j  (g) &=& h_i (g_j g) \\
& = & (g_j g) g_i^{-1} \\
& = & g_j (g g_i^{-1}) \\
& = & g_j (h_i (g)) \\
& = & g_j h_i (g) \\
\end{array}
$$
We may extend each $h_i \in \bar{h}$ to an automorphism of $X(5,60,\mathbb{Z})$ as follows
$$y \mapsto \left\lbrace\begin{array}{l c}
y & z \in X(5,60,\mathbb{Z})\setminus \bigcup_{i < 60} C_i \\
g_j(y) & y \in C_k \: \textnormal{and} \: g_j: C_k \mapsto h_i(C_k)
\end{array} \right.$$

We now have a $\bar{h} \in \aut(X(5,60,\mathbb{Z}))$ such that $\aut(X(5,60,\mathbb{Z})) \models \comm(\bar{g},\bar{h})$

Remember that $\mathrm{id} \in \bar{g}$ and consider $\bar{g} * \bar{h}$.  For all $g_i h_i \in \bar{g} * \bar{h}$
$$
\begin{array}{rcl}
g_i h_i(\mathrm{id})  &=& g_i \mathrm{id} g_i^{-1} \\
 & = & \mathrm{id}
\end{array}
$$
Since $\mathrm{id}(x)$ was labelled as $\mathrm{id}$, this means that $x \in \supp(\bar{g}) \cap \supp(\bar{h})$, but $x \not\in \supp(\bar{g} * \bar{h})$, contradicting Lemma \ref{lemma:nocancellingorbits}.

\end{proof}

\begin{lemma}\label{30splits}
If $\aut(M) \models A_5(\bar{f})$ and there is an $x \in M$ such that $|\bar{f}(x)|=30$ then there are $\bar{g}$ and $\bar{h}$ such that $\aut(M) \models \comm(\bar{g},\bar{h})$ and $\bar{f} = \bar{g} * \bar{h}$.
\end{lemma}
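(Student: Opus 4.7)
The plan is to realise the $30$-action of $\bar{f}$ as the diagonal of a commuting $A_5 \times A_5$-action, exploiting the algebraic bijection $A_5/\langle\tau\rangle \cong A_5/A_4 \times A_5/D_5$ where $\tau$ is the double transposition generating the stabiliser of $x$. Since $|\bar{f}(x)| = 30$, $\mathrm{Stab}_{\bar{f}}(x)$ is a subgroup of order $2$, generated by some double transposition $\tau$. Inside $\bar{f} \cong A_5$ there is a unique index-$5$ subgroup $A_4$ containing $\tau$ (namely the normaliser of the Sylow-$2$ subgroup $V_4 \ni \tau$), and exactly two index-$6$ subgroups $D_5$ containing $\tau$; fix one. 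Since $|A_4||D_5| = 120 = 2|A_5|$ and $\langle\tau\rangle \subseteq A_4 \cap D_5$, counting orders forces $A_4 \cap D_5 = \langle\tau\rangle$ (equivalently $A_4 D_5 = A_5$), so the map $g\langle\tau\rangle \mapsto (gA_4,\, gD_5)$ is an $A_5$-equivariant bijection from $\bar{f}(x)$ onto $A_5/A_4 \times A_5/D_5$, sending the $\bar{f}$-action to the diagonal action on the product.

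Using this identification, I would define $\bar{g}$ to act on $\bar{f}(x)$ only on the first coordinate (producing $6$ orbits of size $5$) and $\bar{h}$ to act only on the second coordinate ($5$ orbits of size $6$). Each is a faithful $A_5$-action, so by using the same listing of $A_5$ as the one witnessing $\alt{5}(\bar{f})$, both tuples $\bar{g}$ and $\bar{h}$ automatically satisfy $\alt{5}$ with the specified listing. The two actions move disjoint coordinates and therefore commute pairwise (every $g_k$ with every $h_j$), and their entrywise product $g_k h_k$ is precisely the diagonal action, which equals $f_k$ under our identification. Thus on the $30$-orbit $\bar{f}(x)$ the decomposition $\bar{f} = \bar{g} * \bar{h}$ and $\mathrm{Comm}(\bar{g},\bar{h})$ both hold at the level of permutations.

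The main obstacle is to upgrade the permutations $g_k, h_k$ of $\bar{f}(x)$ to genuine elements of $\aut(M)$ (extended by the identity off the path-closure of $\bar{f}(x)$, so as to ensure $\supp(\bar{g}), \supp(\bar{h}) \subseteq \supp(\bar{f})$ as required by Lemma \ref{lemma:nocancellingorbits}). This is where cone-transitivity and the ramification bounds $ro\uparrow(M),\, ro\downarrow(M) \geq 5$ do the real work: at each branching point of the finite sub-CFPO spanned by $\bar{f}(x)$, the cones carry an induced $\bar{f}$-action, and the richness of $\aut(M)$ guaranteed by these hypotheses lets us realise the two commuting factor actions simultaneously on those cones, thereby lifting each $g_k$ and $h_k$ to a global automorphism of $M$. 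Once the lifts are in hand, the formulas $\alt{5}(\bar{g})$, $\alt{5}(\bar{h})$, $\mathrm{Comm}(\bar{g},\bar{h})$, and $\bar{g} * \bar{h} = \bar{f}$ follow directly from the construction, completing the proof.
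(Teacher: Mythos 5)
Your decomposition is exactly the one the paper uses: with $G = A_4$ (order $12$) and $H = D_5$ (order $10$), the paper identifies the $\bar{f}$-action on the cones at the fixed point of the ECC containing $x$ with the diagonal action of $A_5$ on $A_5/G \times A_5/H$, then lets $\bar{g}$ move the first coordinate and $\bar{h}$ the second. Your verification that $A_4 \cap D_5 = \langle\tau\rangle$ (whence $|A_4 D_5| = 60$ and the map $a\langle\tau\rangle \mapsto (aA_4, aD_5)$ is an equivariant bijection) is a nice explicit justification of the product identification which the paper simply asserts.

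However, the lifting step — the part you yourself flag as the main obstacle — is where the proposal has a genuine gap, and it is precisely the part where the paper does something you did not anticipate. You propose to appeal to cone transitivity and the ramification bounds to \emph{manufacture} automorphisms realising the two factor actions. But automorphisms built fresh in this way carry no relation to $\bar{f}$ inside the individual cones, so there is no reason for the products $g_i h_i$ to equal $f_i$ as elements of $\aut(M)$. The paper instead builds each $g_i$ and $h_i$ cone-by-cone out of elements of $\bar{f}$ itself (choosing, for each cone $(aG,bH)$, the $f_j$ which realises the desired coordinate move), so that the lifts are automatically automorphisms and the product $g_i h_i$ agrees with $f_i$ on every cone by construction; cone transitivity plays no role. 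This creates a well-definedness problem (two elements of $\bar{f}$ may send the same cone to the same cone but act differently inside it), which the paper resolves by invoking Lemma~\ref{no60}: if the two choices disagreed, there would be an orbit of length $60$. Your proposal does not address this issue at all. A second, smaller defect: you extend $\bar{g}$ and $\bar{h}$ by the identity off the path-closure of $\bar{f}(x)$, but then $\bar{g}*\bar{h}$ is also the identity there, so $\bar{g}*\bar{h}=\bar{f}$ fails whenever $\supp(\bar{f})$ is not confined to that region (for instance, when it has several ECCs). The paper avoids this by setting $g_i = f_i$ and $h_i = \mathrm{id}$ on $M\setminus X$, not both to the identity.
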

\begin{proof}
Let $\aut(M) \models A_5(\bar{f})$ be such that there is an $x \in M$ such that $|\bar{f}(x)|=30$.  Let $X$ be the ECC of $\supp(\bar{f})$ that contains $x$.

Let $G$ and $H$ be subgroups of $A_5$ such that $|G|=12$ and $|H|=10$.  There is a transitive action of $A_5$ on $\lbrace aG \: : \: a \in A_5 \rbrace \times \lbrace bH \: : \: b \in A_5 \rbrace$ which is isomorphic (as permutation groups) to $\bar{f}$'s action on $\bar{f}(x)$.

We may therefore label each cone of $X$ as $(aG,bH)$.  We define $\bar{g}, \bar{h} \in \aut(M)$ as follows:
$$
g_i : z \mapsto \left\lbrace
\begin{array}{l c}
f_i(z) & z \in M \setminus X \\
f_j(z) & f_j((aG,bH) ) = (g_iaG,bH)
\end{array}
\right.
$$
$$
h_i : z \mapsto \left\lbrace
\begin{array}{l c}
z & z \in M \setminus X \\
f_j(z) & f_j((aG,bH) ) = (aG,h_ibH)
\end{array}
\right.
$$

If $\bar{g}$ and $\bar{h}$ are not well-defined then there is an $f_i$ such that $f_i((aG,bH))=(aG,bH)$ but there is a $z \in (aG,bH)$ such that $f_i(z) \not= z$.  However $\bar{f}$ acts transitively on the $(aG,bH)$, so $|\bar{f}(z)|= 60$.  Lemma \ref{no60} shows that no such $\bar{f}$ exists.

If $z \in M \setminus X $ then $g_ih_i(z) = g_i(z) = f_i(z)$, so $(\bar{g} * \bar{h})|_{M \setminus X} = \bar{f}|_{M\setminus X}$.  If $z \in (aG,bH)$ then $g_ih_i((aG,bH)) = (g_i(a)G,h_i(b)H) = f_i((aG,bH))$, therefore $g_i h_i(z) = f_i(z)$, and so $(\bar{g}*\bar{h})|_X = \bar{f}|_X$.

Together, we now have $(\bar{g}*\bar{h}) = \bar{f}$, so the lemma is proved.
\end{proof}

\begin{prop}\label{prop:indec}
$\aut(M) \models \indec(\bar{f})$ if and only if $\supp(\bar{f})$ has exactly one extended connected component and every orbit has less than 30 members.
\end{prop}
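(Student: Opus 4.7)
The statement is an equivalence, and I would prove each direction by contrapositive. The forward direction (indecomposable implies one ECC and all orbits have length less than 30) is short: if $\supp(\bar{f})$ has two or more ECCs, I partition them into non-empty collections $X, Y$; by Lemma \ref{RestrictionSubgroups}, $(f_i|^X)$ and $(f_i|^Y)$ both satisfy $\alt{5}$, they commute because their supports are disjoint, and their coordinatewise product equals $\bar{f}$, witnessing $\neg\indec(\bar{f})$. If instead some orbit of $\bar{f}$ has length at least 30, Lemma \ref{no60} forces the length to be exactly 30, and Lemma \ref{30splits} supplies the required decomposition.

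For the converse I assume $\bar{f} = \bar{g} * \bar{h}$ with $\comm(\bar{g}, \bar{h})$ and want to show multiple ECCs or a length-30 orbit. By Lemma \ref{lemma:nocancellingorbits} both $\supp(\bar{g})$ and $\supp(\bar{h})$ sit inside $\supp(\bar{f})$, and both are non-empty because each is a copy of $A_5$. I split on whether the two supports intersect. If they are disjoint, commutation of $\bar{g}$ and $\bar{h}$ makes each setwise preserve the other's support, so $\supp(\bar{g})$ and $\supp(\bar{h})$ are disjoint non-empty $\bar{f}$-invariant subsets of $\supp(\bar{f})$. Using Lemma \ref{lemma:ECC} (each $\bar{g}$-ECC meets the rest of $M$ at a single base point), I show that ECCs of $\bar{f}$ cannot straddle this disjoint union---an ECC spanning both supports would contradict the minimality condition in Definition \ref{dfn:ECC} against the $\bar{f}$-invariance of $\supp(\bar{g})$---giving at least two ECCs of $\bar{f}$.

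If the supports intersect, Lemma \ref{longorbits} gives an orbit of $\bar{f}$ of length 20 in the intersection, plus some further non-trivial orbit. I claim the second orbit has length 30. Because $A_5$ is simple, $\bar{g} \cap \bar{h}$ (as subgroups of $\aut(M)$) is either trivial or all of $\bar{g}$; the latter combined with $\comm(\bar{g}, \bar{h})$ would force $A_5$ to be abelian, so the intersection is trivial and $\langle \bar{g}, \bar{h} \rangle \cong A_5 \times A_5$. On each orbit $\mathcal{X} \subseteq \supp(\bar{g}) \cap \supp(\bar{h})$ of this group the action is the coset action $(A_5 \times A_5)/K$, and Lemma \ref{no60} forces both $K \cap (A_5 \times 1)$ and $K \cap (1 \times A_5)$ to be non-trivial (otherwise $\bar{g}$ or $\bar{h}$ would have an orbit of length 60 on $\mathcal{X}$). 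A Goursat-style classification of such $K$, combined with the presence of a length-20 diagonal orbit, pins the remaining diagonal orbit length down to 30.

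The main obstacle is this last upgrade: Shelah--Truss's Lemma 3.5 as quoted guarantees only ``some orbit of length other than 20'', not specifically 30; it is the CFPO-specific constraint from Lemma \ref{no60} that, via Goursat, eliminates the other candidate orbit lengths. The secondary subtlety lies in the disjoint-support case, where I must use the minimality clause of the ECC definition carefully to rule out $\bar{f}$-ECCs straddling the two disjoint supports.
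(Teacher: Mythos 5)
Your forward direction (indecomposability implies a single ECC and all orbits of length less than $30$, via Lemma~\ref{RestrictionSubgroups}, Lemma~\ref{no60} and Lemma~\ref{30splits}) and the disjoint-support half of the converse both match the paper's argument. The genuine gap is in the intersecting-support case.

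There you try to upgrade the Shelah--Truss conclusion ``some non-trivial orbit of length other than $20$'' to ``an orbit of length exactly $30$'' via a Goursat analysis of the subgroup $K \leq A_5 \times A_5$, but you never carry this out, and it is doubtful it can be made to work: the diagonal's orbit lengths on $(A_5\times A_5)/K$ a priori lie in $\{1,5,6,10,12,15,20,30,60\}$, and excluding $60$ with Lemma~\ref{no60} does not by itself force the ``other'' orbit to have length $30$ rather than $15$, $12$, $10$, $6$ or $5$. The paper does not determine the length at all. Instead it uses the following observation, which is what you are missing: the orbits that Lemma~\ref{longorbits} produces are orbits of $\bar{f}$ on the set $\mathcal{X}$ of translates of the connected component $X$ (the lemma's statement is loosely worded on this, but its proof applies Shelah--Truss to the induced action on $\mathcal{X}$), and by condition 4 of Definition~\ref{dfn:ECC} an extended connected component of $\supp(\bar{f})$ is a \emph{minimal} $\bar{f}$-invariant collection of connected components, so two $\bar{f}$-orbits of connected components of different sizes necessarily lie in distinct ECCs. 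The orbit of length $20$ and the orbit of some other length therefore already give at least two ECCs of $\supp(\bar{f})$, contradicting the single-ECC hypothesis; no constraint on the second orbit's length is needed. Replacing your Goursat paragraph with this observation closes the gap and removes the ``main obstacle'' you flagged.
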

\begin{proof}
First we prove that if $\supp(\bar{f})$ has exactly one extended connected component and every orbit has less than 30 members then $\aut(M) \models \indec(\bar{f})$ by contradiction.  Let $\bar{g}$ and $\bar{h}$ witness the fact that $\bar{f}$ does not satisfy $\indec$, i.e. $\bar{f} = \bar{g} * \bar{h}$ and $\aut(M) \models \comm(\bar{g},\bar{h})$.

If $\supp(\bar{g}) \cap \supp(\bar{h}) = \emptyset$ then $\bar{f}$ fixes $\supp(\bar{g})$ and $\supp(\bar{h})$ setwise, and hence $\supp(\bar{g})$ and $\supp(\bar{h})$ lie in different ECCs of $\supp(\bar{f})$.

If $\supp(\bar{g}) \cap \supp(\bar{h}) \not= \emptyset$ then Lemma \ref{longorbits} shows that $\bar{g} * \bar{h}$ has an orbit of length at least 20.  If $\bar{g} * \bar{h}$ has an orbit of length 20 then there is also another orbit of length other than 20.  Since the length is other than 20, this other orbit cannot lie in the same ECC as the orbit of length 20.

Therefore if $\supp(\bar{f})$ has exactly one extended connected component and every orbit has less than 30 members then $\aut(M) \models \indec(\bar{f})$.  We now turn our attention to the other direction, which we also do by contradiction.

Suppose $\supp(\bar{f})$ has multiple extended connected components.  We let $X$ be one of these extended connected components and consider $\bar{f}|^{X}$ and $\bar{f}|^{M\setminus X}$.  These two both satisfy $\alt{5}$ (by Lemma \ref{RestrictionSubgroups}) and their supports are disjoint, so they satisfy $\comm$.  Finally $\bar{f}|^{X} * \bar{f}|^{M\setminus X} = \bar{f}$, showing that $\bar{f}|^{X}$ and $\bar{f}|^{M\setminus X}$ witness the fact that $\bar{f}$ does not satisfy $\indec$.

Lemma \ref{no60} shows that $\bar{f}$ cannot have an orbit of length 60.  Lemma \ref{30splits} shows that if $\bar{f}$ has an orbit of length 30 then $\aut(M) \models \neg\indec(\bar{f})$.
\end{proof}

\begin{lemma}\label{lemma:disjbehaves}
If $\aut(M) \models disj(\bar{f},\bar{g})$ then $\supp(\bar{f}) \cap \supp(\bar{g}) = \emptyset$.
\end{lemma}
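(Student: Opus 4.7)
The plan is to argue by contradiction: assume $\aut(M) \models \disj(\bar{f},\bar{g})$ but $\supp(\bar{f}) \cap \supp(\bar{g}) \neq \emptyset$. Unpacking $\disj$, both $\bar{f}$ and $\bar{g}$ are indecomposable and commute element-wise, so by Proposition \ref{prop:indec} each of their supports is a single extended connected component with all orbits of length strictly less than $30$.

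The first step is to apply Lemma \ref{longorbits}, which uses exactly the commutation plus overlap assumptions: this yields an orbit of $\bar{f}*\bar{g}$ of length $20$ sitting inside $\supp(\bar{f})\cap\supp(\bar{g})$, and moreover a non-trivial orbit of $\bar{f}*\bar{g}$ of some length different from $20$. Alongside this I would observe that $\bar{f}*\bar{g}$ itself satisfies $\alt{5}$: the positive relations $f_i f_j = f_k$ lift via commutation to $(f_i g_i)(f_j g_j) = f_i f_j g_i g_j = f_k g_k$, and the negative relations follow by simplicity of $A_5$ together with the non-triviality witnessed by the length-$20$ orbit, as in the closing paragraph of Lemma \ref{RestrictionSubgroups}.

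Next, I would use Lemma \ref{noflipping} to line up the unique ECCs: writing $X$ for the ECC of $\bar{f}$ and $Y$ for that of $\bar{g}$, the lemma forces one to sit inside the other; take $X \subseteq Y$ without loss of generality. Commutation moreover forces each $g_j$ to permute the ECCs of $\supp(\bar{f})$, and since there is only one, each $g_j$ fixes $X$ setwise; this setwise preservation, combined with Lemma \ref{lemma:ECC}'s singleton-path guarantee, is precisely what lets the restriction operator $\cdot|^X$ produce honest automorphisms.

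The main obstacle, and the final step, is converting the two-orbit structure of $\bar{f}*\bar{g}$ into an explicit decomposition contradicting $\indec(\bar{f})$ (or $\indec(\bar{g})$). My plan is to use the partition of $\supp(\bar{f}*\bar{g})$ induced by the two orbits of distinct length, together with the setwise preservation of $X$ by $\bar{g}$, to extract via Lemma \ref{RestrictionSubgroups} two commuting tuples $\bar{g}', \bar{h}'$ satisfying $\alt{5}$ whose product is $\bar{f}$; this directly contradicts $\indec(\bar{f})$. I expect the technical delicacy to lie in verifying that the partition induced by the $\bar{f}*\bar{g}$-orbits actually refines the ECC decomposition of $\supp(\bar{f})$, which is where the interplay with the singleton-path statement of Lemma \ref{lemma:ECC} and the rigidity of transitive $A_5$-actions on orbits of differing lengths will be crucial.
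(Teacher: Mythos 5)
Your plan diverges from the paper's in a way that opens a real gap at the final step, and it also misses one of the two cases the paper must handle.

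First, once Lemma \ref{noflipping} gives you containment of supports, say $\supp(\bar{f}) \subseteq \supp(\bar{g})$, you must treat proper containment and equality separately, because they fail for different reasons. When $\supp(\bar{f}) \subsetneq \supp(\bar{g})$, the paper's argument does not contradict indecomposability at all: it uses the singleton connecting point $x_f$ and preservation of path-betweenness to show that some $g_i$ pushes any $z \in \supp(\bar{f})$ outside $\supp(\bar{f})$, whence $g_i(z) = f_k g_i(z) = g_i f_k(z) \neq g_i(z)$, contradicting $\comm(\bar f,\bar g)$. Your proposal skips this case entirely.

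Second, and more fundamentally, the closing step cannot go through as written. You want to partition $\supp(\bar{f}*\bar{g})$ by the two orbits of distinct lengths and feed that into Lemma \ref{RestrictionSubgroups} to split $\bar{f}$. But Lemma \ref{RestrictionSubgroups} only applies to a partition into collections of \emph{extended connected components}, and since $\indec(\bar{f})$ holds, Proposition \ref{prop:indec} tells you $\supp(\bar{f})$ has exactly one ECC: there is nothing to split. Worse, the two orbits promised by Lemma \ref{longorbits} both live inside a single orbit of $\langle\bar{f},\bar{g}\rangle$, hence inside one ECC of $\supp(\bar{f}*\bar{g})$, so they cannot induce a partition into ECCs. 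And even if they could, restricting $\bar{f}*\bar{g}$ would produce two tuples whose product is $\bar{f}*\bar{g}$, not $\bar{f}$; we are given $\indec(\bar{f})$, not $\indec(\bar{f}*\bar{g})$, so no contradiction materialises. The paper's route in the equal-support case is different and does not pass through a decomposition: it observes that $\bar{f}$, $\bar{g}$ and $\bar{f}*\bar{g}$ all act on the same antichain of immediate successors or predecessors of $x_f$, deduces from Lemma \ref{longorbits} that this antichain has more than $20$ elements, hence at least $30$ (by the available $A_5$-orbit sizes), and then uses the transitivity of $\bar{f}$ on that antichain to contradict the $<30$ bound of Proposition \ref{prop:indec}. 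You should replace your final paragraph with this orbit-size argument and add the path-based argument for proper containment.
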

\begin{proof}
Suppose $\supp(\bar{f}) \cap \supp(\bar{g}) \not= \emptyset$.  By Lemma \ref{noflipping} either
$$\supp(\bar{f}) \subseteq \supp(\bar{g})\:\textnormal{or}\: \supp(\bar{f}) \subseteq \supp(\bar{g})$$
Now assume that $\supp(\bar{f}) \subsetneqq \supp(\bar{g})$ and let $z \in \supp(\bar{f})$.

$\path{\supp(\bar{f}),M \setminus \supp(\bar{f})}$ is a singleton, as $\aut(M) \models \indec(\bar{f})$.  Let
$$\lbrace x_f \rbrace := \path{\supp(\bar{f}),M \setminus \supp(\bar{f})}$$
$x_f \not\in \supp(\bar{f})$, but since $\supp(\bar{f}) \subsetneq \supp(\bar{g})$, we know that $x_f \in \supp(\bar{g})$.

Let $g_i \in \bar{g}$.  By definition $x_f \in \path{g_i^{-1}(x_f),z}$.  Since paths are preserved by automorphisms, this translates to
$$g_i(x_f) \in \path{x_f,g_i(z)}$$
Thus if $g_i \not= \mathrm{id}$ then $g_i(z) \not\in \supp(\bar{f})$, i.e. $f_j g_i(z)= g_i(z)$ for all $j$, but since $z \in \supp(\bar{f})$ there is a $k$ such that $f_k (z) \not=z$.  This is depicted in Figure 25.
$$
\begin{array}{c c l}
g_i(z) & = & f_k g_i (z) \\
 & = & g_i f_k(z) \\
 & \not= & g_i(z)
\end{array}
$$
This is a contradiction.  Therefore if $\supp(\bar{f}) \cap \supp(\bar{g}) \not= \emptyset$ then $\supp(\bar{f}) = \supp(\bar{g})$.

\begin{figure}[ht]\label{fig:disj}
\begin{center}
\begin{tikzpicture}[scale=0.15]
\draw[dotted] (0,20) -- (30,0) -- (60,20);
\fill[white] (30,20) ellipse (30 and 5);
\draw[dotted] (30,20) ellipse (30 and 5);
\fill[dotted] (30,0) circle (0.5);

\fill[white] (45,20) -- (35,30) -- (55,30) -- (45,20);
\draw[dotted] (35,30) -- (45,20) -- (55,30);
\fill[white] (45,30) ellipse (10 and 2);
\draw[dotted] (45,30) ellipse (10 and 2);
\fill (45,20) circle (0.5);

\fill[white] (15,20) -- (5,30) -- (25,30) -- (15,20);
\draw[dotted] (5,30) -- (15,20) -- (25,30);
\fill[white] (15,30) ellipse (10 and 2);
\draw[dotted] (15,30) ellipse (10 and 2);
\fill (15,20) circle (0.5);
\fill (20,30) circle (0.5);

\fill (40,30) circle (0.5);
\fill (50,30) circle (0.5);

\draw[anchor=east] (20,30) node {$g_i(z)$};
\draw[anchor=east] (40,30) node {$z$};
\draw[anchor=east] (50,30) node {$f_k(z)$};

\draw (30,10) node {$\supp(\bar{g})$};
\draw[anchor=north] (45,19) node {$x_f$};
\draw[anchor=north] (15,20) node {$g_i(x_f)$};
\draw (45,26) node {$\supp(\bar{f})$};

\draw[->] (40,30) .. controls (35,35) and (25,35) .. (20.5,30.5); 
\draw[->] (40,30) .. controls (44,34) and (46,34) .. (49.5,30.5);

\draw[anchor=south] (30,33.5) node {$g_i$};
\draw[anchor=south] (45,33) node {$f_k$};
\end{tikzpicture}
\end{center}
\caption{$\supp(\bar{f}) \subsetneq \supp(\bar{g})$}
\end{figure}
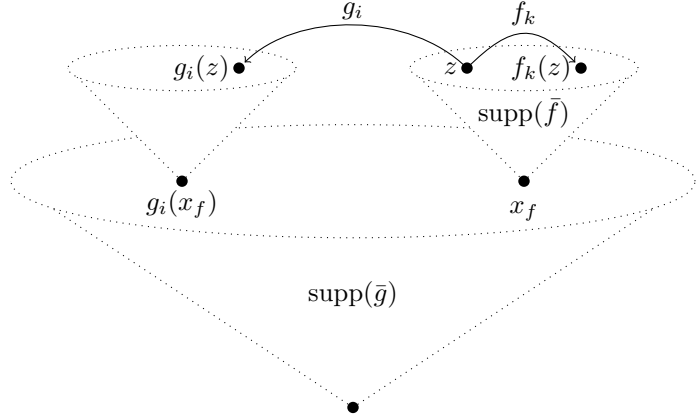

Suppose that $\supp(\bar{f})=\supp(\bar{g})$.  Again, $\path{\supp(\bar{f}),M \setminus \supp(\bar{f})}$ is a singleton, as $\aut(M) \models \indec(\bar{f})$ so we let
$$\lbrace x_f \rbrace := \path{\supp(\bar{f}),M \setminus \supp(\bar{f})}$$

Both $\bar{f}$ and $\bar{g}$ must act transitively on the same antichain of immediate successors or predecessors of $x_f$, which $\bar{f} * \bar{g}$ must also act on.  Since $\aut(M) \models \indec(\bar{f})$ and $\aut(M) \models \indec(\bar{g})$, Proposition \ref{prop:indec} shows that this antichain must have less than 30 members, but Lemma \ref{longorbits} showed that $\bar{f} * \bar{g}$ must have an orbit of at least 20 members.

Lemma \ref{longorbits} also showed that if $\bar{f} * \bar{g}$ has an orbit of length 20 then there was another orbit.  Therefore $\bar{f}$ acts transitively on a set with strictly more than 20 elements, and hence at least 30, which contradicts Proposition \ref{prop:indec}.

Therefore $\supp(\bar{f}) \cap \supp(\bar{g}) = \emptyset$.
\end{proof}

\begin{lemma}\label{FormalSubsetsEq}
Recall that $[\supp(\bar{f}) \sqsubseteq \supp(\bar{g})]$ is the formula
$$
\begin{array}{l c}
\indec(\bar{f}) \wedge \indec(\bar{g}) \wedge \neg \disj(\bar{f},\bar{g}) & \wedge \\
\forall \phi [ \disj( \bar{g}^\phi,\bar{g}) \rightarrow \disj( \bar{f}^\phi , \bar{f} )] & \wedge \\
\forall \phi  ( \bar{g}^\phi \not= \bar{g} \rightarrow \bar{f}^\phi = \bar{f}) & \wedge \\
\end{array}
$$
If $\bar{f}$ and $\bar{g}$ satisfy this formula then the support of $\bar{g}$ is contained in the support of $\bar{f}$.
\end{lemma}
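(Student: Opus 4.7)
The plan is to argue by contradiction: assume that the three clauses of the formula $[\supp(\bar{f}) \sqsubseteq \supp(\bar{g})]$ hold but $\supp(\bar{g}) \not\subseteq \supp(\bar{f})$, and construct $\phi \in \aut(M)$ that centralises the tuple $\bar{f}$ while failing to centralise $\bar{g}$. This violates the third clause $\neg \exists \phi (\bar{f}^\phi = \bar{f} \wedge \bar{g}^\phi \neq \bar{g})$, giving the contradiction.

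First I would unpack the formula. By $\indec(\bar{f}) \wedge \indec(\bar{g})$ and Proposition~\ref{prop:indec}, each of $\supp(\bar{f})$ and $\supp(\bar{g})$ is a single extended connected component with every orbit of length strictly less than $30$. Lemma~\ref{lemma:ECC} gives each support a singleton path to its complement, and I write $\{x_f\} := \path{\supp(\bar{f}), M \setminus \supp(\bar{f})}$. From $\neg\disj(\bar{f},\bar{g})$ combined with the two $\indec$ clauses we deduce $\neg\comm(\bar{f},\bar{g})$, and then by the contrapositive of the ``disjoint supports commute'' lemma (the unlabelled lemma following Lemma~\ref{Lemma:A5Behaves}), $\supp(\bar{f}) \cap \supp(\bar{g}) \neq \emptyset$.

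Now, assuming $\supp(\bar{g}) \not\subseteq \supp(\bar{f})$, pick $y \in \supp(\bar{g}) \setminus \supp(\bar{f})$. Because $\supp(\bar{f})$ is an EEC attached to $x_f$, the point $y$ lies in an extended cone at $x_f$ disjoint from $\supp(\bar{f})$. Using cone-transitivity of $M$, I would build $\phi \in \aut(M)$ which is pointwise the identity on $\supp(\bar{f}) \cup \{x_f\}$ and acts non-trivially on the extended cone through $y$, moving $y$. Since $\supp(\bar{f})$ is $\bar{f}$-invariant and $\phi|_{\supp(\bar{f})} = \mathrm{id}$, a case split on whether a point lies in $\supp(\bar{f})$ gives $\phi f_i = f_i \phi$ for every $i$, so $\bar{f}^\phi = \bar{f}$. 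The choice of $\phi$ on the cone through $y$ is then arranged so that $\phi$ fails to commute with some $g_j \in \bar{g}$ that moves $y$, forcing $\bar{g}^\phi \neq \bar{g}$ and closing the contradiction.

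The principal obstacle is constructing $\phi$ with both properties simultaneously. Cone-transitivity is precisely what is needed to produce automorphisms fixing $x_f$ and the cones at $x_f$ intersecting $\supp(\bar{f})$ while acting non-trivially on cones disjoint from $\supp(\bar{f})$; the delicate step is ensuring that $\phi$'s action on the cone containing $y$ is not inadvertently absorbed into $C(\bar{g})$. Here the triviality of the centre of $A_5$, which makes $\bar{g}$'s action on any nontrivial orbit faithful, is what lets us avoid this accident: any sufficiently rich modification of $\phi$ on the cone through $y$ must fail to commute with some non-identity $g_j$.
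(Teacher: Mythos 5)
Your high-level plan (contradict the third clause, in its original form from Definition~\ref{masterdfn}, i.e.\ $\neg\exists\phi(\bar f^\phi=\bar f\wedge\bar g^\phi\neq\bar g)$, by exhibiting a $\phi$ that centralises $\bar f$ but not $\bar g$) is a genuinely different, much leaner strategy than the paper's, which runs a five-way case analysis on the relative position of $\supp(\bar f)$, $\supp(\bar g)$, $x_f$ and $x_g$ and in various cases uses the second clause as well as the third. A single one-clause contradiction argument, if it worked, would be a simplification, and you correctly identify that fixing $\supp(\bar f)$ pointwise makes $\phi$ commute with each $f_i$ via the case split on whether a point lies in $\supp(\bar f)$.

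The gap, however, is real and you flag it yourself without closing it: you never actually produce the $\phi$. Cone-transitivity lets you move the extended cone of $x_f$ that contains $y$, but ``moving $y$'' is not the same as ``not centralising $\bar g$''. The centraliser of $\bar g$ in $\aut(M)$ is not small: on a regular $\bar g$-orbit the centraliser acting on the orbit is a whole copy of $A_5$ (right multiplication), and on imprimitive orbits it contains $N_{A_5}(H)/H$; on top of that $C(\bar g)$ acts freely on everything outside $\supp(\bar g)$. So there are plenty of $\phi$ that fix $\supp(\bar f)$ pointwise, move $y$, and nonetheless centralise $\bar g$. ``Triviality of the centre of $A_5$'' bounds the centraliser of $\bar g$ \emph{inside the image of $\bar g$ itself}, not inside $\aut(M)$ or even inside $\mathrm{Sym}(\bar g(y))$, so the appeal to it does not deliver what you need. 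To close the gap you must exhibit a concrete $\phi$ (say, one that swaps two extended cones of $x_g$ intersecting $\supp(\bar g)\setminus\supp(\bar f)$, fixing everything else, and then verify $g_j\phi\neq\phi g_j$ for a specific $g_j$ by evaluating both sides at $y$), and check that this $\phi$ is actually a total automorphism of $M$ and that the two swapped cones really are disjoint from $\supp(\bar f)$ — the latter is not automatic from $y\notin\supp(\bar f)$ alone, because the cones of $x_g$ are not the cones of $x_f$. You also quietly assume $y$ lies in an extended cone of $x_f$ disjoint from $\supp(\bar f)$; this fails if $y=x_f$, which is possible since $x_f\notin\supp(\bar f)$ but $x_f$ may well lie in $\supp(\bar g)$, and that subcase needs its own treatment. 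Finally, the paper's Case 4 (where $x_f=x_g$ and neither support contains the other) is handled by the paper with a separate construction swapping two cones of $x_g$; your single argument would have to be checked to cover that configuration, and as written it does not obviously do so because a $\phi$ fixing $\supp(\bar f)\cup\{x_f\}$ pointwise may fail to exist when the two supports fill up complementary cones at the same vertex in a way that leaves no freedom.
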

\begin{proof}
The two sentences
$$\forall \bar{f}, \bar{g} \left(
\begin{array}{c}
(\forall \phi [ \disj( \bar{g}^\phi,\bar{g}) \rightarrow \disj( \bar{f}^\phi , \bar{f} )]) \\
 \leftrightarrow \\
 (\neg \exists \phi [\neg \disj( \bar{f}^\phi,\bar{f}) \wedge \disj( \bar{g}^\phi , \bar{g} )])
\end{array}
\right) \;\textnormal{ and } \forall \bar{f}, \bar{g} \left(
\begin{array}{c}
(\forall \phi  ( \bar{g}^\phi \not= \bar{g} \rightarrow \bar{f}^\phi = \bar{f})) \\
 \leftrightarrow \\
(\neg \exists \phi (\bar{f}^\phi = \bar{f} \wedge \bar{g}^\phi \not= \bar{g}))
\end{array}
\right)$$
are tautologies, so the formula given here is equivalent to the one given in Definition \ref{masterdfn}.

Suppose that $\bar{f}$ and $\bar{g}$ are such that
$$\indec(\bar{f}) \wedge \indec(\bar{g}) \wedge \neg \disj(\bar{f},\bar{g})$$
This means that $\supp(\bar{f})$ and $\supp(\bar{g})$ each have exactly one ECC, which have a non-empty intersection.  We define
$$
\begin{array}{c}
x_f := \path{\supp(\bar{f}),M  \setminus \supp(\bar{f})} \\ 
x_g:= \path{\supp(\bar{g}),M \setminus \supp(\bar{g})}
\end{array}
$$

If $\supp(\bar{f}) = \supp(\bar{g})$ then $\supp(\bar{f}^\phi) = \supp(\bar{g}^\phi)$ for all $\phi \in \aut(M)$.   Therefore for all $\phi \in \aut(M)$
$$\aut(M) \models (\disj( \bar{f}^\phi,\bar{f}) \leftrightarrow \disj( \bar{g}^\phi , \bar{g} ))$$
and
$$\aut(M) \models (\bar{g}^\phi \not= \bar{g} \leftrightarrow \bar{f}^\phi = \bar{f} )$$
Thus $\aut(M) \models [\supp(\bar{f}) \sqsubseteq \supp(\bar{g})]$.

We now suppose that $\supp(\bar{f}) \neq \supp(\bar{g})$.  In Case 1 we consider $\supp(\bar{g}) \subsetneq \supp(\bar{f})$.  In Case 3 we consider $\supp(\bar{f}) \subsetneq \supp(\bar{g})$.  If neither $\supp(\bar{g}) \subsetneq \supp(\bar{f})$ nor $\supp(\bar{f}) \subsetneq \supp(\bar{g})$ then we are either in Case 2, where $x_f \not= x_g$, or Case 4 where $x_f=x_g$.

\begin{figure}[h]\label{fig:subsetcases}
\begin{center}
\begin{tikzpicture}[scale=0.05]
\draw (0,40) -- (20,00) -- (40,40);
\draw (10,40) -- (20,20) -- (30,40);
\draw (20,35) node {$\bar{g}$};
\draw (20,10) node {$\bar{f}$};
\draw (20,-5) node {Case $1$};

\fill (20,0) --+ (1,0) --+ (0,1) --+ (-1,0) --+ (0,-1) --+ (1,0);
\fill[white] (20,20) --+ (1,1) --+ (1,-1) --+ (-1,-1) --+ (-1,1) --+ (1,1);
\draw (20,20) + (1,1) --+ (1,-1) --+ (-1,-1) --+ (-1,1) --+ (1,1);

\draw (60,40) -- (80,10) -- (100,40);
\draw (60,0) -- (80,30) -- (100,0);
\draw (80,4) node {$\bar{f}$};
\draw (80,36) node {$\bar{g}$};
\draw (80,-5) node {Case $2$};

\fill (80,30) --+ (1,0) --+ (0,1) --+ (-1,0) --+ (0,-1) --+ (1,0);
\fill[white] (80,10)  --+ (1,1) --+ (1,-1) --+ (-1,-1) --+ (-1,1) --+ (1,1);
\draw (80,10)  + (1,1) --+ (1,-1) --+ (-1,-1) --+ (-1,1) --+ (1,1);

\draw (120,40) -- (140,00) -- (160,40);
\draw (130,40) -- (140,20) -- (150,40);
\draw (140,35) node {$\bar{f}$};
\draw (140,10) node {$\bar{g}$};
\draw (140,-5) node {Case $3$};

\fill (140,20) --+ (1,0) --+ (0,1) --+ (-1,0) --+ (0,-1) --+ (1,0);
\fill[white] (140,0)  --+ (1,1) --+ (1,-1) --+ (-1,-1) --+ (-1,1) --+ (1,1);
\draw (140,0)  + (1,1) --+ (1,-1) --+ (-1,-1) --+ (-1,1) --+ (1,1);

\draw (180,40) -- (200,00) -- (205,40);
\draw[color=gray] (195,40) -- (200,00) -- (220,40);
\draw (190,35) node {$\bar{f}$};
\draw[color=gray] (210,35) node {$\bar{g}$};
\draw (200,-5) node {Case $4$};

\fill[white] (200,0)  + (1,1) --+ (1,-1) --+ (-1,-1) --+ (-1,1) --+ (1,1);
\draw (200,0)  --+ (1,1) --+ (1,-1) --+ (-1,-1) --+ (-1,1) --+ (1,1);
\fill (200,0) --+ (1,0) --+ (0,1) --+ (-1,0) --+ (0,-1) --+ (1,0);

\fill[white] (150,-19)  + (1,1) --+ (1,-1) --+ (-1,-1) --+ (-1,1) --+ (1,1);
\draw (150,-19)  + (1,1) --+ (1,-1) --+ (-1,-1) --+ (-1,1) --+ (1,1);
\draw[anchor=west] (151,-20) node {$=x_g$};

\fill (50,-19) --+ (1,0) --+ (0,1) --+ (-1,0) --+ (0,-1) --+ (1,0);
\draw[anchor=west] (51,-20) node {$=x_f$};
\end{tikzpicture}
\end{center}
\caption{Cases of Lemma \ref{FormalSubsetsEq}}
\end{figure}

In Case $3$ we must prove that $\aut(M) \models [\supp(\bar{f}) \sqsubseteq \supp(\bar{g})]$, while in Cases $1$ and $2$, we must show that the converse holds.  Finally, in Case 4 we show that $\aut(M) \models [\supp(\bar{f}) \subseteq \supp(\bar{g})]$ if and only if $\supp(\bar{f})=\supp(\bar{g})$.

\paragraph*{Case 1}
Since $x_f$ is moved by $\bar{g}$ there is an $x_f '$ such that $x_f$ and $x_f'$ lie in the same $\bar{g}$-orbit and $x_f \not= x_f'$.  Let $\phi$ be an automorphism that switches $x_f$ and $x_f'$, but fixes anything that it does not have to move.  If $z \in \supp(\bar{f})$ then $\phi(z) \not\in \supp(\bar{f})$ and so $\disj(\bar{f}^\phi,\bar{f})$.  Since $\path{x_f,x_f'} \subseteq \supp(\bar{g})$ we know that $\supp(\bar{g}) = \supp(\bar{g}^\phi)$ and therefore $\neg \disj(\bar{g}^\phi, \bar{g})$

Thus $\phi$ witnesses the fact that $\bar{f}$ and $\bar{g}$ do not satisfy $[\supp(\bar{g}) \sqsubseteq \supp(\bar{f})]$.

\paragraph*{Case 2}
Let $x_f'$ be such that $x_f \in \path{x_g,x_f'}$ and $x_f \parallel x_f'$.  Since $X(n,m, \mathbb{Z})$ is 1-transitive there is an automorphism $\phi$ such that $\phi(x_f)=x_f'$.  We know that $\disj(\bar{f}^\phi,\bar{f})$ as
$$\path{\supp(\phi*\bar{f}),\supp(\bar{f})}=\path{f,f'}$$
which cannot be empty, as $x_f \parallel x_f'$.  Since $x_f \in \path{x_g,x_f'}$ and $x_f \in \supp(\bar{g})$ the support of $\bar{g}$ must contain $x_f'$.  However $x_f'$ is clearly contained in $\supp(\phi*\bar{g})$, so $\neg \disj(\phi*\bar{g}, \bar{g})$.

Thus $\phi$ witnesses the fact that $\bar{f}$ and $\bar{g}$ do not satisfy $[\supp(\bar{g}) \sqsubseteq \supp(\bar{f})]$.

\paragraph*{Case 3}
For a contradiction, assume that
$$\aut(M) \models \exists \phi [\disj( \bar{f}^\phi ,\bar{f}) \wedge \neg\disj( \bar{g}^\phi , \bar{g} )]$$
and let $\phi$ witness this.  Since $\disj( \bar{f}^\phi,\bar{f})$ holds, and $\supp(\bar{g})$ is contained in $\supp(\bar{f})$, we know that $\disj( \bar{g}^\phi,\bar{g})$, giving a contradiction.

Now assume that 
$$\aut(M) \models \exists \phi (\bar{f}^\phi = \bar{f} \wedge \bar{g}^\phi \not= \bar{g}) $$
Let $C_0, C_1$ be two of the cones of $x_f$ that are contained in the support of $\bar{f}$ and let $f_i \in \bar{f}$ map $C_0$ to $C_1$.  Since $\bar{g}^\phi \not= \bar{g}$, there is an $x \in \supp(\bar{g})$ such that $\phi(x) \not= x$.  We suppose without loss of generality that $x \in C_0$.

If $\phi(x) \not\in C_1$ then $f_i^\phi$ will map $x$ to $f_i\phi (x) \not= f_i(x)$ and so $\bar{f}^\phi \not= \bar{f}$.  If $\phi(x) \in C_1$ then conjugation by $\phi$ will at least switch the roles $C_0$ and $C_1$, and so $\bar{f}^\phi \not= \bar{f}$.

\paragraph*{Case 4}
In this case, $x_f=x_g$.  Let $C^f_0, \ldots $ be the cones of $f$ that are contained in $\bar{f}$, and let $C^g_0, \ldots $ be the cones of $x_g$ that are contained in $\bar{g}$.  We may pick our indices such that $C^f_i \in \supp(\bar{f}) \cap \supp(\bar{g})$ if and only if $C^g_i \in \supp(\bar{f}) \cap \supp(\bar{g})$.

Assume that only one $C^f_i$ is not in the intersection of the supports, and assume without loss of generality that this is $C^f_0$.  Let $\phi \in \aut(M)$ be such that $\supp(\phi) \subsetneq C^g_0$ and.  Then
$$\aut(M) \models (\bar{f}^\phi = \bar{f} \wedge \bar{g}^\phi \not= \bar{g})$$
showing that $\bar{f}$ and $\bar{g}$ do not satisfy $[\supp(\bar{f}) \sqsubseteq \supp(\bar{g})]$.

Now we assume that more that one $C^f_i$ is not in the intersection of the supports, without loss of generality $C^f_0$ and $C^f_1$.  Let $\phi \in \aut(M)$ be such that $\phi$ swaps $C^g_0$ and $C^g_1$ and fixes everything else point-wise.  Since $\phi$ fixes $\supp(\bar{f})$ point-wise, $\aut(M) \models \bar{f}^\phi = \bar{f}$.

Now consider a elements of $\bar{g}$ which switches $C^g_0$ and $C^G_2$.  The corresponding elements of $\bar{g}^\phi$ will switch $C^g_1$ and $C^g_2$, and so $\aut(M) \models \bar{g}^\phi \not= \bar{g}$.
\end{proof}

\begin{cor}\label{FormalSubsets}
Recall that $[\supp(\bar{g}) \sqsubset \supp(\bar{f})]$ is the formula
$$[\supp(\bar{g}) \sqsubseteq \supp(\bar{f})] \wedge \neg [\supp(\bar{f}) \sqsubseteq \supp(\bar{g})]$$
$\aut(M) \models [\supp(\bar{g}) \sqsubset \supp(\bar{f})]$ if and only if $\supp(\bar{g})$ is properly contained in $\supp(\bar{f})$.
\end{cor}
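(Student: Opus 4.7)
The plan is to derive the corollary directly from Lemma~\ref{FormalSubsetsEq} by unpacking the definition of $\sqsubset$. First I would establish that, although Lemma~\ref{FormalSubsetsEq} is recorded as a one-way implication, its four-case proof actually pins down a full biconditional: under $\indec(\bar{f}) \wedge \indec(\bar{g}) \wedge \neg \disj(\bar{f},\bar{g})$ the four cases exhaust every possible configuration of $\supp(\bar{f})$ and $\supp(\bar{g})$, and in each configuration the formula $[\supp(\bar{a}) \sqsubseteq \supp(\bar{b})]$ is shown to hold exactly when $\supp(\bar{a}) \subseteq \supp(\bar{b})$ (the equality sub-case handles the ``if'' direction when the supports coincide, Case~3 handles strict containment, and Cases~1,~2,~4 handle the remaining configurations negatively). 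So for the purposes of this corollary I may use
$$\aut(M) \models [\supp(\bar{a}) \sqsubseteq \supp(\bar{b})] \iff \supp(\bar{a}) \subseteq \supp(\bar{b}).$$

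With that biconditional in hand, the remainder is a short unpacking. For the forward direction, assume $\aut(M) \models [\supp(\bar{g}) \sqsubset \supp(\bar{f})]$, which by definition gives both $[\supp(\bar{g}) \sqsubseteq \supp(\bar{f})]$ and $\neg[\supp(\bar{f}) \sqsubseteq \supp(\bar{g})]$; applying the biconditional to each conjunct yields $\supp(\bar{g}) \subseteq \supp(\bar{f})$ together with $\supp(\bar{f}) \not\subseteq \supp(\bar{g})$, i.e.\ $\supp(\bar{g}) \subsetneq \supp(\bar{f})$. Conversely, if $\supp(\bar{g}) \subsetneq \supp(\bar{f})$, then $\indec(\bar{f})$, $\indec(\bar{g})$ and $\neg\disj(\bar{f},\bar{g})$ are in force (the last because the supports meet nontrivially, by Lemma~\ref{lemma:disjbehaves}), so the biconditional delivers $[\supp(\bar{g}) \sqsubseteq \supp(\bar{f})]$ and the negation of $[\supp(\bar{f}) \sqsubseteq \supp(\bar{g})]$, which is exactly $[\supp(\bar{g}) \sqsubset \supp(\bar{f})]$.

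The only real obstacle, and hence the step I would verify most carefully, is the claim that the case analysis in the proof of Lemma~\ref{FormalSubsetsEq} genuinely establishes both directions of the equivalence between the formula and set-theoretic containment, rather than only the one direction officially stated. Once that reading is justified by inspecting each case, the corollary follows immediately by propositional manipulation.
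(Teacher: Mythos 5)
Your reading of the situation is correct, and your proof takes the same route the paper intends: the corollary is stated without proof precisely because it is meant to fall out of Lemma~\ref{FormalSubsetsEq} by unpacking the definition of $\sqsubset$. Your key observation---that although the lemma is \emph{stated} as a one-way implication, its five-way case analysis (the equality case plus Cases~1--4) actually exhausts every possible configuration of two intersecting indecomposable supports and pins down the equivalence $\aut(M) \models [\supp(\bar{a}) \sqsubseteq \supp(\bar{b})] \iff \supp(\bar{a}) \subseteq \supp(\bar{b})$---is exactly the justification one needs, and you were right to flag it as the load-bearing step. (The paper's text around the lemma does in fact swap $\bar{f}$ and $\bar{g}$ inconsistently between the statement, Case~1, and Case~3; reconstructing the intended meaning, as you did, gives the ``sensible'' biconditional you wrote.)

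The one imprecision in your converse direction is the phrase ``$\indec(\bar{f})$, $\indec(\bar{g})$ \ldots are in force.'' Indecomposability is not a consequence of the set-theoretic containment $\supp(\bar{g}) \subsetneq \supp(\bar{f})$; it is an implicit standing hypothesis on the tuples under discussion, just as it is in Lemma~\ref{FormalSubsetsEq}. Without it, the converse is simply false (the $\indec$ conjuncts buried inside $\sqsubseteq$ would fail), so the corollary as literally stated should be read as quantifying only over indecomposable tuples. Your appeal to Lemma~\ref{lemma:disjbehaves} for $\neg\disj(\bar{f},\bar{g})$ is the right move, but you should make explicit that $\indec(\bar{f})$ and $\indec(\bar{g})$ are being \emph{assumed} rather than \emph{deduced}. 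With that small repair your argument is complete and matches the paper's intent.
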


\begin{dfn}
Let $\aut(M) \models \indec(\bar{f}) \wedge \indec(\bar{g})$ and let
$$
\begin{array}{c}
x_f:= \path{\supp(\bar{f}),M\setminus \supp(\bar{f})} \\
x_g:= \path{\supp(\bar{g}),M \setminus \supp(\bar{g})}
\end{array}
$$
We say that $\bar{f}$ and $\bar{g}$ have the \textbf{same direction}, or act in the same direction if
$$\exists y \in \supp(\bar{f}) \: (x_f < y) \Leftrightarrow \exists z \in \supp(\bar{g}) \: ( x_g < z )$$
We say that $\bar{f}$ and $\bar{g}$ have \textbf{different directions}, or act in different directions if
$$\exists y \in \supp(\bar{f}) \: x_f < y \Leftrightarrow \exists z \in \supp(\bar{g}) \: (x_g > z)$$
\end{dfn}

\begin{lemma}\label{SamePDBehaves}
Recall that $\mathrm{SamePD}(\bar{f},\bar{g})$ is the formula
$$\forall \bar{h} ( [\supp(\bar{h}) \sqsubset \supp(\bar{f})] \leftrightarrow [\supp(\bar{h}) \sqsubset \supp(\bar{g})])$$
Let
$$
\begin{array}{c}
\lbrace x_f \rbrace:= \path{\supp(\bar{f}),M\setminus \supp(\bar{f})} \\
\lbrace x_g \rbrace:= \path{\supp(\bar{g}),M \setminus \supp(\bar{g})}
\end{array}
$$
If
$$\aut(M) \models \mathrm{SamePD}(\bar{f},\bar{g})$$
then $f=g$ and $\bar{f}$ and $\bar{g}$ have the same direction.
\end{lemma}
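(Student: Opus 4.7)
The formula $\mathrm{SamePD}(\bar{f}, \bar{g})$ can be read, via Corollary \ref{FormalSubsets}, as the statement that for every indecomposable $\bar{h}$, $\supp(\bar{h}) \subsetneq \supp(\bar{f})$ if and only if $\supp(\bar{h}) \subsetneq \supp(\bar{g})$. My plan is to use this to prove $\supp(\bar{f}) = \supp(\bar{g})$; the conclusion of the lemma then follows from Lemma \ref{lemma:ECC}, since $x_f$ and $x_g$ are each determined by the support alone (so they coincide), and by the proof of Lemma \ref{lemma:ECC} each support lies in cones of its root in a single direction, so equal supports force the same direction.

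To establish $\supp(\bar{f}) = \supp(\bar{g})$, I first substitute $\bar{h} := \bar{f}$ into the equivalence: the left-hand side is vacuously false, so the right-hand side must also fail, yielding $\supp(\bar{f}) \not\subsetneq \supp(\bar{g})$; by symmetry, $\supp(\bar{g}) \not\subsetneq \supp(\bar{f})$. It remains to rule out the ``incomparable'' case, in which some $z \in \supp(\bar{f}) \setminus \supp(\bar{g})$ exists (fully disjoint supports included as a subcase). For such a $z$ I construct an indecomposable $\bar{h}$ with $\supp(\bar{h}) \subsetneq \supp(\bar{f})$ and $\supp(\bar{h}) \not\subseteq \supp(\bar{g})$, contradicting the equivalence.

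The construction of $\bar{h}$ uses cone transitivity and the hypothesis $ro\!\uparrow\!(M), ro\!\downarrow\!(M) \geq 5$. Say $\bar{f}$ acts upward; then $z$ lies in some upward cone $C \subseteq \supp(\bar{f})$ of $x_f$. I pick a point $y \in C$ with $y > x_f$ and let $\bar{h}$ permute five of the upward cones of $y$ lying inside $C$ via a fixed copy of $A_5$, fixing the rest of $M$ pointwise. Proposition \ref{prop:indec} then gives $\indec(\bar{h})$, since $\supp(\bar{h})$ is a single ECC and every orbit has size at most $5 < 30$, and $y \in \supp(\bar{f}) \setminus \supp(\bar{h})$ witnesses the proper containment $\supp(\bar{h}) \subsetneq \supp(\bar{f})$. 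I then must choose $y$ so that at least one of the five upward cones above it contains a point outside $\supp(\bar{g})$, forcing $\supp(\bar{h}) \not\subseteq \supp(\bar{g})$.

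The main obstacle is guaranteeing such a $y$ in every incomparable configuration. I split on the relative position of $x_f, x_g$ and the direction of $\bar{g}$: the configurations $x_f < x_g$ or $x_f > x_g$ with $\bar{g}$ in the same direction collapse either to nested supports (already excluded) or to fully disjoint ones (handled by any $\bar{h}$ inside $\supp(\bar{f})$); the case $x_f \parallel x_g$ with both directions upward forces the supports into distinct cones above the meet of $x_f$ and $x_g$, again giving disjointness; and in the remaining cases, where either $x_f = x_g$ or $\bar{g}$ acts in the opposite direction from $\bar{f}$, genuine overlap can occur, but the tree-like structure of $M$ lets me place $y$ inside a cone of $x_f$ whose entire upward region escapes $\supp(\bar{g})$, completing the contradiction.
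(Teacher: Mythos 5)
Your proof takes a genuinely different route from the paper's. The paper argues directly that $x_f = x_g$ and the directions agree, via a case analysis on the relative positions of $x_f$ and $x_g$ (nested, disjoint with $x_f \in \path{x_f,x_g}$ going in each direction, etc.), constructing in each case a witness $\bar{h}$ that separates $[\supp(\bar{h}) \sqsubset \supp(\bar{f})]$ from $[\supp(\bar{h}) \sqsubset \supp(\bar{g})]$. You instead prove the strictly stronger claim $\supp(\bar{f}) = \supp(\bar{g})$ and observe that the stated conclusion follows from Lemma \ref{lemma:ECC}, since the point $\path{\supp(\bar{f}), M\setminus\supp(\bar{f})}$ and the direction of the cones are functions of the support alone. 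This is a cleaner reduction: the $\bar{h} := \bar{f}$ and $\bar{h} := \bar{g}$ substitutions dispose of proper containment immediately, and the entire content is isolated in ruling out incomparability. (The stronger claim is indeed true, so nothing is lost.)

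The gap is in the incomparability step. You need, given $z \in \supp(\bar{f}) \setminus \supp(\bar{g})$, a point $y$ with at least five cones in a single direction, all inside a single cone of $x_f$ and with at least one escaping $\supp(\bar{g})$. The case split you offer to guarantee this does not hold up. First, CFPOs are not meet-semilattices, so appealing to ``the meet of $x_f$ and $x_g$'' in the $x_f \parallel x_g$ case is not legitimate: the path between $x_f$ and $x_g$ can alternate, and the claimed disjointness is not immediate. Second, the closing sentence (``the tree-like structure of $M$ lets me place $y$ inside a cone of $x_f$ whose entire upward region escapes $\supp(\bar{g})$'') asserts exactly what needs to be shown. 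What actually closes the gap, and makes the branching unnecessary, is the observation that since $\indec(\bar{g})$ holds, $\supp(\bar{g})$ is a union of whole cones of $x_g$; hence the cone of $x_g$ containing $z$ is entirely disjoint from $\supp(\bar{g})$. One may then take any cone of $z$ (in the direction of $\bar{f}$) that does not contain $x_g$; for any $w$ in such a cone one has $z \in \path{w,x_g}$, so $w$ lies in the same cone of $x_g$ as $z$ and hence outside $\supp(\bar{g})$. Choosing $y$ in that cone and letting $\bar{h}$ permute five of $y$'s cones in the direction of $\bar{f}$ gives $\supp(\bar{h}) \subsetneq \supp(\bar{f})$ with $\supp(\bar{h}) \cap \supp(\bar{g}) = \emptyset$, and $\indec(\bar{h})$ holds by Proposition \ref{prop:indec}, so $\mathrm{SamePD}$ fails. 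Replace the ad hoc case split by this uniform argument and the proof is complete.
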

\begin{proof}
Suppose $\aut(M) \models \mathrm{SamePD}(\bar{f},\bar{g})$

We will first show that $x_f=x_g$ by contradiction.  Suppose that $x_g \in \supp(\bar{f})$.  If $\supp(\bar{g}) \subset \supp(\bar{f})$ then $\bar{f}$ witnesses that $\bar{f}$ and $\bar{g}$ cannot satisfy $\mathrm{SamePD}(\bar{f},\bar{g})$.  If $\supp(\bar{g}) \not\subset \supp(\bar{f})$ then the supports of $\bar{f}$ and $\bar{g}$ are as in the pictures in Case 2 of Figure 26.

Let $\bar{h}$ be a tuple such that:
\begin{enumerate}
\item $\aut(M) \models \indec(\bar{h})$;
\item $\lbrace x_f \rbrace = \path{\supp(\bar{h}),M\setminus \supp(\bar{h})}$; and
\item $\bar{f}$ and $\bar{g}$ act in different directions.
\end{enumerate}
Then $\supp(\bar{h}) \subset \supp(\bar{g})$ and $\supp(\bar{h}) \cap \supp(\bar{f}) = \emptyset$, giving a contradiction. 

Now suppose that $x_g \not\in \supp(\bar{f})$ and $x_f \not\in \supp(\bar{g})$.  We consider two situations, where the point of $\path{x_f,x_g}$ next to $x_f$ is in the same direction as $\bar{f}$ or in the other direction (depicted in Figure 27).

\begin{figure}[h]\label{directionofpath}
\begin{center}
\begin{tikzpicture}[scale=0.12]
\draw (0,20) -- (10,0) -- (20,20);
\draw (25,10) -- (10,0) -- (25,-10);

\fill[white] (10,20) ellipse (10 and 3);
\draw (10,20) ellipse (10 and 3);

\fill (10,0) circle (0.3);
\fill (25,10) circle (0.3);
\fill (25,-10) circle (0.3);

\draw (10,20) node {$\bar{f}$};
\draw (6,0) node {$x_f$};
\draw (27,10) node {$x_1$};
\draw (27,-10) node {$x_2$};
\end{tikzpicture}
\end{center}
\caption{$\path{f,g}$ and the Direction of $\bar{f}$}
\end{figure}

This picture depicts both situations.  By ``the point of $\path{x_f,x_g}$ immediate to $f$ is in the same direction as $\bar{f}$'' we mean that $x_1 \in \path{f,g}$, while $x_2 \in \path{f,g}$ is the other situation we need to consider.

Suppose $x_1 \in \path{f,g}$ and let $\phi$ be an automorphism of $M$ which fixes $f$ and switches $x_1$ with a member of $\supp(\bar{f})$.  Then $\phi*\bar{f}$ witnesses the fact that $\bar{f}$ and $\bar{g}$ cannot satisfy $\mathrm{SamePD}(\bar{f},\bar{g})$.  If $x_2 \in \path{f,g}$ then any tuple that satisfies $\indec$, fixes $f$ and moves $x_2$ will do as a witness.

We know that if $\aut(M) \models \mathrm{SamePD}(\bar{f},\bar{g})$ then $x_f=x_g$.  If $\bar{f}$ and $\bar{g}$ act in different directions then we may pick any point in $\supp(\bar{g})$ and any tuple that fixes that point and moves $x_f$ to find our counter-example.

It remains to show that if $\bar{f}$ and $\bar{g}$ fix the same point and have the same direction then they satisfy $\mathrm{SamePD}$.  Assume without loss of generality that $\bar{f}$ and $\bar{g}$ act on the successors of $x_f$.  Let $\bar{h}$ be any tuple such that
$$[\supp(\bar{f}) \sqsubset \supp(\bar{h})]$$
This means that $\bar{h}$ moves $x_f$ and all its successors, and therefore $\supp(\bar{g})$ contains the support of $\bar{g}$, and so $\bar{h}$ satisfies $[\supp(\bar{f}) \sqsubset \supp(\bar{h})]$.
\end{proof}

\begin{lemma}
Recall that $\mathrm{RepPoint}(\bar{f}_0,\bar{f}_1)$ is the formula
$$
\disj(\bar{f}_0,\bar{f}_1) \wedge \forall \bar{g} \exists \bar{h} ( \neg \disj(\bar{g},\bar{h}) \wedge ( \mathrm{SamePD}(\bar{f_0},\bar{h})  \vee \mathrm{SamePD}(\bar{f_1},\bar{h}) ) )
$$
Let
$$
\begin{array}{c}
\lbrace x_0 \rbrace:= \path{\supp(\bar{f}_0),M) \setminus \supp(\bar{f}_0)} \\
\lbrace x_1 \rbrace:= \path{\supp(\bar{f}_1),M) \setminus \supp(\bar{f}_1)}
\end{array}
$$
Then
$$\aut(M) \models \mathrm{RepPoint}(\bar{f}_0,\bar{f}_1)$$
if and only if $x_0= x_1$ and $\bar{f}_0$ and $\bar{f}_1$ act in different directions.
\end{lemma}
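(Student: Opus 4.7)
For $(\Leftarrow)$, assume $x_0 = x_1 =: x$ and $\bar{f}_0,\bar{f}_1$ act in opposite directions (WLOG $\bar{f}_0$ upwards, $\bar{f}_1$ downwards). Since each $\bar{f}_i$ is indec (needed for $x_i$ to be a singleton) and the two supports lie in upwards and downwards cones of $x$ respectively, they are disjoint, so $\disj(\bar{f}_0,\bar{f}_1)$ holds because disjoint supports force $\conj$. For the $\forall \bar{g}\,\exists\bar{h}$ clause I would split on $\bar{g}$. If $\bar{g}$ fails $\indec$, then $\bar{h} = \bar{f}_0$ works since one conjunct of $\disj(\bar{g},\bar{h})$ fails. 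If $\bar{g}$ is indec with branch point $x$ itself, take $\bar{h} = \bar{g}$: $\mathrm{SamePD}$ with the matching $\bar{f}_i$ holds by Lemma \ref{SamePDBehaves} and $\neg\disj(\bar{g},\bar{g})$ is forced by the non-abelianness of $A_5$. Finally, if $\bar{g}$ is indec with branch point $x_g \neq x$, pick $y \in \supp(\bar{g})\setminus\{x\}$ and let $C$ be the cone of $x$ containing $y$; by cone transitivity and $ro\uparrow(M), ro\downarrow(M) \geq 5$, build an indec $\bar{h}$ anchored at $x$ in the direction of $C$ (an $A_5$-tuple permuting five cones of $x$ including $C$) with $y\in\supp(\bar{h})$. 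Then $\mathrm{SamePD}$ with the matching $\bar{f}_i$ holds, and $y\in\supp(\bar{h})\cap\supp(\bar{g})$ forces $\neg\disj$ via Lemma \ref{lemma:disjbehaves}.

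For $(\Rightarrow)$, assume $\mathrm{RepPoint}(\bar{f}_0,\bar{f}_1)$. The first conjunct with Lemma \ref{lemma:disjbehaves} gives that both $\bar{f}_i$ are indec and $\supp(\bar{f}_0)\cap\supp(\bar{f}_1) = \emptyset$, so $x_0, x_1$ are well-defined. I argue by contradiction, ruling out the two ways the conclusion can fail. If $x_0 = x_1 =: x$ with equal directions (WLOG both upwards), use cone transitivity and $ro\downarrow(M)\geq 5$ to build an indec $\bar{g}$ acting at $x$ downwards; Lemma \ref{SamePDBehaves} then forces every candidate $\bar{h}$ to act at $x$ upwards, so $\supp(\bar{h})$ sits in upwards cones of $x$ and is disjoint from $\supp(\bar{g})$. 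Indec on both sides then yields $\disj(\bar{g},\bar{h})$, contradicting $\neg\disj$.

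The main obstacle is the case $x_0 \neq x_1$. Every candidate $\bar{h}$ must be anchored at $x_0$ in direction $d_0$ or at $x_1$ in direction $d_1$, so $\supp(\bar{h}) \subseteq R := (d_0\text{-region of }x_0) \cup (d_1\text{-region of }x_1)$. The plan is to produce an indec $\bar{g}$ with $\supp(\bar{g}) \cap R = \emptyset$, after which the same disjointness-plus-indec argument delivers $\disj(\bar{g},\bar{h})$ and contradicts $\neg\disj$. Construction of $\bar{g}$ proceeds by case analysis on the relative position of $x_0,x_1$ (comparable or incomparable) and on $(d_0,d_1)$: in each case one identifies a branch point $z \in M \setminus R$ and, by cone transitivity, anchors an indec $A_5$-tuple at $z$ in a suitable direction. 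The genuinely subtle sub-case is the one (for instance $x_0 < x_1$ with $d_0$ upwards and $d_1$ downwards) in which $R$ covers all of $M$; there no external $\bar{g}$ exists and the contradiction must be extracted differently, by showing that in this geometric configuration the $A_5$-action on the five chosen cones must move some point on $\path{x_0, x_1}$ or on a side cone attached to it, forcing $\supp(\bar{f}_0)$ and $\supp(\bar{f}_1)$ to overlap and thereby contradicting $\disj(\bar{f}_0,\bar{f}_1)$ via Lemma \ref{lemma:disjbehaves}.
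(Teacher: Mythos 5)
Your $(\Leftarrow)$ direction and the $x_0 = x_1$, same-direction branch of $(\Rightarrow)$ are sound and track the paper's argument (the paper is terser but uses the same ideas). The genuine gap is exactly where you suspected: the sub-case with $x_0, x_1$ comparable and $\bar{f}_0,\bar{f}_1$ pointing towards each other. But your proposed rescue --- that in this configuration $\disj(\bar{f}_0,\bar{f}_1)$ must fail --- is incorrect. Take $x_0 < x_1$, let $C^*$ be the upward cone of $x_0$ containing $x_1$ and $D^*$ the downward cone of $x_1$ containing $x_0$. If $ro\uparrow(M), ro\downarrow(M) \geq 6$ one may anchor $\bar{f}_0$ on five upward cones of $x_0$ that avoid $C^*$, and $\bar{f}_1$ on five downward cones of $x_1$ that avoid $D^*$. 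A point $t$ lying both in some upward cone of $x_0$ other than $C^*$ and in some downward cone of $x_1$ other than $D^*$ would satisfy $x_0 \in \path{t,x_1}$ and $x_1 \in \path{t,x_0}$ simultaneously, which is impossible; so these supports are disjoint and $\disj(\bar{f}_0,\bar{f}_1)$ genuinely holds. Lemma \ref{lemma:disjbehaves} then gives you nothing, and the contradiction you hoped to extract does not materialise.

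You are, however, right to isolate this sub-case as the hard one: the paper's own proof of $(\Rightarrow)$ does not cover it either. The paper picks $y$ branching off $\path{x_0,x_1}$ at an interior point $z$ and takes $\bar{g}$ anchored at $y$ pointing away from $\lbrace x_0,x_1 \rbrace$. But $z$, $y$ and $\supp(\bar{g})$ all lie inside $C^*$, which is an upward cone of $x_0$; so an indec tuple $\bar{h}$ permuting five upward cones of $x_0$ including $C^*$ satisfies $\mathrm{SamePD}(\bar{f}_0,\bar{h})$ and has $\supp(\bar{h}) \supseteq C^* \supseteq \supp(\bar{g})$, giving $\neg\disj(\bar{g},\bar{h})$. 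Hence the paper's $\bar{g}$ is not a witness. Indeed, with $x_0 < x_1$, $\bar{f}_0$ upward and $\bar{f}_1$ downward, the upward cones of $x_0$ together with the downward cones of $x_1$ cover $M \setminus \lbrace x_0,x_1 \rbrace$, so any indec $\bar{g}$ has a support point in some such cone, and that cone can be absorbed into a valid $\bar{h}$. It therefore appears that $\mathrm{RepPoint}(\bar{f}_0,\bar{f}_1)$ can hold with $x_0 \neq x_1$, i.e.\ the ``only if'' direction of the lemma fails as stated. Rather than patching this proof, the issue should be raised with the author: either an additional hypothesis is needed, or $\mathrm{RepPoint}$ (and consequently $\mathrm{EqRepPoint}$) needs to be strengthened.
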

\begin{proof}
First we will prove that if $\bar{f}_0$ and $\bar{f}_1$ are such that $x_0= x_1$ and $\bar{f}_0$ and $\bar{f}_1$ act in different directions then
$$\aut(M) \models \mathrm{RepPoint}(\bar{f}_0,\bar{f}_1)$$
If $\neg \disj(\bar{g},\bar{f}_0)$ or $\neg \disj(\bar{g},\bar{f}_1)$ then we may take $\bar{h} = \bar{f}_0$ or $\bar{h}= \bar{f}_1$, so suppose that $\disj(\bar{g},\bar{f}_0)$ and $\disj(\bar{g},\bar{f}_1)$.

Let
$$\lbrace x_g \rbrace:= \path{\supp(\bar{g}),M) \setminus \supp(\bar{g})}$$
and let $\bar{h}$ be such that
$$( \mathrm{SamePD}(\bar{f_0},\bar{h})  \vee \mathrm{SamePD}(\bar{f_1},\bar{h}) ) $$
and $\path{x_0,x_g} \subset \supp(\bar{h})$.  Clearly this $\bar{h}$ is as required by the formula.

Now we must prove that if
$$\aut(M) \models \mathrm{RepPoint}(\bar{f}_0,\bar{f}_1)$$
then $\bar{f}_0$ and $\bar{f}_1$ are as desired.  If $x_0 \not= x_1$ then there is some $y$ such that none of the following hold
$$
\begin{array}{c c c}
y \in \path{x_0,x_1} & x_0 \in \path{y,x_1} & x_1 \in \path{y,x_0}
\end{array}
$$
Let $\bar{g}$ be such that $\path{y,\lbrace x_0,x_1 \rbrace} \not\subset \supp(\bar{g})$ and
$$y = \path{\supp(\bar{g}),M \setminus \supp(\bar{g})}$$
This $\bar{g}$ witnesses the fact that $\bar{f}_0$ and $\bar{f}_1$ do not satisfy $\mathrm{RepPoint}(\bar{f}_0,\bar{f}_1)$.

Now suppose that $x_0 = x_1$ but
$$\aut(M) \models \mathrm{SamePD}(\bar{f}_0,\bar{f}_1)$$
In this case any $\bar{g}$ whose support is disjoint from that of $\bar{f}_0$ and $\bar{f}_1$ and which fixes $f_0$ will be a witness.
\end{proof}

We now have our formula that defines the domain of interpretation, however there will be a lot of pairs that satisfy $\mathrm{RepPoint}$ but fix the same point.  

\begin{lemma}
Recall that $\mathrm{EqRepPoint}(\bar{f}_0,\bar{f}_1;\bar{g}_0,\bar{g}_1)$ is the formula
$$
\begin{array}{c}
\mathrm{RepPoint}(\bar{f}_0,\bar{f}_1) \wedge \mathrm{RepPoint}(\bar{g}_0,\bar{g}_1) \wedge \\
(\mathrm{SamePD}(\bar{f}_0,\bar{g}_0) \wedge \mathrm{SamePD}(\bar{f}_1,\bar{g}_1)) \vee (\mathrm{SamePD}(\bar{f}_0,\bar{g}_1) \wedge \mathrm{SamePD}(\bar{f}_1,\bar{g}_0))
\end{array}
$$
Let
$$
\begin{array}{c}
x_f:= \path{\supp(\bar{f}_0),M)\setminus \supp(\bar{f}_0)} \\
x_g:= \path{\supp(\bar{g}_0),M) \setminus \supp(\bar{g}_0)}
\end{array}
$$
If
$$\aut(M) \models \mathrm{EqRepPoint}(\bar{f}_0,\bar{f}_1;\bar{g}_0,\bar{g}_1)$$
then $x_f=x_g$.
\end{lemma}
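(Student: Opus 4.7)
The plan is to derive $x_f = x_g$ directly from the two preceding lemmas, so that essentially no new geometric work is required. First I would record the consequence of the preceding $\mathrm{RepPoint}$ lemma: since it shows that $\mathrm{RepPoint}(\bar{f}_0,\bar{f}_1)$ forces $x_0 = x_1$, the branching points satisfy
$$\path{\supp(\bar{f}_0),M\setminus\supp(\bar{f}_0)} = \path{\supp(\bar{f}_1),M\setminus\supp(\bar{f}_1)} = \{x_f\},$$
and similarly $\path{\supp(\bar{g}_0),M\setminus\supp(\bar{g}_0)} = \path{\supp(\bar{g}_1),M\setminus\supp(\bar{g}_1)} = \{x_g\}$. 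Thus the symbols $x_f$ and $x_g$ are unambiguous irrespective of which index $0$ or $1$ one picks.

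Next I would unpack the disjunction inside $\mathrm{EqRepPoint}(\bar{f}_0,\bar{f}_1;\bar{g}_0,\bar{g}_1)$. In the first case both $\mathrm{SamePD}(\bar{f}_0,\bar{g}_0)$ and $\mathrm{SamePD}(\bar{f}_1,\bar{g}_1)$ hold, and Lemma \ref{SamePDBehaves} applied to $\bar{f}_0$ and $\bar{g}_0$ immediately yields $x_f = x_g$. In the second case $\mathrm{SamePD}(\bar{f}_0,\bar{g}_1)$ holds; applying Lemma \ref{SamePDBehaves} to $\bar{f}_0$ and $\bar{g}_1$ gives that the branching point of $\bar{f}_0$ coincides with that of $\bar{g}_1$, and then the unambiguity established in the first step allows us to rewrite both as $x_f$ and $x_g$ respectively, so again $x_f = x_g$.

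I do not anticipate any real obstacle: all of the geometric content has already been absorbed, on the one hand into Lemma \ref{SamePDBehaves} (which is what converts $\mathrm{SamePD}$ into the equality of branching points) and on the other hand into the preceding $\mathrm{RepPoint}$ lemma (which is what allows us to speak of \emph{the} branching point of a $\mathrm{RepPoint}$-pair). The only place care is needed is in the second disjunct, where $\bar{f}_0$ is paired with $\bar{g}_1$ rather than $\bar{g}_0$; one must cross back to $\bar{g}_0$ via the $\mathrm{RepPoint}$ observation, but this is harmless because $\bar{g}_0$ and $\bar{g}_1$ share a common branching point by hypothesis.
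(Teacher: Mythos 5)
Your proposal is correct and takes essentially the same route as the paper: both reduce the claim to the preceding lemmas, using the fact that $\mathrm{RepPoint}$ gives a single well-defined fixed point for each pair, and that $\mathrm{SamePD}$ (via Lemma \ref{SamePDBehaves}) forces the two fixed points to coincide. The paper's own proof is a one-line remark (which in fact appears to contain a sign typo, writing $x_f \neq x_g$ where $x_f = x_g$ is intended); your version spells out the case split on the disjunction, but the underlying argument is the same.
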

\begin{proof}
Clearly $x_f \not= x_g$ if and only if $\mathrm{SamePD}(\bar{f}_i,\bar{g}_j)$ holds for some choice of indices.
\end{proof}

\subsection{Interpreting Betweenness}\label{interpretingbetweenness}

From now on we will adopt the convention that when a lower case letter, such as $g$, appears in one of our formulas, it is actually a pair $(\bar{g}_0,\bar{g}_1)$ that satisfies $\mathrm{RepPoint}$.  We will refer to the point represented by $g$ as $x_g$.

\begin{dfn}
$\mathrm{Temp1PB}(g;h,k)$ is the following formula:
$$\exists l (\mathrm{EqRepPoint}(\bar{g_0},\bar{g_1};\bar{l_0},\bar{l_1}) \wedge \left(
\begin{array}{l}
\neg \disj(\bar{l_0},\bar{h_0}) \wedge \neg \disj(\bar{l_0},\bar{h_1}) \wedge \\
\neg \disj(\bar{l_1},\bar{k_0}) \wedge \neg \disj(\bar{l_1},\bar{k_1})
\end{array} \right)
$$
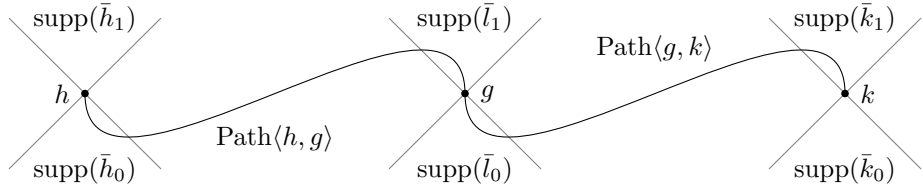
\begin{figure}[h]
\begin{center}
\begin{tikzpicture}[scale=0.1]

\draw[gray] (-60,10) -- (-40,-10);
\draw[gray] (-60,-10) -- (-40,10);

\draw[gray] (-10,10) -- (10,-10);
\draw[gray] (-10,-10) -- (10,10);

\draw[gray] (60,10) -- (40,-10);
\draw[gray] (60,-10) -- (40,10);

\draw (-50,0) .. controls (-50,-20) and (0,20) .. (0,0);
\draw (50,0) .. controls (50,20) and (0,-20) .. (0,0);

\fill (-50,0) circle (0.5);
\fill (0,0) circle (0.5);
\fill (50,0) circle (0.5);

\draw (-50,10) node {$\supp(\bar{h}_1)$};
\draw (-50,-10) node {$\supp(\bar{h}_0)$};
\draw (0,10) node {$\supp(\bar{l}_1)$};
\draw (0,-10) node {$\supp(\bar{l}_0)$};
\draw (50,10) node {$\supp(\bar{k}_1)$};
\draw (50,-10) node {$\supp(\bar{k}_0)$};

\draw (3,0) node {$g$};
\draw (-53,0) node {$h$};
\draw (53,0) node {$k$};

\draw (-25,-6) node {$\path{h,g}$};
\draw (25,6) node {$\path{g,k}$};
\end{tikzpicture}
\end{center}
\caption{What is described by $\mathrm{Temp1PB}(g;h,k)$}
\end{figure}

$\mathrm{Temp2PB}(g;h,k)$ is the formula
$$\phi(g;h,k) \wedge \forall l \, \phi(l;h,k) \rightarrow \left[
\begin{array}{l r}
\mathrm{Temp1PB}(g;l,k) & \wedge \\
\mathrm{Temp1PB}(g;l,h)
\end{array} \right]$$
where $\phi$ is the formula that requires, using $\mathrm{disj}$, the configurations of the supports of $\bar{g}_0$, $\bar{g}_1$, $\bar{h}_0$, $\bar{h}_1$, $\bar{k}_0$ and $\bar{k}_1$ depicted in Figure 29, for all permutations of the indices and that each pair represents different points.
\begin{figure}[h]\label{fig:temp2}
\begin{center}
\begin{tikzpicture}[scale=0.11]

\draw[gray] (-60,10) -- (-40,-10);
\draw[gray] (-60,-10) -- (-40,10);

\draw[gray] (-10,10) -- (10,-10);
\draw[gray] (-10,-10) -- (10,10);

\draw[gray] (60,10) -- (40,-10);
\draw[gray] (60,-10) -- (40,10);

\draw[gray] (10,-20) -- (30,-40);
\draw[gray] (10,-40) -- (30,-20);

\draw (-50,0) .. controls (-50,-20) and (-15,10) .. (-5,10);
\draw (50,0) .. controls (50,-20) and (15,10) .. (5,10);
\draw (0,0) .. controls (0,-20) and (20,0) .. (20,-30);
\draw (-5,10) -- (0,0) -- (5,10);

\fill (-50,0) circle (0.5);
\fill (0,0) circle (0.5);
\fill (20,-30) circle (0.5);
\fill (50,0) circle (0.5);
\fill (-5,10) circle (0.5);
\fill (5,10) circle (0.5);

\draw (-50,10) node {$\supp(\bar{h}_1)$};
\draw (-50,-10) node {$\supp(\bar{h}_0)$};
\draw (0,15) node {$\supp(\bar{g}_1)$};
\draw (-6.5,-11) node {$\supp(\bar{g}_0)$};
\draw (50,10) node {$\supp(\bar{k}_1)$};
\draw (50,-10) node {$\supp(\bar{k}_0)$};
\draw (30,-20) node {$\supp(\bar{l}_1)$};
\draw (20,-40) node {$\supp(\bar{l}_0)$};

\draw (3,0) node {$x_g$};
\draw (-53,0) node {$x_h$};
\draw (53,0) node {$x_k$};
\draw (23,-30) node {$x_l$};

\draw (25,-12.5) node {$\path{x_g,x_l}$};
\draw (-30,5) node {$\path{x_h,x_g}$};
\draw (30,5) node {$\path{x_g,x_k}$};
\draw (-25,-25) node {All suitable $x_l$ occur in $\supp(\bar{g}_0)$};
\end{tikzpicture}
\end{center}
\caption{What is described by $\mathrm{Temp2PB}(g;h,k)$}
\end{figure}
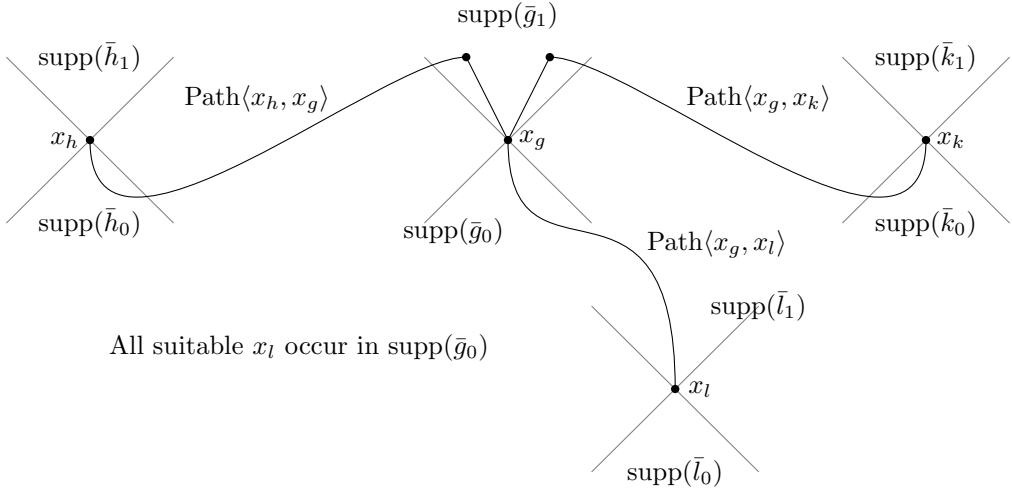

$\mathrm{PathBetween}(g;h,k)$ is the formula
$$
\mathrm{Temp1PB}(g;h,k) \vee \mathrm{Temp2PB}(g;h,k)
$$
\end{dfn}

\begin{lemma}
The previously defined formulas express the following properties of the structure:
\begin{enumerate}
\item $\mathrm{Temp1PB}(g;h,k)$ holds if and only if $\path{x_h,x_k}$ contains a chain of length at least three, of which $x_g$ is one of the middle points.
\item $\mathrm{Temp2PB}(g;h,k)$ holds only if $x_g$ is a local maximum or minimum of $\path{x_h,x_k}$.
\item $\mathrm{PathBetween}(x_g;x_h,x_k)$ holds if and only if $x_g \in \path{x_h,x_k}$.
\end{enumerate}
\end{lemma}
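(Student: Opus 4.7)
My plan is to exploit the structural fact (implicit in Lemmas \ref{noflipping} and \ref{lemma:disjbehaves}) that two indec supports either are disjoint or one is contained in the other, since each is a single ECC with a unique boundary point. The clause $\mathrm{EqRepPoint}(g;l)$ forces $x_l = x_g$ and, via Lemma \ref{SamePDBehaves}, makes $\bar{l}_0$ and $\bar{l}_1$ act in opposite directions from $x_g$. For a single indec $\bar{l}_0$ whose ECC lies in one direction from $x_g$ to meet both $\supp(\bar{h}_0)$ and $\supp(\bar{h}_1)$ simultaneously, it must actually contain both -- which is only possible if its ECC contains $x_h$, i.e.\ if $x_h$ lies strictly in the direction $\bar{l}_0$ acts from $x_g$. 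By symmetry $x_k$ lies strictly in the opposite direction. The two witnesses $x_h$ and $x_k$ sit on opposite sides of $x_g$ on the path $\path{x_h,x_k}$, producing a chain of length at least three with $x_g$ in the middle. The converse is routine: given such a chain, cone-transitivity lets me choose $l$ with $\supp(\bar{l}_0)$ and $\supp(\bar{l}_1)$ the extended cones at $x_g$ pointing towards $x_h$ and $x_k$ respectively, and then $\mathrm{EqRepPoint}(g;l)$ plus the four non-disjointness clauses hold by construction.

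\textbf{Part (2).} This is the main obstacle. The auxiliary formula $\phi(g;h,k)$ encodes, via a conjunction of $\disj$-literals over all permutations of the six tuple-indices, the configuration in Figure 29: three distinct points $x_g, x_h, x_k$ with prescribed directional layout of their associated cones. I would argue the contrapositive: suppose $x_g$ lies on $\path{x_h,x_k}$ but is \emph{not} a local extremum, so the path passes straight through $x_g$ with one neighbour up and one down. I would then build an $l$ from cone-transitivity whose representing point $x_l$ lies in a cone at $x_g$ orthogonal to the through-traffic, so that $l$ satisfies $\phi(l;h,k)$ while $x_g$ fails to be a strict middle of a length-three chain in both $\path{x_l, x_h}$ and $\path{x_l, x_k}$ (by part (1), this violates at least one conjunct of $\mathrm{Temp1PB}(g;l,\cdot)$). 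That $l$ witnesses the failure of the universal clause in $\mathrm{Temp2PB}$, giving the required implication.

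\textbf{Part (3) and the main difficulty.} The forward direction of (3) is immediate from parts (1) and (2), since in both cases $x_g \in \path{x_h,x_k}$. For the converse I would split on whether $x_g$ is a local extremum of $\path{x_h,x_k}$ (including the boundary cases $x_g \in \{x_h, x_k\}$, treated as degenerate extrema): if not, part (1)'s converse delivers $\mathrm{Temp1PB}$; if so, I must produce a witness to $\mathrm{Temp2PB}$, i.e.\ exhibit the base configuration $\phi(g;h,k)$ (straightforward by cone-transitivity once the direction data matches) and then verify the universal clause, namely that any other $l$ satisfying $\phi(l;h,k)$ must have $x_l$ hanging off $x_g$ in a cone for which $x_g$ \emph{is} in the strict middle of $\path{x_l,x_h}$ and $\path{x_l,x_k}$. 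The crux of the argument is this last verification: the fact that $x_g$ is an extremum of $\path{x_h,x_k}$ means the path doubles back at $x_g$, so any branch $l$ hanging off must go out through a fresh cone at $x_g$, and part (1) applies. Lemma \ref{noflipping} is invoked once more to rule out ``flipping'' configurations where $\supp(\bar{l}_i)$ might straddle $x_g$ in pathological ways, and Lemma \ref{lemma:ECC} guarantees that the boundary point appearing in $\phi$ really is unique.
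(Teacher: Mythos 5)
Your Part (1) is correct and matches the paper's argument: the $\neg\disj$ clauses force $\supp(\bar{l}_0)$ to meet both $\supp(\bar{h}_0)$ and $\supp(\bar{h}_1)$, hence to contain their common boundary point $x_h$, and symmetrically $x_k \in \supp(\bar{l}_1)$, which with the opposite-directions clause of $\mathrm{EqRepPoint}$ forces $x_g$ to be a pass-through on $\path{x_h,x_k}$. Part (3) is also a reasonable outline in the same spirit as the paper.

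Part (2), however, has a genuine gap. The claim is that $\mathrm{Temp2PB}(g;h,k)$ holding forces $x_g$ to be a local extremum of $\path{x_h,x_k}$, so a contrapositive argument must dispose of \emph{two} cases: (a) $x_g \in \path{x_h,x_k}$ but the path passes straight through $x_g$; and (b) $x_g \notin \path{x_h,x_k}$. You only address case (a), and even there the construction of an auxiliary $l$ is misplaced: in case (a), $\phi(g;h,k)$ already fails on its own, because $\phi$ requires both $x_h$ and $x_k$ to lie inside a single ECC $\supp(\bar{g}_i)$, which (since $\bar{g}_i$ is indecomposable) occupies cones of $x_g$ in only one vertical direction, whereas a pass-through places $x_h$ and $x_k$ on opposite sides of $x_g$. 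Since $\mathrm{Temp2PB}$ is a conjunction with $\phi(g;h,k)$ as its first conjunct, the universal clause never comes into play, and no witness $l$ is needed. It is also doubtful that the $l$ you describe, representing a point in a side cone at $x_g$, would actually satisfy $\phi(l;h,k)$ under the intersection pattern $\phi$ encodes, so the argument as written does not clearly go through even where applied.

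The case you omit, (b), is exactly the one where a witness is required, and is where the paper does the work: if $x_g \notin \path{x_h,x_k}$, then $[\path{x_g,x_h}\cap\path{x_g,x_k}]\setminus\{x_g\}$ is nonempty, and for any $x_l$ in it we have $x_g \notin \path{x_l,x_h}$, so $\mathrm{Temp1PB}(g;l,h)$ fails, while $l$ (suitably chosen) still satisfies $\phi(l;h,k)$; this $l$ violates the universal clause. Without case (b), Part (2) is not proved, and its converse (needed implicitly for the $\Leftarrow$ direction of Part (3)) also rests on the same missing analysis of which $l$ can satisfy $\phi(l;h,k)$.
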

\begin{proof}
Without loss of generality, we suppose that the situation is the same as depicted in the diagrams above.
\begin{enumerate}
\item Since the formula $\mathrm{Temp1PB}$ insists that $x_h \in \supp(\bar{g}_1)$ and $x_k \in \supp(\bar{g}_0)$, and since any path between something in $\supp(\bar{g}_0)$ and something in $\supp(\bar{g}_1)$ must pass through $x_g$, we conclude that $x_g \in \path{x_h,x_k}$.  Additionally, since $\bar{g}_0$ and $\bar{g}_1$ point in different directions there must be both an immediate successor and an immediate predecessor of $x_g$ lying on $\path{x_h,x_k}$ thus showing that if $\mathrm{Temp1PB}$ holds then the properties it was intended to describe hold.  The other direction is immediate.
\item Since the formula $\mathrm{Temp2PB}$ holds both $x_h$ and $x_k$ are in $\supp(\bar{g}_1)$, if $x_g \in \path{x_h,x_k}$ then it is either a local maximum or a local minimum, as $\supp(\bar{g}_0)$ is an extended connected component originating at $x_g$.  If $x_g \not\in \path{x_h,x_k}$ then
$$\lbrace x_g \rbrace \subsetneq \path{x_g,x_h} \cap \path{x_g,x_k}$$
Any $$x_l \in [ \path{x_g,x_h} \cap \path{x_g,x_k} ]  \setminus \lbrace x_g \rbrace$$ will prevent $\mathrm{Temp2PB}$ from holding.  Again, the other direction is immediate.
\item If $x_g \in \path{x_h,x_k}$ then either $x_g$ is a local maximum or minimum, or $x_g$ lies on a chain of length at least 3, so $\mathrm{Temp1PB}$ and $\mathrm{Temp2PB}$ successfully cover every case.
\end{enumerate}
\end{proof}

\begin{dfn}
$\mathrm{Related}(f,g)$ is the formula
$$\forall h (\mathrm{PathBetween}(h;f,g) \rightarrow \mathrm{Temp1PB}(h;f,g))$$
\end{dfn}

\begin{lemma}
$\mathrm{Related}(f,g)$ holds if and only if $x_f \leq x_g$ or $x_g < x_f$.
\end{lemma}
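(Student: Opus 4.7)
The plan is to reduce everything to the characterizations of $\mathrm{PathBetween}$ and $\mathrm{Temp1PB}$ proved in the previous lemma. Recall that ``$x_f \leq x_g$ or $x_g < x_f$'' is exactly comparability of $x_f$ and $x_g$, while $\mathrm{Related}(f,g)$ says that every $h$ for which $\mathrm{PathBetween}(h;f,g)$ holds also satisfies the stronger $\mathrm{Temp1PB}(h;f,g)$. Before doing either direction I would record a small endpoint observation: if $x_h = x_f$ or $x_h = x_g$ then neither $\mathrm{Temp1PB}(h;f,g)$ nor $\mathrm{Temp2PB}(h;f,g)$ can hold. For $\mathrm{Temp1PB}$ this is by its characterization as ``$x_h$ is a middle point of a chain of length three contained in $\path{x_f,x_g}$''; an endpoint of the path cannot be a middle point of any chain inside it. For $\mathrm{Temp2PB}$ this is built into the $\phi$-clause, which explicitly requires the three pairs to represent pairwise distinct points.

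For the direction ``comparable $\Rightarrow \mathrm{Related}$'', I would assume without loss of generality that $x_f \leq x_g$, so $\path{x_f,x_g}$ is a chain in $M$. Any $h$ with $\mathrm{PathBetween}(h;f,g)$ has $x_h$ lying strictly between $x_f$ and $x_g$, by the endpoint observation. Then $(x_f, x_h, x_g)$ is a chain of length three in $\path{x_f,x_g}$ with $x_h$ as its middle point, so by the characterization of $\mathrm{Temp1PB}$ the formula $\mathrm{Temp1PB}(h;f,g)$ holds. This gives $\mathrm{Related}(f,g)$.

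For the converse I would argue by contrapositive. If $x_f$ and $x_g$ are incomparable, $\path{x_f,x_g}$ is not a chain and must contain a point $x_h$ strictly between $x_f$ and $x_g$ that is a local maximum or minimum of the path, since a path in a CFPO between two incomparable points has to turn at least once. Choosing a representative $h$ of such a turn point, $\mathrm{PathBetween}(h;f,g)$ holds (via $\mathrm{Temp2PB}$). However, the two segments of $\path{x_f,x_g}$ incident to $x_h$ both leave $x_h$ in the same order-direction (both upwards if $x_h$ is a local maximum of the path, both downwards if it is a local minimum), so any chain of $M$ contained in $\path{x_f,x_g}$ and containing $x_h$ is forced to have $x_h$ as one of its ends. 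Hence $\mathrm{Temp1PB}(h;f,g)$ fails, and $h$ witnesses $\neg\mathrm{Related}(f,g)$.

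The only non-routine step is the preliminary endpoint observation together with the elementary CFPO fact that an incomparable pair has a local extremum strictly between them on its connecting path; once both are in hand, the two directions are direct applications of the characterizations of $\mathrm{Temp1PB}$ and $\mathrm{PathBetween}$. I do not expect to need any further manipulation of the underlying $A_5$-tuples.
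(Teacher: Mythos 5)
The paper states this lemma without proof, so your proposal fills a genuine gap rather than being comparable against an existing argument. Your overall strategy---reduce to the three-part characterization of $\mathrm{Temp1PB}$, $\mathrm{Temp2PB}$ and $\mathrm{PathBetween}$, then use the elementary CFPO facts that comparable points are joined by a chain and incomparable points are joined by a path with an interior local extremum---is the natural one, and the argument is correct.

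One point worth highlighting is that your ``endpoint observation'' is doing more work than it might seem: as stated, item~3 of the preceding lemma asserts $\mathrm{PathBetween}(h;f,g)$ holds iff $x_h \in \path{x_f,x_g}$, and $\path{x_f,x_g}$ includes its endpoints. If that biconditional were taken literally, $\mathrm{Related}(f,g)$ would be vacuously false for every pair, since $h$ representing $x_f$ would satisfy $\mathrm{PathBetween}(h;f,g)$ but never $\mathrm{Temp1PB}(h;f,g)$. Your observation that neither $\mathrm{Temp1PB}$ nor $\mathrm{Temp2PB}$ can hold at an endpoint (the former because an endpoint has a unique path-neighbour and hence cannot be the middle of a chain inside the path, the latter because of the pairwise-distinctness requirement in $\phi$) shows the intended reading of item~3 is ``$x_h$ lies \emph{strictly} between $x_f$ and $x_g$,'' and you then use that corrected reading consistently in both directions. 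That refinement is exactly what makes the ``comparable $\Rightarrow \mathrm{Related}$'' direction go through. The remaining steps---that the neighbours of an interior point of a chain give the required length-three subchain, and that at a local extremum both path-neighbours lie on the same order-side so no such subchain exists---are sound; only a small wording slip (a local maximum has both segments leaving it \emph{downwards}, not upwards) needs fixing. I would also note explicitly that the requisite representing pair $h$ for the turn point in the converse direction exists because $ro\uparrow(M), ro\downarrow(M) \geq 5$ and $M$ is cone transitive, but that is the same existence assumption used throughout this section.
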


At this point we have recovered $M$ up to order reversal.  We may, if we wish, recover the full order using a variety of different methods, which I will detail later, but from here we can prove that the class is faithful by recovering the betweenness reduct of the CFPOs in question.

\begin{dfn}
$B(h;f,g)$ is the formula

$$\mathrm{PathBetween}(h;f,g) \wedge \left(
\begin{array}{l r}
\mathrm{Related}(f,g) & \wedge \\
\mathrm{Related}(f,h) & \wedge \\
\mathrm{Related}(g,h)
\end{array} \right)$$
\end{dfn}

\begin{lemma}
$B(h;f,g)$ if and only if $x_h$ is between $x_f$ and $x_g$.
\end{lemma}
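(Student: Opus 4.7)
The plan is to unpack the definition of $B(h;f,g)$ using the two preceding lemmas and then reduce the equivalence to a purely order-theoretic statement about chains. By the lemma on $\mathrm{PathBetween}$ and the lemma on $\mathrm{Related}$, $\aut(M) \models B(h;f,g)$ is equivalent to the conjunction: $x_h \in \path{x_f,x_g}$, and the three points $x_f, x_g, x_h$ are pairwise comparable in $M$. So I must show this conjunction holds if and only if $x_h$ lies in the order-interval between $x_f$ and $x_g$ (i.e.\ either $x_f \leq x_h \leq x_g$ or $x_g \leq x_h \leq x_f$).

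For the backward direction, assume without loss of generality that $x_f \leq x_h \leq x_g$. Then $\{x_f,x_h,x_g\}$ is a chain, so all three pairs are comparable and each $\mathrm{Related}$ clause holds. Moreover, the unique path in $M$ between two comparable points is precisely the order-interval between them (this is a standard feature of CFPOs used throughout Part~1: any element strictly between two comparable points must lie on every path joining them, and conversely a chain is already a path). Hence $x_h \in \path{x_f,x_g}$, giving $\mathrm{PathBetween}(h;f,g)$, so $B(h;f,g)$ holds.

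For the forward direction, suppose $B(h;f,g)$. The three $\mathrm{Related}$ clauses give pairwise comparability, so $\{x_f,x_g,x_h\}$ is a chain; without loss of generality $x_f \leq x_g$. Since $x_h$ is comparable to both endpoints, exactly one of $x_h < x_f$, $x_f \leq x_h \leq x_g$, or $x_g < x_h$ occurs. In the first and third cases, $\path{x_f,x_g}$ is the order-interval $[x_f,x_g]$, which does not contain $x_h$, contradicting $\mathrm{PathBetween}(h;f,g)$. Therefore $x_f \leq x_h \leq x_g$, which is exactly betweenness.

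The only genuinely substantive point is the identification, for comparable points, of the CFPO-path with the order-interval; everything else is a boolean manipulation of the two previous lemmas. This identification is handled by path completeness together with the characterisation of $\path{\cdot,\cdot}$ from Part~1, and is the step on which both directions rest.
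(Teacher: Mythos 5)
The paper states this lemma without proof, treating it as an immediate consequence of the two preceding lemmas; your argument is a correct and complete filling-in, and it takes the only sensible route. You unpack $B(h;f,g)$ via the characterizations of $\mathrm{PathBetween}$ and $\mathrm{Related}$ into ``$x_h \in \path{x_f,x_g}$ together with pairwise comparability of $x_f, x_g, x_h$,'' and then use that, for comparable $a \leq b$ in a CFPO, $\path{a,b}$ is precisely the order-interval $[a,b]$ (which must be a chain, else cycle-freeness fails). You rightly flag this last identification as the one non-trivial step; it is indeed the load-bearing CFPO fact and is established in Part~1. No gaps.
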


\begin{theorem}
$K_{Cone}$ is faithful.
\end{theorem}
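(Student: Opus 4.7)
The plan is to observe that the chain of formulas constructed in this section---$\mathrm{RepPoint}$, $\mathrm{EqRepPoint}$, $\mathrm{PathBetween}$, $\mathrm{Related}$ and $B$---collectively form a first-order interpretation, in the pure group language, of the betweenness reduct of any $M \in K_{Cone}$ inside $\aut(M)$. Faithfulness of the class is precisely the assertion that this interpretation is uniform across $K_{Cone}$ and recovers $M$ up to betweenness isomorphism from the abstract group structure of $\aut(M)$.

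First I would make the interpretation explicit: the domain is the set of pairs $(\bar{f}_0,\bar{f}_1)$ of $60$-tuples satisfying $\mathrm{RepPoint}$, quotiented by $\mathrm{EqRepPoint}$. By the lemmas already established in this section, this quotient is canonically in bijection with $M$ via the map sending $(\bar{f}_0,\bar{f}_1)$ to the unique point $x$ with $\path{\supp(\bar{f}_i),M\setminus\supp(\bar{f}_i)} = \lbrace x \rbrace$ for $i=0,1$. The ternary relation $B$ is invariant under $\mathrm{EqRepPoint}$ in each of its three arguments (the one short verification still required), and under the above bijection agrees with the genuine betweenness relation on $M$ by the preceding lemma characterising $B$.

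Next, given any abstract group isomorphism $\Phi : \aut(M_1) \to \aut(M_2)$ with $M_1, M_2 \in K_{Cone}$, the pure-group-theoretic nature of all defining formulas means that $\Phi$ sends $\mathrm{RepPoint}$-tuples bijectively to $\mathrm{RepPoint}$-tuples, preserves the equivalence $\mathrm{EqRepPoint}$, and preserves $B$. Passing to the quotient produces a bijection $\phi : M_1 \to M_2$ which is an isomorphism of betweenness reducts and which intertwines the actions in the sense that $\Phi(\sigma) = \phi\sigma\phi^{-1}$ for every $\sigma \in \aut(M_1)$; this is the statement that $K_{Cone}$ is faithful.

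The hard part has essentially been discharged by the previous lemmas; what remains is the $\mathrm{EqRepPoint}$-invariance of $B$ in each argument, which reduces to noting that $\mathrm{SamePD}$, $\disj$ and the auxiliary formulas inside $\mathrm{PathBetween}$ and $\mathrm{Related}$ are themselves $\mathrm{EqRepPoint}$-invariant---immediate from their definitions together with Lemma \ref{SamePDBehaves}. This yields reconstruction of the betweenness reduct, which is already enough for faithfulness; the subsequent section on recovering the order from betweenness is then a strengthening rather than a prerequisite.
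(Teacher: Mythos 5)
Your strategy matches the paper's up to the last step: assemble $\mathrm{RepPoint}$, $\mathrm{EqRepPoint}$ and $B$ into a first-order (group-language) interpretation, note that any abstract isomorphism $\Phi\colon\aut(M_1)\to\aut(M_2)$ is carried along the interpretation, and conclude that $\langle M_1,B\rangle\cong\langle M_2,B\rangle$. That much is correct and is exactly what the paper does.

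The gap is your final claim that this ``is already enough for faithfulness.'' It is not; faithfulness of $K_{Cone}$ asserts that $\aut\langle M_1,\leq\rangle\cong\aut\langle M_2,\leq\rangle$ implies $\langle M_1,\leq\rangle\cong\langle M_2,\leq\rangle$, i.e.\ an isomorphism of the \emph{ordered} structures, not merely of their betweenness reducts. A betweenness isomorphism only pins $M_2$ down up to order reversal: $\langle M_1,\leq\rangle\cong\langle M_2,\leq\rangle$ or $\langle M_1,\leq\rangle\cong\langle M_2,\leq^*\rangle$. The paper closes this gap with one more sentence, invoking the standing assumption built into the definition of $K_{Cone}$, namely $ro\!\uparrow(M)\leq ro\!\downarrow(M)$, to rule out the reversed alternative. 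You should add this step: from the betweenness-reduct isomorphism deduce $M_1\cong M_2$ or $M_1\cong M_2^*$, and then use the ramification-order asymmetry required by membership in $K_{Cone}$ to exclude the reversed case. (You are right that the later $L_{\omega_1,\omega}$ and $\lessdot_n$ machinery of Section~4 is not used in this theorem, but that machinery is aimed at \emph{defining} the order inside $\aut(M)$, which is a stronger goal than the abstract isomorphism-of-orders needed here; the one-line ``by assumption'' step still has to appear.)
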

\begin{proof}
Let $\langle M, \leq \rangle, \langle N, \leq \rangle \in K_{Cone}$.  Let $\Phi$ be the first-order interpretation comprising of:
\begin{itemize}
\item $\mathrm{RepPoint}(x)$ as the formula that defines the domain of interpretation;
\item $\mathrm{EqRepPoint}(x,y)$ as the equivalence relation on the domain of interpretation;
\item $B(z;x,y)$ as the betweenness relation.
\end{itemize}
We have established previously that for all $M$
$$\Phi(\aut(\langle M, \leq \rangle)) \cong \langle M, B \rangle$$
Therefore $\aut \langle M, \leq \rangle \cong \aut \langle N, \leq \rangle$ if and only if $\langle M, B \rangle \cong \langle N, B \rangle$.

If $\langle M, B \rangle \cong \langle N, B \rangle$ then $\langle M, \leq \rangle \cong \langle N, \leq \rangle$ or $\langle M, \leq \rangle \cong \langle N, \leq^* \rangle$ (the reverse ordering).  By assumption, this means that $\langle M, \leq \rangle \cong \langle N, \leq \rangle$, thus the class is faithful.
\end{proof}

\section{Reconstructing the Order}

It is impossible to reconstruct the order of all members of $K_{Cone}$ with a first-order interpretation.  Some members of $K_{Cone}$ are isomorphic to their own reverse image, so the automorphism group has no way of knowing which way is `up'.

In those circumstances, it will be necessary to make an artificial choice over which way is `up'.  When reconstructing linear orders in \cite{RubinOrder}, McCleary and Rubin use a parameter pair for this purpose, obtaining a formula $\phi(x_1,x_2;y_1,y_2)$, which interprets
$$x_1 \leq x_2 \Leftrightarrow y_1 \leq y_2$$
This approach is also possible in this context, but not in a first order way.

Since all members of $K_{Cone}$ embed the alternating chain, as the path between $\lbrace x_1, x_2 \rbrace$ and $\lbrace y_1,y_2 \rbrace$ grows, we require longer and longer formulas.  We must use an $L_{\omega_1,\omega}$ formula to recover the order with this technique.

Another approach would be to exploit the fact that we have insisted that
$$ro \downarrow (M) \leq ro \uparrow (M)$$
Ramification order is definable when finite, so if $ ro\downarrow(M) < \lbrace ro\uparrow(M), \aleph_0 \rbrace$, then we can find a first order formula that depends on $ro \downarrow$ that interprets the order.

While first order, I find this far less satisfactory, as it gives lots of different formulas, each of which only work in limited circumstances.  Even together they do not work everywhere.  However, I will present both.

\subsection{$ ro\downarrow (M) < \lbrace ro\uparrow (M), \aleph_0 \rbrace$}

\begin{dfn}
Let $K^n_{Cone}:= \lbrace M \in K_{Cone} \: : \: ro \downarrow (M) \leq n < ro\uparrow (M) \rbrace $.
\end{dfn}

\begin{dfn}
$ x \lessdot_n y $ is the following formula
$$\mathrm{Related}(x,y) \wedge \exists x_0, \ldots, x_n \left(
\begin{array}{c l r}
\bigwedge\limits_{0\leq i \leq n}& \mathrm{Related}(x,x_i) & \wedge \\
\bigwedge\limits_{i\not= j}& \neg\mathrm{Related}(x_i,x_j) & \wedge \\
\bigwedge\limits_{i\not= j}& \mathrm{PathBetween}(x;x_i,x_j) & \wedge \\
& \neg \mathrm{PathBetween}(x;y,x_0)
\end{array}
\right)$$
\end{dfn}

\begin{theorem}
If $M \in K^n_{Cone}$ then $\aut(M) \models x \lessdot_n y$ if and only if $M \models x <_M y$.
\end{theorem}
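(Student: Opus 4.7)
The plan is to translate $x \lessdot_n y$ directly into its geometric content using the previously proved facts that $\mathrm{Related}(u,v)$ holds iff $x_u$ and $x_v$ are comparable in $M$, and $\mathrm{PathBetween}(w;u,v)$ holds iff $x_w \in \path{x_u,x_v}$. The structural ingredient that does the real work is the asymmetry in the definition of $K^n_{Cone}$, namely $ro\downarrow(M) \leq n < ro\uparrow(M)$: every point of $M$ has strictly more than $n$ upward cones but at most $n$ downward cones. This will force the witness set $x_0,\ldots,x_n$ to live above $x$, and the last clause $\neg\mathrm{PathBetween}(x;y,x_0)$ will then pin down the direction of the order.

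For the implication $M\models x<_M y \Rightarrow \aut(M)\models x\lessdot_n y$, I will build explicit witnesses. Let $C$ be the upward cone of $x$ containing $y$, set $x_0:=y$, and use $ro\uparrow(M)\geq n+1$ to choose $x_1,\ldots,x_n$, one element from each of $n$ further pairwise distinct upward cones of $x$. Each $x_i$ is then strictly above $x$, so $\mathrm{Related}(x,x_i)$ holds; for $i\neq j$ the points $x_i,x_j$ lie in distinct upward cones of $x$, so are incomparable with $x\in\path{x_i,x_j}$; and $\path{y,x_0}=\{y\}\subseteq C$ avoids $x$, giving $\neg\mathrm{PathBetween}(x;y,x_0)$.

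For the converse, I will start from witnesses $x_0,\ldots,x_n$ and determine where they must sit. First, no two $x_i$ can lie on opposite sides of $x$: if $x<x_i$ and $x_j<x$ then $x_j<x_i$, contradicting $\neg\mathrm{Related}(x_i,x_j)$. Next, pairwise incomparability together with $x\in\path{x_i,x_j}$ forces the $x_i$ to lie in pairwise distinct cones of $x$, because any path between two points in the same cone of $x$ avoids $x$. If the $x_i$ were all below $x$, we would thereby exhibit $n+1$ distinct downward cones of $x$, contradicting $ro\downarrow(M)\leq n$. So all $x_i$ lie strictly above $x$. Finally, the last clause $x\notin\path{y,x_0}$ excludes $y\leq x$: for if $y\leq x$ then $y$ sits in a downward cone of $x$ (or equals $x$) while $x_0$ sits in an upward cone, so $x\in\path{y,x_0}$. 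Combined with $\mathrm{Related}(x,y)$, this forces $x<_M y$.

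The main obstacle is the step that pairwise incomparability of the $x_i$ together with $x\in\path{x_i,x_j}$ forces them into pairwise distinct cones of $x$, since this is exactly what makes the counting argument against $ro\downarrow(M)\leq n$ bite. Justifying it cleanly requires the precise definition of cone in a path-complete CFPO, together with a careful check of boundary cases (some $x_i$ coinciding with $x$, or $y=x$, or $y$ lying on a chain from $x$ into an upward cone); none of these should change the structure of the argument, but each needs to be verified against the semantics of $\mathrm{Related}$ and $\mathrm{PathBetween}$ recovered in the preceding lemmas.
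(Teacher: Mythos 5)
Your proof is correct and follows essentially the same strategy as the paper's: use the asymmetry $ro\downarrow(M) \leq n < ro\uparrow(M)$ to force the witness antichain $x_0,\ldots,x_n$ into $n+1$ pairwise distinct upward cones of $x$ (pigeonhole on the downward cones in the converse direction, and an explicit choice of one point per upward cone in the forward direction), and then let the final clause $\neg\mathrm{PathBetween}(x;y,x_0)$ decide the direction of the order. The only departures are cosmetic: you take $x_0:=y$ outright where the paper takes $x_0$ merely in the same upward cone as $y$ (which avoids having to trust $\mathrm{PathBetween}$ on a repeated argument), and your ``no two $x_i$ on opposite sides of $x$'' step is a cleaner packaging of the paper's chain of contradictions establishing that every $x_i$ lies strictly above $x$.
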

\begin{proof}
By definition, $x \lessdot_n y \rightarrow \mathrm{Related}(x,y)$, so if $\aut(M) \models x \lessdot_n y$ then $M \models x \leq \geq_M y$.

Each of the $x_i$ are related to $x$, but $\lbrace x_i \: : \: i=0, ..., n \rbrace$ forms an antichain.  Suppose that none of the $x_i$'s lie above $x$.  Since $ro \downarrow (M) \leq n$ this means that at least two of the $x_i$'s, say $x_0$ and $x_1$, are contained in the same downwards cone of $x$.

Therefore $x_0 \vee x_1 < x$, but the connecting set of the path from $x_0$ to $x_1$ must be
$$\lbrace x_0, x_0 \vee x_1, x_1 \rbrace$$
which would imply that $x \not\in \path{x_0,x_1}$, which contradicts the assumption that $\aut(M) \models x \lessdot_n y$.  Thus at least one of the $x_i$'s is above $x$.

Suppose, without loss of generality, that $x_0$ is above $x$.  If any of the other $x_i$'s lie below $x_0$ then they will be related to $x_i$, giving a contradiction.  By the above argument, all of the $x_i$'s lie in different cones.

On the other hand, any $n+1$ element antichain above $x$, where every element is contained in a different cone above $x$ satisfies the all of the properties demanded of it, except
$$(\bigvee\limits_{i \leq n} \neg \mathrm{PathBetween}(x;y,x_i))$$
If $x < y$ then we will be able to choose $x_0$ such that $x_0$ is contained in the same cone as $y$, so any such antichain will satisfy the formula.

If $y<x$ then any path from any of the $x_i$'s to $y$ will pass through $x$, and so the formula cannot be satisfied.
\end{proof}

\subsection{Abandoning First Order Logic}

Throughout this subsection, we assume that $y_1$ and $y_2$ satisfy $\mathrm{Related}$.  All the formulas mentioned will use $y_1$ and $y_2$ as parameters.  We will use $y_1$ and $y_2$ to indicate the direction of the order, so we suppose that $y_1 < y_2$.

\begin{dfn}
$  (x_1 <_0 x_2 \Leftrightarrow y_1 < y_2)$ is the formula that insists that $x_1$, $x_2$, $y_1$ and $y_2$ are all related and using $B(z;x,y)$ insists that they lie in one of the configurations depicted below.
\begin{figure}[h]
\begin{center}
\begin{tikzpicture}[scale=0.065]
\draw[gray] (10,100) -- (10,90);
\draw[gray] (20,100) -- (20,80);
\draw[gray] (30,100) -- (30,65);
\draw[gray] (40,100) -- (40,40);
\draw[gray] (50,100) -- (50,20);
\draw[gray] (60,80) -- (60,65);
\draw[gray] (70,80) -- (70,40);
\draw[gray] (80,80) -- (80,20);
\draw[gray] (90,65) -- (90,52.5);
\draw[gray] (100,65) -- (100,40);
\draw[gray] (110,65) -- (110,20);
\draw[gray] (120,40) -- (120,20);
\draw[gray] (130,30) -- (130,20);

\draw (0,40) -- (140,40);
\draw (0,80) -- (140,80);

\fill (10,100) circle (1);
\fill (9,89) rectangle (11,91);
\fill (20,100) circle (1);
\fill (19,79) rectangle (21,81);
\fill (30,100) circle (1);
\fill (29,64) rectangle (31,66);
\fill (40,100) circle (1);
\fill (39,39) rectangle (41,41);
\fill (50,100) circle (1);
\fill (49,19) rectangle (51,21);
\fill (60,80) circle (1);
\fill (59,64) rectangle (61,66);
\fill (70,80) circle (1);
\fill (69,39) rectangle (71,41);
\fill (80,80) circle (1);
\fill (79,19) rectangle (81,21);
\fill (90,65) circle (1);
\fill (89,51.5) rectangle (91,53.5);
\fill (100,65) circle (1);
\fill (99,39) rectangle (101,41);
\fill (110,65) circle (1);
\fill (109,19) rectangle (111,21);
\fill (120,40) circle (1);
\fill (119,19) rectangle (121,21);
\fill (130,30) circle (1);
\fill (129,19) rectangle (131,21);

\fill (99,89) rectangle (101,91);
\fill (100,100) circle (1);
\draw[anchor=west] (100,100) node {$=x_2$};
\draw[anchor=west] (100,90) node {$=x_1$};

\draw (130,77) node {$y_2$};
\draw (10,37) node {$y_1$};
\end{tikzpicture}
\end{center}
\caption{Defining $ (x_1 <_0 x_2 \Leftrightarrow y_1 < y_2)$}
\end{figure}
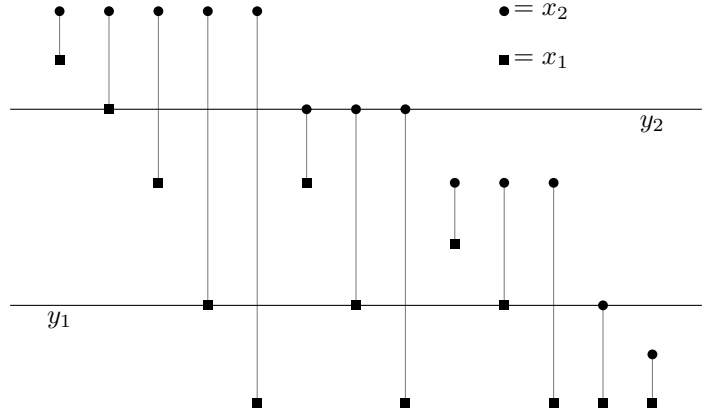
\end{dfn}

\begin{lemma}\label{lemma:0}
If $\aut(M) \models (x_1 <_0 x_2 \Leftrightarrow y_1 < y_2)$ then $M \models x_1 <_M x_2$.
\end{lemma}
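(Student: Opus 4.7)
The plan is to decode the formula $(x_1 <_0 x_2 \Leftrightarrow y_1 < y_2)$ as a finite disjunction over the configurations depicted in Figure 30, and show each disjunct forces $x_1 <_M x_2$ under the assumption $y_1 <_M y_2$. I would begin by invoking the previous lemma that $\mathrm{Related}(a,b)$ captures comparability in $M$: since the formula insists $x_1,x_2,y_1,y_2$ are pairwise related, the four points form a chain in $M$, so every pair is $\leq$-comparable and the linear order they inherit is determined by their pairwise betweenness up to a single global reversal.

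Next I would use the interpretation of $B(z;x,y)$ as genuine path-betweenness (established in the previous subsection) to translate each configuration in Figure 30 into a specific pattern of betweenness relations among $\{x_1,x_2,y_1,y_2\}$. On a chain in a CFPO, such a pattern fixes the relative order of the four elements up to reversal; the parameter pair $y_1,y_2$ with the chosen orientation $y_1 < y_2$ then selects which of the two linear orderings is the correct one. In each case the conclusion $x_1 <_M x_2$ follows immediately from reading off the relative position of $x_1$ and $x_2$ along the chain.

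The structure of the proof is therefore: (i) reduce to the chain $\{x_1,x_2,y_1,y_2\}$ using $\mathrm{Related}$; (ii) for each configuration in the figure, list the $B$-atoms it encodes; (iii) note that those $B$-atoms, together with $y_1 < y_2$, force $x_1 < x_2$ on the chain. Because the configurations in Figure 30 all place $x_1$ and $x_2$ ``locally'' relative to the path from $y_1$ to $y_2$ (i.e.\ at a bounded alternation depth, which is exactly why the formula is indexed by $0$), only finitely many cases need to be checked, and each is a direct diagram-chase on the chain.

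The main obstacle is the bookkeeping rather than anything conceptual: one must check that the configurations enumerated in the figure are \emph{exhaustive} for the cases in which the short first-order formula can succeed, and that each genuinely forces the orientation. The subtlety is that $B$ alone only recovers order up to reversal, so the argument must in every case route through the $y_1 < y_2$ anchor; whenever $x_1$ and $x_2$ sit further from the path $\path{y_1,y_2}$ than the configurations allow, the formula $<_0$ will simply fail to hold, which is consistent with the statement (which only gives one direction) and motivates the $L_{\omega_1,\omega}$ strengthening discussed subsequently.
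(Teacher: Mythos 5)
Your proposal is correct and takes essentially the same approach as the paper, whose entire proof reads ``All possible cases are covered by the definition.'' The more detailed argument you give --- reducing to a four-element chain via $\mathrm{Related}$, translating each configuration of Figure 30 into $B$-atoms, and using the $y_1 < y_2$ anchor to fix which of the two reversal-related orders on that chain is correct --- is precisely what the paper's one-line proof leaves implicit.
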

\begin{proof}
All possible cases are covered by the definition.
\end{proof}

\begin{dfn}
$(x_1 <_1 x_2 \Leftrightarrow y_1 < y_2)$ is the formula
$$
\neg (x_2 <_0 x_1 \Leftrightarrow y_1 < y_2) \wedge \neg (x_1 <_0 x_2 \Leftrightarrow y_1 < y_2) \wedge ( \alpha_1 \vee \alpha_2 \vee \alpha_3 \vee \alpha_4)
$$
where:
$$
\begin{array}{r l}
\alpha_1 := &B (y_2;y_1,x_2) \wedge \mathrm{Related}(x_1,x_2)\\
\alpha_2 := &B(x_2; x_1,y_2)\\
\alpha_3 := &B(y_1;x_1,y_2) \wedge \mathrm{Related}(x_1,x_2)\\
\alpha_4 := &B(x_1; y_1,x_2) \\
\alpha_5 := & \bigwedge\limits_{i \not= j}\mathrm{Related}(x_i,y_j) \wedge \bigwedge\limits_{i = j} \neg \mathrm{Related}(x_i,y_j) \wedge \mathrm{Related}(x_1,x_2)

\end{array}
$$
\end{dfn}

\begin{figure}[h]
\begin{center}
\begin{tikzpicture}[scale=0.09]
\draw (0,30) -- (20,50) -- (20,0) -- (40,20);
\draw (40,40) -- (20,20);
\draw (0,10) -- (20,30);

\fill[anchor=north] (20,0.5) --++ (0.5,0) --++ (0,-1) --++ (-1,0) --++ (0,1) --++ (0.5,0);
\fill[anchor=north] (40,20.5) --++ (0.5,0) --++ (0,-1) --++ (-1,0) --++ (0,1) --++ (0.5,0);

\fill (20,10) circle (0.5);

\fill[white] (20,21) --++ (1,-1) --++ (-1,-1) --++ (-1,1) --++ (1,1);
\fill[anchor=north] (20,20.5) --++ (0.5,0) --++ (0,-1) --++ (-1,0) --++ (0,1) --++ (0.5,0);
\draw (20,21) --++ (1,-1) --++ (-1,-1) --++ (-1,1) --++ (1,1);

\fill[white](40,41) --++ (1,-1) --++ (-1,-1) --++ (-1,1) --++ (1,1);
\fill[anchor=north] (40,40.5) --++ (0.5,0) --++ (0,-1) --++ (-1,0) --++ (0,1) --++ (0.5,0);
\draw (40,41) --++ (1,-1) --++ (-1,-1) --++ (-1,1) --++ (1,1);

\fill (20,30.75)  --++ (0.75,-0.75) --++ (-0.75,-0.75) --++ (-0.75,0.75) --++ (0.75,0.75);
\fill (0,10.75)  --++ (0.75,-0.75) --++ (-0.75,-0.75) --++ (-0.75,0.75) --++ (0.75,0.75);

\fill (20,40) circle (0.5);

\fill[white] (20,50.75)  --++ (0.75,0) --++ (0,-1.5) --++ (-1.5,0) --++ (0,1.5) --++ (0.75,0);
\fill (20,50.75) --++ (0.75,-0.75) --++ (-0.75,-0.75) --++ (-0.75,0.75) --++ (0.75,0.75);
\draw (20,50.75)  --++ (0.75,0) --++ (0,-1.5) --++ (-1.5,0) --++ (0,1.5) --++ (0.75,0);

\fill[white](0,30.75)  --++ (0.75,0) --++ (0,-1.5) --++ (-1.5,0) --++ (0,1.5) --++ (0.75,0);
\fill (0,30.75)--++ (0.75,-0.75) --++ (-0.75,-0.75) --++ (-0.75,0.75) --++ (0.75,0.75);
\draw (0,30.75)  --++ (0.75,0) --++ (0,-1.5) --++ (-1.5,0) --++ (0,1.5) --++ (0.75,0);

\draw[anchor=west] (20,50) node {$x_2$};
\draw[anchor=east] (20,40) node {$y_2$};
\draw[anchor=west] (20,30) node {$x_2$};
\draw[anchor=east] (20,20) node {$x_1$};
\draw[anchor=west] (20,10) node {$y_1$};
\draw[anchor=east] (20,0) node {$x_1$};

\draw[anchor=north] (0,10) node {$x_1$};
\draw[anchor=north] (0,30) node {$x_1$};

\draw[anchor=south] (40,20) node {$x_2$};
\draw[anchor=south] (40,40) node {$x_2$};

\fill (5,-5)--++ (0.75,-0.75) --++ (-0.75,-0.75) --++ (-0.75,0.75) --++ (0.75,0.75);
\draw (5,-5)  --++ (0.75,0) --++ (0,-1.5) --++ (-1.5,0) --++ (0,1.5) --++ (0.75,0);
\draw [anchor=north] (5,-7) node {$\alpha_1$};

\fill (15,-5)  --++ (0.75,-0.75) --++ (-0.75,-0.75) --++ (-0.75,0.75) --++ (0.75,0.75);
\draw [anchor=north] (15,-7) node {$\alpha_2$};

\fill[anchor=north] (35,-5) --++ (0.5,0) --++ (0,-1) --++ (-1,0) --++ (0,1) --++ (0.5,0);
\draw (35,-4.5) --++ (1,-1) --++ (-1,-1) --++ (-1,1) --++ (1,1);
\draw [anchor=north] (35,-7) node {$\alpha_4$};

\fill[anchor=north] (25,-5) --++ (0.5,0) --++ (0,-1) --++ (-1,0) --++ (0,1) --++ (0.5,0);
\draw [anchor=north] (25,-7) node {$\alpha_3$};
\end{tikzpicture}
\end{center}
\caption{Defining $ (x_1 <_1 x_2 \Leftrightarrow y_1 < y_2)$}
\end{figure}

\begin{lemma}\label{lemma:1}
If $\aut(M) \models (x_1 <_1 x_2 \Leftrightarrow y_1 < y_2)$ then $M \models x_1 <_M x_2$.
\end{lemma}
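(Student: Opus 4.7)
The plan is a case analysis on the disjunction $\alpha_1 \vee \alpha_2 \vee \alpha_3 \vee \alpha_4$, leveraging the previously established correctness of the betweenness predicate $B(z;x,y)$ and of the base formula $(x_1 <_0 x_2 \Leftrightarrow y_1 < y_2)$. Under the standing parameter assumption $y_1 < y_2$, the first two conjuncts in the definition of $(x_1 <_1 x_2 \Leftrightarrow y_1 < y_2)$ tell us that Lemma \ref{lemma:0} cannot already resolve the order between $x_1$ and $x_2$; geometrically, this means the four points do not sit in any of the short configurations enumerated in the figure that defines $<_0$.

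I would start with $\alpha_2$, the simplest case, where $B(x_2;x_1,y_2)$ asserts that $x_2$ lies on $\path{x_1,y_2}$. In a CFPO this path is a finite zigzag of chains, and $x_2$ must be either a turning point or an interior point of such a chain. Using $y_1 < y_2$ and the failure of $<_0$, $x_2$ is pinned as an upward turning point of $\path{x_1,y_2}$ strictly above $x_1$, giving $x_1 < x_2$. The case $\alpha_4$, namely $B(x_1;y_1,x_2)$, is symmetric: $x_1$ is pinned as a downward turning point of $\path{y_1,x_2}$, and combined with $y_1 < y_2$ this again yields $x_1 < x_2$. For $\alpha_1$ and $\alpha_3$, the extra conjunct $\mathrm{Related}(x_1,x_2)$ forces $x_1,x_2$ onto a common chain, so only the direction has to be decided; the respective betweenness conditions $B(y_2;y_1,x_2)$ and $B(y_1;x_1,y_2)$, together with $y_1<y_2$, then dictate that $x_2$ sits above $x_1$ on that chain.

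The main obstacle, and where the argument needs the most care, is ruling out the alternative $x_2 < x_1$ in the cases $\alpha_1$ and $\alpha_3$. The cleanest route is by contradiction: if $x_2 < x_1$ held, the betweenness data in $\alpha_i$ together with $y_1 < y_2$ would collapse the configuration of $x_1,x_2,y_1,y_2$ into one of the short zigzags enumerated by the definition of $<_0$, which would contradict the first two conjuncts of our hypothesis. Executing this collapse cleanly is essentially bookkeeping — one must track at each turning point of each relevant path whether traversal proceeds up or down (as calibrated by $y_1 < y_2$) — but no genuinely new geometric idea is required beyond those already used in interpreting $B$ and $<_0$.
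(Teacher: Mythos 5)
Your overall strategy (case analysis on the $\alpha_i$'s, with the reversed ordering ruled out because it would land the four points in a configuration already caught by $<_0$) is the same as the paper's. However, your handling of $\alpha_2$ and $\alpha_4$ misreads what the predicate $B$ supplies. By its definition, $B(x_2;x_1,y_2)$ is not merely $x_2 \in \path{x_1,y_2}$: it also asserts $\mathrm{Related}(x_1,y_2)$, $\mathrm{Related}(x_1,x_2)$ and $\mathrm{Related}(x_2,y_2)$, so the three points are pairwise comparable and $\path{x_1,y_2}$ is a single chain, not a ``finite zigzag of chains,'' and there are no turning points. The correct argument is accordingly much simpler than what you sketch: linear betweenness gives $x_1 <_M x_2 <_M y_2$ or $y_2 <_M x_2 <_M x_1$, and in the latter case $y_1,y_2,x_2,x_1$ form a chain, which is one of the $<_0$ configurations and is excluded by the first two conjuncts of $<_1$. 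Your ``turning point'' reasoning, as written, would not work: if $B$ carried no $\mathrm{Related}$ conjuncts and the path were a genuine zigzag, then $x_2$ could be a local maximum of the path, and $x_1 <_M x_2$ would not follow.

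A second omission: you analyse only $\alpha_1,\dots,\alpha_4$. The displayed disjunction in the definition of $(x_1 <_1 x_2 \Leftrightarrow y_1 < y_2)$ does read $\alpha_1 \vee \alpha_2 \vee \alpha_3 \vee \alpha_4$, but $\alpha_5$ is defined alongside the others and the paper's proof treats it as a live case (using $\mathrm{Related}(x_1,y_2) \wedge \neg\mathrm{Related}(x_1,y_1)$ to get $x_1 <_M y_2$, then excluding $x_2 <_M x_1$ via $\neg\mathrm{Related}(x_2,y_2)$). You should either cover $\alpha_5$ or note explicitly that you are reading the disjunction literally as stated.
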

\begin{proof}
Let $(x_0,x_1) \in M$ be such that $\aut(M) \models \neg  (x_1 <_0 x_2 \Leftrightarrow y_1 < y_2)$.  We will show that when $\aut(M) \models \alpha_i$ then $x_1 <_M x_2$ for each possible $i$.

First, assume that $\aut(M) \models \alpha_1$.  Since $\aut(M) \models B(y_2;y_1,x_2)$ and we are supposing that $y_1 <_M y_2$, we know that $x_2 >_M y_2$.  We also know that $x_1$ cannot be greater than $x_2$, as otherwise $\aut(M) \models (x_1 <_0 x_2 \Leftrightarrow y_1 < y_2)$.  Since we have asserted that $\aut(M) \models \mathrm{Related}(x_1,x_2)$, this means that $x_1 <_M x_2$.

Now we assume that $\aut(M) \models \alpha_2$, so either $x_1 <_M x_2 <_M y_2$ or $y_2 <_M x_2 <_M x_1$, but the latter contradicts our assertion that not both of $x_1$ and $x_2$ are related to both $y_1$ and $y_2$.

Assume that $\aut(M) \models \alpha_3$, so $x_1 <_M y_1 <_M y_2$.  If $x_2 <_M x_1$ then $x_2 <_M y_1,y_2$, contradicting $\aut(M) \models \neg (x_1 <_0 x_2 \Leftrightarrow y_1 < y_2)$.

Assume that $\aut(M) \models \alpha_2$, so either $x_2 <_M x_1 <_M y_1$ or $y_1 <_M x_1 <_M x_2$, but the former contradicts our assertion that not both of $x_1$ and $x_2$ are related to both $y_1$ and $y_2$.

Assume that $\aut(M) \models \alpha_5$, so $\aut(M) \models \mathrm{Related}(x_1,y_2) \wedge \neg\mathrm{Related}(x_1,y_1)$.  This means that $x_1 <_M y_2$.  If $x_2 <_M x_1$ then $x_2 <_M y_2$, but we have asserted that $\aut(M) \models \neg\mathrm{Related}(x_2,y_2)$.
\end{proof}

\begin{dfn}
Let $n \geq 2$.  The formula $(x_1 <_n x_2 \Leftrightarrow y_1 < y_2)$ is defined to be the conjunction of the following four formulas:
$$
\forall z \left(    \bigwedge\limits_{i<n-1}\neg(x_1 <_i z \Leftrightarrow y_1 < y_2) \right) \wedge \forall z \left(  \bigwedge\limits_{i<n-1}  \neg(z <_i x_2 \Leftrightarrow y_1 < y_2) \right)
$$
to ensure that the order is yet to be resolved for either $x_1$ or $x_2$;
$$
\begin{array}{c}
(\exists z \left(  (x_1 <_{n-1} z \Leftrightarrow y_1 < y_2) \right)) \wedge \forall z \left( \neg  (z <_{n-1} x_2 \Leftrightarrow y_1 < y_2) \right)) \\
\vee \\
(\exists z \left(  (z <_{n-1} x_2 \Leftrightarrow y_1 < y_2) \right)) \wedge \forall z \left( \neg  (x_1 <_{n-1} z \Leftrightarrow y_1 < y_2) \right)) 
\end{array}
$$
to ensure that exactly one of $x_1$ and $x_2$ is related by $<_{n-1}$ to something; 
$$
\begin{array}{c}
\forall z \left( \neg  (z <_{n-1} x_2 \Leftrightarrow y_1 < y_2) \right) \rightarrow \\
\exists w \left( (x_1 <_{n-1} w \Leftrightarrow y_1 < y_2)  \wedge (x_1 <_{1} x_2 \Leftrightarrow x_1 < w) \right)
\end{array}
$$
to describe what happens when $x_1$ is in the area where the order is defined, but $x_2$ is not, and;
$$
\begin{array}{c}
\forall z \left(  \neg  (x_1 <_{n-1} z \Leftrightarrow y_1 < y_2) \right) \rightarrow \\
\exists w \left( (w <_{n-1} x_2 \Leftrightarrow y_1 < y_2)  \wedge (x_1 <_{1} x_2 \Leftrightarrow w < x_2) \right)
\end{array}
$$
to describe what happens when $x_2$ is in the area where the order is defined, but $x_1$ is not.
\end{dfn}

\begin{prop}\label{lemma:<n}
If $\aut(M) \models (x_1 <_n x_2 \Leftrightarrow y_1 < y_2)$ then $M \models x_1 <_M x_2$.
\end{prop}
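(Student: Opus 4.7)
The plan is to proceed by induction on $n$, with Lemmas \ref{lemma:0} and \ref{lemma:1} supplying the base cases $n=0,1$. A preliminary observation I would establish first is that each formula $(u <_k v \Leftrightarrow y_1 < y_2)$ mentions $y_1, y_2$ only schematically: if $\aut(M) \models (u <_k v \Leftrightarrow y_1 < y_2)$ forces $u <_M v$ whenever $y_1 <_M y_2$, then the same implication holds when $(y_1, y_2)$ is replaced by any other related pair $(a,b)$ with $a <_M b$. This re-parameterisation principle is the linchpin of the inductive step.

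Assume the statement holds for all $k<n$ with $n\geq 2$, and suppose $\aut(M) \models (x_1 <_n x_2 \Leftrightarrow y_1 < y_2)$. The second conjunct of the definition presents two symmetric cases; without loss of generality I take the first, in which some $z$ satisfies $(x_1 <_{n-1} z \Leftrightarrow y_1 < y_2)$ while no $z$ satisfies $(z <_{n-1} x_2 \Leftrightarrow y_1 < y_2)$. The third conjunct is then triggered and yields a witness $w$ with
$$\aut(M) \models (x_1 <_{n-1} w \Leftrightarrow y_1 < y_2) \wedge (x_1 <_1 x_2 \Leftrightarrow x_1 < w).$$
The induction hypothesis applied to the first clause gives $M \models x_1 <_M w$. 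Re-parameterising the second clause by substituting $(x_1,w)$ for $(y_1,y_2)$, and using that $x_1 <_M w$ plays the role of $y_1 <_M y_2$, Lemma \ref{lemma:1} delivers $M \models x_1 <_M x_2$. The symmetric case, handled by the fourth conjunct and a witness $w$ with $w <_{n-1} x_2$, is identical.

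I expect the main obstacle to be not the short inductive step itself, but justifying the uniformity of the $<_k$ schema under a change of parameter pair, so that Lemma \ref{lemma:1} may legitimately be invoked with $(x_1, w)$ in place of $(y_1, y_2)$. I would therefore open the argument with an explicit remark spelling this out. The first conjunct of the definition, which demands that no shorter $<_i$ already resolves the order for $(x_1, x_2)$, is not used in the direction proved here: it exists to make the $<_k$ resolve pairwise-disjoint configurations, which matters for well-foundedness of the recursion and for the eventual $L_{\omega_1, \omega}$ assembly, but the implication $\aut(M) \models (x_1 <_n x_2 \Leftrightarrow y_1 < y_2) \Rightarrow M \models x_1 <_M x_2$ rests solely on conjuncts (b)--(d), combined with the inductive hypothesis and Lemma \ref{lemma:1}.
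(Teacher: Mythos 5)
Your proof is correct and follows the same inductive structure as the paper's: both split on which symmetric conjunct is realised, extract the witness $w$, apply the induction hypothesis to conclude $x_1 <_M w$, and then feed $(x_1,w)$ back in as a new parameter pair for the $<_1$ clause. The re-parameterisation principle you flag explicitly is indeed the crux; the paper applies it silently under the subsection's standing convention that the parameter pair is always assumed correctly ordered, so making it explicit is a helpful clarification but not a departure from the paper's argument.
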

\begin{proof}
We proceed by induction, starting with the base case $n=2$.   Suppose that
$$\aut(M) \models (x_1 <_2 x_2 \Leftrightarrow y_1 < y_2)$$
If
$$\aut(M) \models \forall z \left( \neg  (z <_{n-1} x_2 \Leftrightarrow y_1 < y_2) \right)$$
then there is a $w \in M$ such that
$\aut(M) \models (x_1 <_{1} w \Leftrightarrow y_1 < y_2)$
and so $x_1 <_M w$.  Therefore $\aut(M) \models (x_1 <_{1} x_2 \Leftrightarrow x_1 < w)$ implies that $x_1 <_{M} x_2$.

If
$\aut(M) \models \forall z \left(  \neg  (x_1 <_{1} z \Leftrightarrow y_1 < y_2) \right)$
then there is a $w \in M$ such that
$$\aut(M) \models (w <_{1} x_2 \Leftrightarrow y_1 < y_2)$$
and so $w <_{M} x_2 $.  Therefore $\aut(M) \models (x_1 <_{1} x_2 \Leftrightarrow w < x_2)$ implies that $x_1 <_{M} x_2$.

We now examine the induction step.  Suppose that if
$\aut(M) \models (x_1 <_{n-1} x_2 \Leftrightarrow y_1 < y_2)$
then $x_1 <_M x_2$ and also suppose that
$\aut(M) \models (x_1 <_{n} x_2 \Leftrightarrow y_1 < y_2)$.

If $\aut(M) \models \forall z \left( \neg  (z <_{n-1} x_2 \Leftrightarrow y_1 < y_2) \right)$
then there is a $w \in M$ such that
$$\aut(M) \models (x_1 <_{n-1} w \Leftrightarrow y_1 < y_2)$$
and so $x_1 <_M w$.  Therefore
$\aut(M) \models (x_1 <_{1} x_2 \Leftrightarrow x_1 < w)$
implies that $x_1 <_{M} x_2$.

If
$\aut(M) \models \forall z \left( \neg  (x_1 <_{n-1} z \Leftrightarrow y_1 < y_2) \right)$
then there is a $w \in M$ such that
$$\aut(M) \models (w <_{n-1} x_2 \Leftrightarrow y_1 < y_2)$$ and so $w <_M x_2$.  Therefore
$\aut(M) \models (x_1 <_{1} x_2 \Leftrightarrow w < x_2)$ implies that $x_1 <_{M} x_2$.
\end{proof}

\begin{dfn}
$(x_1 < x_2 \Leftrightarrow y_1 < y_2)$ is defined to be the $L_{\omega_1,\omega}$-formula:
$$\bigvee\limits_{n < \omega} (x_1 <_n x_2 \Leftrightarrow y_1 < y_2)$$
\end{dfn}

\begin{theorem}
$\aut(M) \models (x_1 < x_2 \Leftrightarrow y_1 < y_2)$ if and only if $M \models x_1 <_M x_2$.
\end{theorem}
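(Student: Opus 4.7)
The plan is to handle the two directions separately. The forward direction is essentially free: the $L_{\omega_1,\omega}$-formula is the countable disjunction $\bigvee_{n<\omega}(x_1 <_n x_2 \Leftrightarrow y_1 < y_2)$, so if $\aut(M)$ models it then some disjunct $(x_1 <_n x_2 \Leftrightarrow y_1 < y_2)$ holds, and Proposition~\ref{lemma:<n} immediately gives $x_1 <_M x_2$.

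For the converse, assume $x_1 <_M x_2$, together with the standing hypothesis $y_1 <_M y_2$. The goal is to produce some $n < \omega$ such that $\aut(M) \models (x_1 <_n x_2 \Leftrightarrow y_1 < y_2)$. I would induct on a complexity measure $n(x_1,x_2)$ that records how far $\{x_1,x_2\}$ sits from the region already resolved by the earlier formulas; concretely, the least $k$ for which some pair involving $x_1$ or $x_2$ is captured at level $k$. This measure is finite because the path $\path{x_1,y_1} \cup \path{x_2,y_2} \cup \path{x_1,x_2}$ is finite in any CFPO.

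The base cases $n = 0$ and $n = 1$ reduce to finite case analyses over the diagrams in Figure~30 and the clauses $\alpha_1,\ldots,\alpha_5$ in Figure~31 respectively. These should together capture exactly the configurations in which the order of $x_1$ and $x_2$ relative to $y_1, y_2$ can be read off directly from the betweenness of the four points; a careful enumeration confirms that $<_0$ and $<_1$ exhaust these local cases. For the inductive step with $n(x_1,x_2) = n \geq 2$, assume without loss of generality that $x_2$ is further from $\{y_1,y_2\}$ than $x_1$ is. Walk along $\path{x_1,x_2}$ away from $\{y_1,y_2\}$ to the boundary of the $(n-1)$-resolved region and take $w$ to be that boundary point. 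Then $x_1 <_M w$ and $n(x_1,w) = n-1$, so by induction $\aut(M) \models (x_1 <_{n-1} w \Leftrightarrow y_1 < y_2)$. Moreover, $w$ is by construction close enough to $x_2$ that the local triple $(x_1,x_2,w)$ is resolved by $<_1$, yielding $\aut(M) \models (x_1 <_1 x_2 \Leftrightarrow x_1 < w)$. Combining these facts witnesses $(x_1 <_n x_2 \Leftrightarrow y_1 < y_2)$, and a symmetric argument handles the case where $x_1$ is further from $\{y_1,y_2\}$ than $x_2$.

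The main obstacle is the inductive step: the complexity measure must be chosen so that (i) it strictly decreases when we pass from $(x_1,x_2)$ to $(x_1,w)$, (ii) the witness $w$ actually satisfies both the outer $<_{n-1}$ clause and the local $<_1$ clause simultaneously, and (iii) the configuration forces \emph{exactly one} of $x_1,x_2$ to lie in the $(n-1)$-resolved region, as required by the middle clause in the definition of $<_n$. The base-case bookkeeping is also delicate, since $<_1$ is defined in terms of the five $\alpha_i$'s and one must check that every local configuration with $x_1 <_M x_2$ matches at least one of them.
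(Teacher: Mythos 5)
Your skeleton is the paper's skeleton: the forward direction is Proposition~\ref{lemma:<n} (any realized disjunct forces $x_1 <_M x_2$), and the converse is an induction on a distance of $\{x_1,x_2\}$ from $\{y_1,y_2\}$, with $<_0$ and $<_1$ handling the ``local'' cases. But the three obstacles you flag at the end are exactly where the content of the proof lives, and your proposal leaves them open; moreover, two of your choices would have to be repaired before the induction even gets off the ground.

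First, your complexity measure is circular: ``the least $k$ for which some pair involving $x_1$ or $x_2$ is captured at level $k$'' presupposes knowledge of which pairs are captured at which level, which is what the induction is supposed to establish. The paper uses the order-free quantity $j = $ length of $\path{\{x_1,x_2\},\{y_1,y_2\}}$, and then proves a strictly stronger induction hypothesis: for $z_1 <_M z_2$ with that path of length $i < j$, one has $\aut(M)\models(z_1 <_i z_2 \Leftrightarrow y_1 < y_2)$ \emph{if and only if} the length is $i$. The ``only if'' half is not decorative -- it is what verifies the first (mutual-exclusivity) clause of $<_j$ for the pair $(x_1,x_2)$, and your stated hypothesis gives no way to obtain it. Second, your inductive step is oriented the wrong way. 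Writing $z_1,\ldots,z_j$ for the connecting set of $\path{\{x_1,x_2\},\{y_1,y_2\}}$ with $z_1$ nearest $\{x_1,x_2\}$, the point $z_1$ lies between $x_1$ and $x_2$ and is comparable to both, so neither of $x_1,x_2$ is ``further'' in the sense your WLOG needs; and the witness $w$ must be taken one step \emph{toward} $\{y_1,y_2\}$ (the paper takes $w = z_2$), not away from it along $\path{x_1,x_2}$. The dichotomy that actually drives the step is whether $x_1 \leq_M z_1 <_M z_2$ or $z_2 <_M z_1 \leq_M x_2$ -- i.e.\ whether the first hop of the alternating chain goes up or down -- and it is this split, together with the strengthened induction hypothesis, that settles your points (ii) and (iii). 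As written, the proposal identifies the right shape of the argument but defers precisely the parts that make the theorem true.
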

\begin{proof}
Suppose $\aut(M) \models (x_1 < x_2 \Leftrightarrow y_1 < y_2)$.  In the first clause of their definition, we ensured that each of the formulas $(x_1 <_n x_2 \Leftrightarrow y_1 < y_2)$ are mutually exclusive.  By Lemmas \ref{lemma:0} and \ref{lemma:1} and Proposition \ref{lemma:<n}, so no matter which $\aut(M)$ realises, we have ensured that $x_1 <_M x_2$.

Suppose $M \models x_1 <_M x_2$.  We examine the length of $\path{\lbrace x_1,x_2 \rbrace , \lbrace y_1,y_2 \rbrace }$, which we shall call $j$.  If $j \leq 2$ then at least one of $x_1$ and $x_2$ is related to at least one of $y_1$ and $y_2$.

Suppose that both $x_1$ and $x_2$ are related to both $y_1$ and $y_2$.  Since $x_1$ and $x_2$ must occur in one of the situations described by $(x_1 <_0 x_2 \Leftrightarrow y_1 < y_2)$.

Now suppose that not both of $x_1$ and $x_2$ are related to both $y_1$ and $y_2$, but at least one is.  This situation is fully described by $(x_1 <_1 x_2 \Leftrightarrow y_1 < y_2)$.

If neither $x_1$ nor $x_2$ are related to either $y_1$ or $y_2$ then $\aut(M)$ realises neither
$$
\begin{array}{c c c}
(x_1 <_0 x_2 \Leftrightarrow y_1 < y_2) & \mathrm{nor} & (x_1 <_1 x_2 \Leftrightarrow y_1 < y_2)
\end{array}
$$
as both of those formulas contain instances of $B(z;x,y)$ that prevent this.

Now suppose that $j \geq 3$.  We also assume that for all $z_1$ and $z_2$ such that $z_1 <_M z_2$, the length of $\path{ \lbrace z_1,z_2 \rbrace , \lbrace y_1, y_2 \rbrace }$ is $i$ for $i<j$ if and only if $$\aut(M) \models (z_1 <_i z_2 \Leftrightarrow y_1 < y_2)$$

Suppose $\path{ \lbrace x_1,x_2 \rbrace , \lbrace y_1, y_2 \rbrace }=j$.  We first wish to show that $(x_0,x_1)$ satisfies the first clause of $(x_1 < x_2 \Leftrightarrow y_1 < y_2)$, vis.
$$
\forall z \left(  \bigwedge\limits_{i<j-1}\neg(x_1 <_i z \Leftrightarrow y_1 < y_2) \right) \wedge \forall z \left(  \bigwedge\limits_{i<n-1}  \neg(z <_i x_2 \Leftrightarrow y_1 < y_2) \right)
$$
If there is a $z$ such that $(x_1 <_i z \Leftrightarrow y_1 < y_2) )$ for some $i < j-1$ then, by the induction hypothesis, the length of $\path{ \lbrace z,x_1 \rbrace , \lbrace y_1, y_2 \rbrace }$ is less than $j-1$.  Since $x_1 <_M x_2$, this means that the length of $\path{ \lbrace x_1,x_2 \rbrace , \lbrace y_1, y_2 \rbrace }$ is less than $j$, contradicting our assumptions.  If there is a $z$ such that $(z <_i x_2 \Leftrightarrow y_1 < y_2) )$ then we reach a similar contradiction.

Let us now examine the second clause:
$$
\begin{array}{c}
\forall z \left( \neg  (z <_{j-1} x_2 \Leftrightarrow y_1 < y_2) \right) \rightarrow \\
\exists w \left( (x_1 <_{j-1} w \Leftrightarrow y_1 < y_2)  \wedge (x_1 <_{1} x_2 \Leftrightarrow x_1 < w) \right)
\end{array}
$$
If there is a $z$ such that $\aut(M) \models (z <_{j-1} x_2 \Leftrightarrow y_1 < y_2)$ then we are done, so suppose that there is no such $z$.  Let $z_1, \ldots, z_j$ be the elements of the connecting set of $\path{ \lbrace x_1,x_2 \rbrace, \lbrace y_1,y_2 \rbrace}$, such that $z_1$ is related to $x_1$ and $x_2$.  

If $z_2 <_M z_1 \leq_M x_2$ then $\aut(M) \models (z_2 <_{j-1} x_2 \Leftrightarrow y_1 < y_2)$, so we may assume that $x_1 \leq_M z_1 <_M z_2$.

$\path{\lbrace z_1,z_2 \rbrace, \lbrace y_1,y_2 \rbrace} = \path{ z_2 , \lbrace y_1,y_2 \rbrace}$ has length $j-1$, so $\aut(M) \models (x_1 <_{j-1} z_2 \Leftrightarrow y_1 < y_2)$.  Additionally, we have deduced that $z_2 \parallel x_2$, and $x_1 \leq z_2,x_2$, so $\aut(M) \models (x_1 <_{1} x_2 \Leftrightarrow x_1 < z_2)$, so $(x_1,x_2)$ satisfies the second clause.

Recall that the third clause we must examine is:
$$
\begin{array}{c}
\forall z \left(  \neg  (x_1 <_{n-1} z \Leftrightarrow y_1 < y_2) \right) \rightarrow \\
\exists w \left( (w <_{n-1} x_2 \Leftrightarrow y_1 < y_2)  \wedge (x_1 <_{1} x_2 \Leftrightarrow w < x_2) \right)
\end{array}
$$
If there is a $z$ such that $\aut(M) \models (x_1 <_{j-1} z \Leftrightarrow y_1 < y_2)$ then we are done, so suppose that there is no such $z$.  Again, let $z_1, \ldots, z_j$ be the elements of the connecting set of $\path{ \lbrace x_1,x_2 \rbrace, \lbrace y_1,y_2 \rbrace}$, such that $z_1$ is related to $x_1$ and $x_2$.  

If $x_1 \leq_M z_1 <_M z_2$ then $\aut(M) \models (x_1 <_{j-1} z_2 \Leftrightarrow y_1 < y_2)$, so we may assume that $z_2 <_M z_1 \leq_M x_2$.

$\path{\lbrace z_1,z_2 \rbrace, \lbrace y_1,y_2 \rbrace} = \path{ z_2 , \lbrace y_1,y_2 \rbrace}$ has length $j-1$, so
$$\aut(M) \models (z_2 <_{j-1} x_2 \Leftrightarrow y_1 < y_2)$$
Additionally, we have deduced that $z_2 \parallel x_1$, and $x_2 \geq z_2,x_1$, so $\aut(M) \models (x_1 <_{1} x_2 \Leftrightarrow z_2 < x_1)$, so $(x_1,x_2)$ satisfies the third clause.

Now suppose that $\aut(M) \models (x_1 <_{j} x_2 \Leftrightarrow y_1 < y_2)$.  Let $z_k, \ldots, z_j$ be the elements of the connecting set of $\path{ \lbrace x_1,x_2 \rbrace, \lbrace y_1,y_2 \rbrace}$, such that $z_k$ is related to $x_1$ and $x_2$.  Since
$$
\aut(M) \models \forall z \left(  \bigwedge\limits_{i<j-1}\neg(x_1 <_i z \Leftrightarrow y_1 < y_2) \right) \wedge \forall z \left(  \bigwedge\limits_{i<n-1}  \neg(z <_i x_2 \Leftrightarrow y_1 < y_2) \right)
$$
the length of $\path{ \lbrace x_1,x_2 \rbrace, \lbrace y_1,y_2 \rbrace}$ has at least $j$ elements, and thus $k \leq 1$.  By the induction hypothesis, either
$$
\begin{array}{c c c}
 \aut(M) \models (z_1 <_{j-1} z_2 \Leftrightarrow y_1 < y_2) & \mathrm{or} & \aut(M) \models (z_2 <_{j-1} z_1 \Leftrightarrow y_1 < y_2) 
\end{array}
$$
so if $k \not= 1$ then $(x_1,x_2)$ cannot possibly satisfy the second and third coordinates.
\end{proof} 

\bibliography{bib}{}

\begin{thebibliography}{1}

\bibitem{RubinOrder}
M.~Rubin and S.~McCleary.
\newblock Locally moving groups and the reconstructon problem for chains and
  circles.
\newblock {\em arXiv:math/0510122}.

\bibitem{ShelahPermutationErrata}
S.~Shelah.
\newblock Errata to: First order theory of permutation groups.
\newblock {\em Israel J. Math.}, v.15:437--441, 1973.

\bibitem{ShelahPermutation}
S.~Shelah.
\newblock First order theory of permutation groups.
\newblock {\em Israel J. Math.}, v.14:149–162, 1973.

\bibitem{ShelahTrussQuotients}
S.~Shelah and J.~Truss.
\newblock On distinguishing quotients of symmetric groups.
\newblock {\em Annals of Pure and Applied Logic}, v.97:47--83, 1999.

\end{thebibliography}
\bibliographystyle{plain}

\end{document}